\numberwithin{equation}{section}  
\newtheorem{punkt}{}[section]
\theoremstyle{plain}
\newtheorem{corollary}[punkt]{Corollary}
\newtheorem{lemma}[punkt]{Lemma}
\newtheorem{theorem}[punkt]{Theorem}
\theoremstyle{definition}
\newtheorem{remark}[punkt]{Remark}
\newtheorem{examples}[punkt]{Examples}
\newtheorem{definition}[punkt]{Definition}
\theoremstyle{plain}
\newtheorem*{corollary*}{Corollary}
\newtheorem*{lemma*}{Lemma}
\newtheorem*{proposition*}{Proposition}
\newtheorem*{theorem*}{Theorem}
\theoremstyle{definition}
\newtheorem*{remark*}{Remark}
\newtheorem*{remarks*}{Remarks}
\newtheorem*{example*}{Example}
\newtheorem*{examples*}{Examples}
\newtheorem*{problem*}{Problem}
\newtheorem*{problems*}{Problems}
\newtheorem*{question*}{Question}
\newtheorem*{questions*}{Questions}
\newtheorem*{definition*}{Definition}
\newtheorem*{conjecture*}{Conjecture}
\newtheorem*{assumption*}{Assumption}
\newtheorem*{assumptions*}{Assumptions}
\newtheorem*{construction*}{Construction}
\def\mycmplx{\mathbb{C}}
\def\mynat{\mathbb{N}}
\def\myreal{\mathbb{R}}
\def\mys{\mathcal{S}}
\def\tr{\qopname\relax{no}{tr}}
\def\re{\qopname\relax{no}{Re}\,}
\def\im{\qopname\relax{no}{Im}\,}
\def\diag{\qopname\relax{no}{diag}}
\def\eg{e.g.\@\xspace}
\def\ie{i.e.\@\xspace}
\def\GL{\qopname\relax{no}{GL}}
\def\Pos{\qopname\relax{no}{Pos}}
\def\fourier{\mathcal{F}}
\def\mellin{\mathcal{M}}
\def\harish{\mathcal{H}}
\def\FSV{f_{\operatorname{SV}}}		
\def\FEV{f_{\operatorname{EV}}}
\newcommand{\eins}{\leavevmode\hbox{\small1\kern-3.8pt\normalsize1}}
\begin{document}

\title[Singular Value and Eigenvalue Statistics]{Exact Relation between Singular Value and Eigenvalue Statistics}
\author{Mario Kieburg$^{1,2,*}$ and Holger K\"osters$^{3,\dagger}$}
\address{$^1$Faculty of Physics \\
University of Duisburg-Essen \\
Lotharstr. 1, D-47048 Duisburg,
Germany}
\address{$^2$Department of Physics\\
Bielefeld University\\
Postfach 100131,
D-33501 Bielefeld, Germany}
\address{$^3$Department of Mathematics\\
Bielefeld University\\
Postfach 100131,
D-33501 Bielefeld, Germany}
\email{$^*$ mkieburg@physik.uni-bielefeld.de}
\email{$^\dagger$ hkoesters@math.uni-bielefeld.de}
\date{\today}

\maketitle

\begin{abstract}
We use classical results from harmonic analysis on matrix spaces to investigate 
the relation between the joint densities of the singular values and 
the eigenvalues for complex random matrices which are bi-unitarily invariant 
(also known as isotropic or unitary rotation invariant). 
We prove that one of these joint densities determines the other one. Moreover
we construct an explicit formula relating both joint densities at finite matrix dimension. 
This relation covers probability densities as well as signed densities.
With the help of this relation we derive general analytical relations 
among the corresponding kernels and biorthogonal functions for a specific class of polynomial ensembles. 
Furthermore we show how to generalize the relation between the eigenvalue and singular value statistics 
to certain situations when the ensemble is deformed by a term which breaks the bi-unitary invariance.
\end{abstract}

\keywords{{\bf Keywords:} bi-unitarily invariant complex random matrix ensembles; singular value densities;
eigenvalue densities; spherical function; spherical transform; determinantal point processes}

\begin{small}
\tableofcontents
\end{small}


\section{Introduction}
\label{sec:Introduction}

It is a long standing problem to directly relate the eigenvalues and the singular values of a general matrix irrespective of whether the matrix is fixed or random. Already the establishment of the Haagerup-Larsen theorem~\cite{HL-theorem,HS:2007} and of the single ring theorem~\cite{SR-theorem-1,SR-theorem-2} for the macroscopic level density of a certain class of random matrices are highly non-trivial.  The reason for this is that, apart from the~equality
\begin{equation}\label{det-equ}
|\det g|^2=|\det z|^2=\det a,
\end{equation}
only inequalities are known for an arbitrary fixed or random matrix  $g\in\mathbb{C}^{n\times n}$, 
e.g. Weyl's inequalities~\cite{Weyl}
\begin{equation}\label{inequalities-Weyl}
\prod_{j=1}^k |z_j|^2\leq \prod_{j=1}^k a_j,\ {\rm for\ all\ }k=1,\ldots,n.
\end{equation}
The eigenvalues $z=\diag(z_1,\ldots,z_n)$ of the matrix $g$ and its squared singular values $a=\diag(a_1,\ldots,a_n)$ (eigenvalues of $gg^*$) are ordered such that $|z_1|\geq\ldots\geq|z_n|$ and $a_1\geq \ldots\geq a_n$. 
Here we identify vectors with diagonal matrices, a convention which will be used throughout the present work
as it is convenient for many formulas involving matrices such as Eq.~\eqref{det-equ}.
Horn~\cite{Horn} has proven that one can invert the statement above, 
\ie if $z=\diag(z_1,\ldots,z_n)$ and $a=\diag(a_1,\ldots,a_n)$ satisfy Eqs.~\eqref{det-equ} and \eqref{inequalities-Weyl}, 
then there exists a matrix $g$ which has the variables $z_j$ as its complex eigenvalues and the variables $a_j$ as its squared singular values.
Furthermore, Weyl's inequalities \eqref{inequalities-Weyl} imply a number of related inequalities
\cite{Weyl} such as
\begin{equation}\label{inequalities-Horn}
\sum_{j=1}^k |z_j|^2\leq \sum_{j=1}^k a_j,\ {\rm for\ all\ }k=1,\ldots,n.
\end{equation}
For $k=n$, Eq.~\eqref{inequalities-Horn} was already proven by Schur \cite{Schur}.
In the special case that the matrix $g$ is normal (\ie $gg^*=g^*g$ with $g^*$ the Hermitian adjoint of $g$),
the inequalities~\eqref{inequalities-Weyl} and \eqref{inequalities-Horn} become equalities 
meaning $a_j=|z_j|^2$ for all $j=1,\ldots,n$. However for general matrices such equalities 
do not necessarily hold.
 
This situation drastically changes when the matrix $g$ is drawn from a random matrix ensemble. The spectral statistics of the induced Ginibre or Laguerre ensemble \cite{Ginibre,Akemann-book,IK:2013} (Gaussian) and the induced Jacobi~\cite{ZS-trunc,Akemann-book,Forrester-book,IK:2013} (truncated unitary) ensemble hint to a relation between the two statistics. Even for the generalization to products of these kinds of random matrices and their inverses simple relations are near at hand, see \eg Refs.~\cite{ARRS:2013,AB:2012,AIK:2013,AKW:2013,ABKN,Forrester:2014,IK:2013,KS:2014,KZ:2014,KKS:2015} and a recent review~\cite{AI:2015}. These products were also studied using free probability theory,
see Ref.~\cite{Burda} for a review on this topic.
However, with the help of free probability, only the global spectral statistics 
in the limit of large matrix dimensions are accessible.
In the present work we consider the statistics at finite matrix dimension,
which also allow investigations of the local spectral statistics.
 
 All ensembles mentioned above have two properties in common. First they are bi-unitarily invariant~\cite{HP-book,GKT:2014} (also known as isotropic~\cite{isotropic} or unitary rotation invariant~\cite{VKG-SUSY}) meaning for the weight $dP(g)$ on the complex square matrices $\mathbb{C}^{n\times n}$ we have
 \begin{equation}\label{bi-unitary}
 dP(g)=dP(k_1gk_2) \quad {\rm for\ all\ }k_1,k_2\in K \,,
 \end{equation}
with $K={\rm U}(n)$ the unitary group. The second property is that the joint probability density of their eigenvalues and their squared singular values have the particular form~\cite{AI:2015}
\begin{align}
\label{eq:EVD}
\FEV(z) \propto |\Delta_n(z)|^2 \prod_{j=1}^{n} \omega(|z_j|^2) \,, \qquad z \in \mathbb{C}^n
\end{align}
and
\begin{align}
\label{eq:SVD}
\FSV(a) \propto \Delta_n(a)  \det\left[(a_k \partial_{a_k})^{j-1}\omega(a_k)\right]_{j,k=1,\hdots,n} \,, \qquad \lambda \in \mathbb{R}_+^n
\end{align}
with the Vandermonde determinant $\Delta_n(a)=\prod_{1\leq b<c\leq n}(a_c-a_b)$ and $\omega: \mathbb{R}_+\rightarrow\mathbb{R}_+$ the corresponding one-point weight. For the induced Ginibre ensemble the one point-weight is $\omega(x)\propto x^\nu e^{-x}$ while for the induced Jacobi ensemble it is $\omega(x)\propto x^\nu (1-x)^\mu\Theta(1-x)$ and $\Theta$ is the Heaviside function. For products of random matrices it was found~\cite{AI:2015} that $\omega$ becomes a Meijer G-function. We call such an ensemble a Meijer G-ensemble which is part of a larger class called polynomial ensembles, see Definition~\ref{def:PE}.

Let us point out a very tricky issue with general ensembles satisfying Eqs.~\eqref{eq:EVD} and~\eqref{eq:SVD}. When choosing the one-point weight $\omega$ to be non-negative, the joint density~\eqref{eq:EVD} of the eigenvalues immediately becomes a joint probability density after proper normalization. Alas, this does not hold for the joint density~\eqref{eq:SVD} of the singular values which might be still a signed density. Therefore one has to be careful with a probabilistic interpretation for an arbitrary positive weight $\omega$. Only for a certain set of weights this interpretation is valid. However this issue will not be discussed in the present work since its main focus relies on the relation between the joint densities, regardless of whether they are signed or not. Indeed there are some applications in physics which also involve signed densities, e.g. QCD at finite chemical potential~\cite{Bloch-Wettig} and three-dimensional QCD with dynamical quarks~\cite{VerZah}.

A direct relation between the two joint densities as in Eqs.~\eqref{eq:EVD} and~\eqref{eq:SVD} is quite appealing and to establish this will be one of our main results in the present work. We aim at an even stronger statement. Namely, assuming that either the joint density of the squared singular values or of the eigenvalues of the random matrix $g$ is given and we know that its matrix weight is bi-unitarily invariant then we can give a closed formula for the other joint density in terms of a transformation of the other one. In particular we derive an explicit expression of a bijective operator $\mathcal{R}$ and its inverse such that
\begin{equation}\label{rel-SEV}
\mathcal{R}\FSV=\FEV\ {\rm and}\ \mathcal{R}^{-1}\FEV=\FSV.
\end{equation} 
We call $\mathcal{R}$ the \emph{SEV-transform} and establish the relation~\eqref{rel-SEV}
in Theorem~\ref{thm:Map} with the help of well-known harmonic analysis results on matrix spaces, 
see Refs.~\cite{Helgason3,FK,JL:SL2R,JL:SL2C,Terras}.

For the derivation of these results, it will be important that the \emph{spherical func\-tions}
associated with the general linear group $G=\GL(n,\mycmplx)$ have an explicit re\-presentation,
see Eq.~\eqref{eq:gelfand-naimark} below.
This will be used instead of the \emph{Harish-Chandra-Itzykson-Zuber integral}~\cite{HC,IZ} 
or related group integrals~\cite{group-int-1,group-int-2,KKS:2015}
which usually play a key role in the derivation of the joint density \eqref{eq:SVD} of the squared singular values.
Moreover, the explicit representation of the spherical functions also leads
to a fairly explicit relation 
between the joint densities of the squared singular values and the eigenvalues
of bi-unitarily invariant random matrices, regardless of whether those densities
are positive or signed, see Theorems \ref{thm:Map} and \ref{thm:pol-SEV}.

Explicit relations between eigenvalues and singular values have also useful applications. For example 
they can be found in QCD at non-zero chemical potential~\cite{KWY} and in wireless telecommunications~\cite{MIMO}.

As a direct application of our results described above, we derive explicit relations between the kernels for the eigenvalue and the squared singular value statistics in the case of an ensemble whose eigenvalues and squared singular values satisfy the joint densities~\eqref{eq:EVD} and \eqref{eq:SVD}, respectively. These ensembles are a particular kind of polynomial ensembles~\cite{KZ:2014,KS:2014} which we call \emph{polynomial ensembles of derivative type}. The limit of large matrix dimensions of those kernels and, thus, universality questions are not addressed. The discussion of the relation between the kernels shall only illustrate how useful the new approach is to get a deeper insight into different spectral statistics and their relations.

Another application illustrating our main result is the spectral statistics of deformations of the ensemble which break the bi-unitary invariance in a specific way.
One of these deformations is in the form $\exp[{\rm Re}\,(\alpha\tr X)]$ recently employed as the coupling in the discussion of a product of two coupled matrices~\cite{AS-coupled-1,AS-coupled-2}. Another deformation has the form $|\det(\alpha\eins_n-g)|^\gamma$ which may open a hole at $\alpha$ in the spectrum if $\gamma$ approaches infinity. These two deformations show that our approach is by far not restricted to bi-unitarily invariant ensembles.

The present work is organized as follows.
In Section~\ref{sec:Preliminaries}, we introduce our notation
and recall the definitions and the basic properties
of the Mellin transform, the Harish transform and the spherical transform. 
Since we often need variations of results from the literature,
\eg adaptions to the set of functions we are considering, 
we provide proofs for several statements.
This will also lead to a self-contained presentation of our work,
and it will give the reader an insight into the main ideas of our results. 
Those main results and explicit formulas are presented in Sections~\ref{sec:MainResults}~and~\ref{sec:Implications}. 
In particular, the mapping~\eqref{rel-SEV} is derived in Theorem~\ref{thm:Map}.
In Section~\ref{sec:MainResults} we also prove that the relation between Eqs.~\eqref{eq:EVD} and \eqref{eq:SVD} holds for polynomial ensembles of derivative type, see Theorem~\ref{thm:pol-SEV}. Additionally we generalize this relation to certain deformations breaking the bi-unitary invariance in Theorem~\ref{thm:deform} and Corollary~\ref{cor:deform}. In Section~\ref{sec:Implications} we discuss the relation between the spectral statistics of the singular values and the eigenvalues of polynomial ensembles of derivative type in detail. In particular we derive simple, explicit relations between the kernels and the polynomials of the singular value and eigenvalue statistics, see Theorem~\ref{thm:rel-pol-SEV}. In Section~\ref{sec:Conclusions} we briefly discuss our results and give an outlook what questions are still open and should be addressed in future investigations.

 Let us again emphasize that we do not consider any limit of large matrix dimensions or other approximations. All of our results are exact and for finite matrix dimension.


\section{Preliminaries}
\label{sec:Preliminaries}

In this section, we introduce our notation and recall a number of known results
from random matrix theory and harmonic analysis on matrix spaces. 
Especially, we define the matrix spaces and the set of densities we make use of 
in Subsection~\ref{subsec:MatrixSpaces}. After this quite technical introduction we recall the Mellin transform in Subsection~\ref{subsec:MellinTransform}, the Harish transform in Subsection~\ref{subsec:HarishTransform} and the spherical transform in Subsection~\ref{subsec:SphericalTransform}. In those subsections we prove certain lemmas which are adaptions of known results to the set of densities we are considering.


\subsection{Matrix Spaces \& Function Sets}\label{subsec:MatrixSpaces}

In the sequel, we consider densities on various matrix spaces. 
We write $x_{jk}$ for the entries of a general matrix $x$
and $x_j$ for the diagonal entries of a diagonal matrix $x$.
The relevant matrix spaces are listed in Table~\ref{table:matrixspaces}.
For the unitary group $K={\rm U}(n)$, we choose the normalized Haar measure
such that $\int_K d^*k = 1$. For the sake of clarity
(and since we deviate from some parts of the cited literature here),
let us emphasize that for the general linear group $G=\GL(n,\mathbb{C})$, the measure $dg$ does \emph{not} denote the integration 
with respect to the Haar measure, which would be $d^* g := |\det g|^{-2n} \, dg$ in our notation.
Similar remarks apply to the groups of positive diagonal matrices $A$ and of complex diagonal entries $Z$.
For these groups, we use the natural isomorphisms $A \cong \myreal_+^n$ and $Z \cong \mycmplx_*^n$, 
with $\mathbb{R}_+$ the positive real axis without the origin and $\mathbb{C}_*=\mathbb{C}\setminus \{0\}$.

\begin{table}
$$
\begin{array}{|l|l|l|}
\hline
\text{Matrix Space}   & \text{Description} & \text{Reference Measure} \\
\hline\hline
\Omega = \Pos(n,\mycmplx)                     & \text{the cone of positive-definite}  & \\
       \quad\cong\GL(n,\mathbb{C})/{\rm U}(n)                & \quad \text{Hermitian matrices} & dy = \prod_{j \leq k} dy_{jk}  \\
  \hline
G = \GL(n,\mycmplx)   & \text{the general linear group}                & dg = \prod_{jk} dg_{jk} \\
\hline
K = {\rm U}(n)              & \text{the unitary group}                       & d^*k = \text{(normalized)} \\
                      &                                                & \qquad\ \text{Haar measure} \\
\hline
A =[\GL(1,\mathbb{C})/{\rm U}(1)]^n & \text{the group of positive definite}          & \\
 \quad\cong \mathbb{R}_{+}^{n}                     & \quad \text{diagonal matrices}   & da = \prod_j da_j \\
 \hline
T                & \text{the group of upper}  & \\
                      & \quad \text{unitriangular matrices}      & dt = \prod_{j < k} dt_{jk} \\
\hline
Z =[\GL(1,\mycmplx)]^n & \text{the group of invertible}          & \\
  \quad\cong \mathbb{C}_{*}^{n}          & \quad \text{complex diagonal matrices} & dz = \prod_j dz_j \\
\hline
\end{array}
$$
\caption{Matrix Spaces and Reference Measures. 
Here $dx$ denotes the Lebesgue measure on $\myreal$ if $x$ is a real variable
and the Lebesgue measure $dx = d\re x \, d\im x$ on $\mycmplx$ if $x$ is a complex variable.}
\label{table:matrixspaces}
\end{table}

By a \emph{density} on a matrix space, we understand a Borel-measurable function
which is Lebesgue integrable with respect to the corresponding reference measure.
Note that, unless otherwise indicated, we do not assume a density to be non-negative.
Sometimes (but not always) we denote those densities which are not non-negative  
by \emph{signed densities} to emphasize this point.
When a density is non-negative, we call it a \emph{positive density},
or a \emph{probability density} if it is additionally normalized.

We will always assume that the (possibly signed) measures under consideration have densities.
Thus, in particular, we exclude point measures like Dirac delta distributions. 
We would expect that our results can be extended to such distributions as well.
However including such distributions would make the presentation even more technical 
and the ideas and the approach we are pursuing less transparent.

Let us recall that given a signed measure on $G = \GL(n,\mycmplx)$ with a density,
the induced measures of the singular values and of the eigenvalues also have densities.
Especially, the singular values and the eigenvalues are pairwise different
apart from a null set. In the subsequent analysis, the exceptional null~sets 
where this is not the case are ignored without further notice. 
Also, the considered matrices are diagonalizable apart from a null set, 
and we do not run into trouble because matrix decompositions become singular. 

Given a signed density $f_G(g)$ on $G = \GL(n,\mycmplx)$, we typically write $\FSV(\lambda)$
for the induced joint density of the squared singular values $a={\rm diag}(a_1,\ldots,a_n)$
and $\FEV(z)$ for the induced joint density of the eigenvalues  $z={\rm diag}(z_1,\ldots,z_n)$.
Unless otherwise indicated, we assume that these densities are \emph{symmetric}
in their arguments, i.e.\@ invariant with respect to permutations. 
The resulting densities are obtained via the squared singular value decomposition $g=k_1\sqrt{a} k_2$ 
with $(k_1,k_2)\in[{\rm U}(n)\times {\rm U}(n)]/[{\rm U}(1)]^n$ 
and the eigendecomposition $g=hzh^{-1}$ with $h\in\GL(n,\mathbb{C})/[\GL(1,\mathbb{C})]^n$,
for almost every $g\in G$.
More precisely, we~have the explicit relations~\cite{Ginibre}
\begin{equation}\label{rel-sv}
\FSV(a)=\left(\frac{\pi^{n^2}}{n!}\prod_{j=0}^{n-1}\frac{1}{(j!)^2}\right)|\Delta_n(a)|^2\int_{[{\rm U}(n)\times {\rm U}(n)]/[{\rm U}(1)]^n} f_G(k_1\sqrt{a} k_2) d^*(k_1,k_2)
\end{equation}
and
\begin{equation}\label{rel-ev}
\FEV(z)= \frac{1}{n!} |\Delta_n(z)|^4\int_{\GL(n,\mathbb{C})/[\GL(1,\mathbb{C})]^n} f_G(hzh^{-1})d^*h
\end{equation}
with $d^*(k_1,k_2)$ and $d^*h$ the measures on the corresponding cosets induced by the Haar measures 
of the groups $K \times K = {\rm U}(n)\times {\rm U}(n)$ and $G=\GL(n,\mathbb{C})$.
The measure on $[{\rm U}(n)\times {\rm U}(n)]/[{\rm U}(1)]^n$ is normalized,
while that on $\GL(n,\mathbb{C})/[\GL(1,\mathbb{C})]^n$ is not, the set being non-compact.
The Vandermonde determinant to the quartic power in Eq.~\eqref{rel-ev} is due to the fact that we have twice as many degrees of freedom in $\GL(n,\mathbb{C})/[\GL(1,\mathbb{C})]^n$ than in ${\rm U}(n)/[{\rm U}(1)]^n$. However it does not imply that the level repulsion is of order four,
since the coset integral diverges as $z$ becomes degenerate.
Indeed one can easily show that in the vicinity of two almost degenerate eigenvalues $z_1\approx z_2$ 
the integral diverges as $1/|z_1-z_2|^2$ when the function $f_G$ is smooth. 
The square reflects the number of non-compact directions 
of the coset $\GL(2,\mathbb{C})/[\GL(1,\mathbb{C})\times\GL(1,\mathbb{C})]$.

Additionally, we will need the following matrix factorizations and their transformations of measures.
Here we prefer to state the changes of measures in terms of the unitary group $K={\rm U}(n)$
instead of the coset space ${\rm U}(n)/[{\rm U}(1)]^n$.

\begin{remark}[Matrix Decompositions]
\label{rem:factorizations} \ 

(i) The \emph{Cholesky decomposition}~\cite[Theorem 2.1.9]{Muirhead} states that every $y \in \Omega$
has a representation $y = t^* a t$, where $a \in A$ and $t \in T$ are unique.
The associated change of measure is
\begin{equation}\label{Chol-dec}
dy = \left(\prod_{j=1}^{n} a_j^{2(n-j)}\right) \, da \, dt.
\end{equation}

(ii) The \emph{spectral decomposition}~\cite[Chapter 3.3]{Hua} states that every $y \in \Omega$
has a representation $y = k^* a k$, where $a \in A$ and $k \in K$. Let us recall that this decomposition becomes unique when replacing $K$ by the coset space ${\rm U}(n)/[{\rm U}(1)]^n$ and ordering the eigenvalues $a$.
The associated change of measure is
\begin{equation}\label{spec-dec}
dy = \left(\frac{1}{n!}\prod_{j=0}^{n-1}\frac{\pi^{j}}{j!}\right) |\Delta_n(a)|^2 \, da \, d^*k,
\end{equation}
Here it is worth mentioning that if $f_G$ is a density on $G$ and $f_\Omega$ is its induced density on $\Omega$
under the mapping $g \mapsto g^* g$, we have the relation
\begin{equation}\label{G-om-rel}
f_\Omega(y) = \left(\prod_{j=0}^{n-1}\frac{\pi^{j+1}}{j!}\right) \int_K f_G(k \sqrt{y}) \, d^*k \,.
\end{equation}
Then the combination of Eq.~\eqref{spec-dec} and Eq.~\eqref{G-om-rel} can be interpreted
as the transformation formula for the squared singular value decomposition $g=k_1 \sqrt{a} k_2$,
cf. Eq.~\eqref{rel-sv}.

(iii) The \emph{Schur decomposition} \cite{Schur}, \cite[Chapter 3.4]{Hua} states that every $g \in G$
has a representation $g = k^* z t k$, where $z \in Z$, $t \in T$, and $k \in K$.
Again, this decomposition becomes unique when replacing $K$ by the coset space 
${\rm U}(n)/[{\rm U}(1)]^n$ and ordering the eigenvalues of $g$.
The associated change of measure is
\begin{equation}\label{schur-dec}
dg = \left(\frac{1}{n!}\prod_{j=0}^{n-1}\frac{\pi^{j}}{j!}\right) |\Delta_n(z)|^2 
	\left( \prod_{j=1}^{n} |z_j|^{2(n-j)} \right) \, dt \, dz \, d^*k,
\end{equation}
with the same normalization constant as in Eq.~\eqref{spec-dec}. Let us mention that we employ a Schur decomposition which is slightly different from the one usually used in the literature. The other one is additive in $Z$ and $T$, i.e. $g = k^* (z+ t'-\eins_n) k$ with $t'\in T$ and $\eins_n$ the $n$-dimensional identity matrix. One can readily show that both transformations are equivalent due to the substitution $t'-\eins_n=z (t-\eins_n)$. Moreover, the Schur decomposition is  practically more convenient than the eigendecomposition in the eigenvalues $z$, see Eq.~\eqref{rel-ev}, despite the fact that with both decompositions we get the same joint density of the eigenvalues. The counterpart of Eq.~\eqref{rel-ev} for the Schur decomposition reads 
\begin{equation}\label{rel-ev-2}
\FEV(z) =\left(\frac{1}{n!}\prod_{j=0}^{n-1}\frac{\pi^{j}}{j!}\right) |\Delta_n(z)|^2\left(\prod_{j=1}^{n} |z_j|^{2(n-j)}\right) \int_{T} \int_K f_G(k^* z t k) \, d^*k \, dt.
\end{equation}
The order of the integrals do not play a role when the density $f_G$ is Lebesgue integrable, i.e. $f_G\in L^1(G)$. Moreover the integral over the non-compact group $T$ typically remains finite even in the limit of a degenerate spectrum.
This is also reflected in the different power of the Vandermonde determinant, which is consistent with
the discussion in the paragraph below Eq.~\eqref{rel-ev}. 
\end{remark}

Before going over to the other ingredients of our analysis, we recall that the group $G$ has a natural action on $\Omega=\Pos(n,\mycmplx)$, namely via $y\mapsto g^{*} y g$.
More precisely, this group action is transitive,
and the isotropy group of the identity matrix is the unitary group $K$. Thus, 
we~have a~bijection 
\begin{align}
\label{eq:bijection}
\begin{array}{ccc}
K \backslash G &\overset{\cong}{\longrightarrow}& \Pos(n,\mycmplx) \,, \\
Kg             & \longmapsto                    & g^*g             \,, \\
\end{array}
\end{align}
where $K \backslash G$ denotes the space of right cosets of $G$ with respect to $K$.

We call a function $f_\Omega$ on $\Omega$ \emph{$K$-invariant}
if $f_\Omega(k^*yk) = f_\Omega(y)$ for all $k \in K$ and $y \in \Omega$.
Also, we call a function $f_G$ on $G$ 

(i) \emph{$K$-left-invariant} if $f_G(kx) = f_G(x)$ for all $k \in K$, $x \in G$,

(ii) \emph{$K$-right-invariant} if $f_G(xk) = f_G(x)$ for all $k \in K$, $x \in G$,

(iii) \emph{bi-unitarily invariant} if $f_G$ is $K$-left-invariant and $K$-right-invariant.

\noindent
Similar terminology will be used for probability measures and for random matrices,
where it means that the ensemble has the respective invariance properties.

To derive the relation between the joint densities of the singular values and eigenvalues,
we need to introduce several sets of functions as well as mappings between them;
see Eq.~\eqref{diagram} for the final diagram which we will show to be commutative. 
This diagram will be at the heart of our approach and of the proof 
for the relation between the joint densities of the singular values and the eigenvalues.

First of all, let us identify $K$-left-invariant functions on $G$
with functions on $K \backslash G$. Then, under the bijection \eqref{eq:bijection},
the $K$-invariant functions on $\Omega$ correspond to
the bi-unitarily invariant functions on $G$. 
Furthermore, using the spectral decomposition $y = k^* a k$
(where $y \in \Omega$, $k \in K$, $a \in A$),
it is clear that any $K$-invariant function on~$\Omega$ 
may be regarded as a symmetric function on~$A$.
Thus, we obtain a correspondence for functions:
\begin{align}
\begin{array}{ccccc}
\begin{array}{c} \text{bi-unitarily invariant}\\ \text{functions} \\ \text{on $G$} \end{array}&
\longleftrightarrow &
\begin{array}{c} \text{$K$-invariant} \\ \text{functions} \\ \text{on $\Omega$} \end{array}&
\longleftrightarrow &
\begin{array}{c} \text{symmetric} \\ \text{functions} \\ \text{on $A$} \end{array}
\end{array}
\label{eq:correspondence-1}
\end{align}
More explicitly, given a symmetric function $F_{\rm SV}$ on $A$,
the corresponding $K$-invariant function $F_\Omega$ on $\Omega$ 
is given by $F_\Omega(y) :=F_{\rm SV}(\lambda(y))$,
where $\lambda(y)$ denotes the diagonal matrix in the spectral decomposition of $y$,
and the corresponding bi-unitarily invariant function $F_G$ on $G$ 
is given by $F_G(g) :=F_\Omega(g^* g)$.

A similar correspondence holds at the level of densities (signed, non-negative or even probability densities)
and their induced densities:
\begin{align}
\begin{array}{ccccc}
L^{1,K}(G) &
\underset{\displaystyle\mathcal{I}_{\Omega}^{-1}}{\overset{\displaystyle\mathcal{I}_{\Omega}}{\text{\scalebox{3}[1]{$\rightleftarrows$}}}} &  L^{1,K}(\Omega) & \underset{\displaystyle\mathcal{I}_{A}^{-1}}{\overset{\displaystyle\mathcal{I}_{A}}{\text{\scalebox{3}[1]{$\rightleftarrows$}}}} & L^{1,{\rm SV}}(A)
\end{array}
\label{eq:correspondence-2}
\end{align}
The set $L^{1,K}(G)$ comprises the bi-unitarily invariant densities on the complex square matrices,
while $ L^{1,K}(\Omega)$ and $L^{1,{\rm SV}}(A)$ are the sets of all invariant densities 
on the positive definite Hermitian matrices and of all symmetric densities 
on the squared singular values (aligned as diagonal matrices in $A$), respectively. 
More precisely, the sets are defined as
\begin{equation}
L^{1,K}(G):=\{f_G\in L^1(G) \,|\, \text{$f_G$ is a bi-unitarily invariant density on }G\}, \label{def-L1G}
 \end{equation}
 and
 \begin{equation} \label{def-L1Om-A}
 L^{1,K}(\Omega):=\mathcal{I}_\Omega L^{1,K}(G)\ {\rm and}\ L^{1,{\rm SV}}(A):=\mathcal{I}_A L^{1,K}(\Omega).
 \end{equation}
Note that $L^{1,{\rm SV}}(A)$ is simply the set of all symmetric $L^1$-functions on $\mathbb{R}_+^n$.
Let us recall that the measures occurring in this paper will have densities 
with respect to the reference measures in Table \ref{table:matrixspaces}. 

The maps appearing in Eqs.~\eqref{eq:correspondence-2} and \eqref{def-L1Om-A} are explicitly given as follows.
The symmetric density $\FSV$ on $A$ corresponding to a given $K$-invariant density $f_\Omega$ on $\Omega$ 
is
\begin{eqnarray}\label{I-A-def}
 \mathcal{I}_A f_\Omega(a)&:=&\left(\frac{1}{n!}\prod_{j=0}^{n-1}\frac{\pi^j}{j!}\right)|\Delta_n(a)|^2f_\Omega(a)=\FSV(a),\\
   \mathcal{I}_A^{-1}\FSV(y)&= &\left(n!\prod_{j=0}^{n-1}\frac{j!}{\pi^j}\right)\frac{\FSV(\lambda(y))}{ |\Delta_n(\lambda(y))|^2}=f_\Omega(y)\nonumber
\end{eqnarray}
 with the linear operator $\mathcal{I}_A:L^{1,K}(\Omega)\rightarrow L^{1,{\rm SV}}(A)$. 
 This operator is bijective because $\lambda(a)=a$ and $f_\Omega(y)=f_\Omega(\lambda(y))$
 for any $f_\Omega\in L^{1,K}(\Omega)$. The normalization constant comes from Eq.~\eqref{spec-dec}. 
 The $K$-invariant density $f_\Omega$ on $\Omega$ corresponding to a bi-unitarily invariant density $f_G(x)$ on $G$ 
is given by
\begin{eqnarray}\label{I-Om-def}
 \mathcal{I}_\Omega f_G(y)&:=&\left(\prod_{j=0}^{n-1}\frac{\pi^{j+1}}{j!}\right)f_G(\sqrt{y})=f_\Omega(y),\\
   \mathcal{I}_{\Omega}^{-1}f_\Omega(g)&= &\left(\prod_{j=0}^{n-1}\frac{j!}{\pi^{j+1}}\right)f_\Omega(g^*g)=f_G(g)\nonumber
\end{eqnarray}
with $\mathcal{I}_\Omega:L^{1,K}(G)\rightarrow L^{1,K}(\Omega)$ and the normalization constant of Eq.~\eqref{G-om-rel}. The bijectivity of the map $\mathcal{I}_\Omega$ follows from $f_G(g)=f_G(\sqrt{g^*g})=f_G(\sqrt{gg^*})$ for all $f_G\in L^{1,K}(G)$ due to bi-unitary invariance.
We underline that the correspondence \eqref{eq:correspondence-2} for densities 
is a bit different from that \eqref{eq:correspondence-1} for functions
due to their different transformation properties under changes of coordinates.

\begin{remark}[Crux of Bi-unitarily Invariant Densities]
\label{rem:specification}\

Summarizing the discussion above, in order to specify 
a $K$-invariant measure on $\Omega=\Pos(n,\mycmplx)$ 
or a bi-unitarily invariant measure on $G=\GL(n,\mycmplx)$,
we will usually specify the symmetric  density $\FSV(a)$ on $A$
and use the correspondence \eqref{eq:correspondence-2}. 
Thus we once again underline that this is a one-to-one correspondence because the normalized Haar measure of the unitary group $K$ and, hence, the induced measure on the coset $[{\rm U}(n)\times{\rm U}(n)]/[{\rm U}(1)]^n$ is unique. Assuming we know the joint density $\FSV$ of the singular values $\lambda$ of $g\in G$ and that the density $f_G$ is bi-unitarily invariant we know the whole measure on $G$ via $f_G(g)dg=\widehat{c}\FSV(\lambda)d\lambda d^*(k_1,k_2)$ with $(k_1,k_2)\in[{\rm U}(n)\times{\rm U}(n)]/[{\rm U}(1)]^n$ and $\widehat{c}$ the normalization constant in front of the integral~\eqref{rel-sv}.
\end{remark}

The bi-unitary invariance of densities on $G$ gives rise to another non-trivial relation between densities on $A$ and on $Z$. This relation becomes useful when studying the eigenvalues of a matrix $g\in G$. To this end, we define a set of functions on $Z$
\begin{equation}\label{def-L1Z}
 L^{1,{\rm EV}}(Z):=\mathcal{T} L^{1,K}(G),
\end{equation}
where
\begin{multline}
\mathcal{T} : L^1(G)\rightarrow L^1(Z), \ \mathcal{T}f_G(z) := \left(\frac{1}{n!}\prod_{j=0}^{n-1}\frac{\pi^{j}}{j!}\right)|\Delta_n(z)|^2\left(\prod_{j=1}^n|z_j|^{2(n-j)}\right) \\
\times\int_{T}\left(\int_{K} f_G(k^* z t k) d^*k\right) dt=\FEV(z) \,,\ \label{T-def}
\end{multline}
Elements in $ L^{1,{\rm EV}}(Z)$ are the induced joint densities of eigenvalues of bi-unitarily invariant matrix ensembles.
Another set of functions on $A$ related to $ L^{1,{\rm EV}}(Z)$ is
\begin{multline}\label{def-L1A-new}
 L^{\mathcal{H}}(A) := \big\{ f : A \to \myreal \,\big|\, \text{$f$ measurable} \\ 
 	\text{ and } |\Delta_n(z)|^2 (\det|z|)^{n-1} f(|z|^2) \in L^{1,{\rm EV}}(Z) \big\},
\end{multline}
Strictly speaking, we do not consider functions, but equivalence classes of functions
(similarly as for the $L^1$-spaces), where two functions $f_1$ and $f_2$ are regarded 
as equi\-valent when $f_1 = f_2$ almost everywhere.
The name $L^{\mathcal{H}}(A)$ is due to the fact that this set will turn out to be the image
of the Harish transform to be introduced in Subsection \ref{subsec:HarishTransform} below.
Furthermore, let us mention that the functions in $L^{\mathcal{H}}(A)$
are closely related to the joint densities of the \emph{radii} of the eigenvalues;
see Remark~\ref{rem:RadiusDistribution} below.

{It is worth emphasizing that, contrary to what Definition \ref{T-def} might suggest,
functions in $L^{1,{\rm EV}}(Z)$, and hence in $L^{\mathcal{H}}(A)$, are symmetric 
in their arguments.

The relation between $L^{\mathcal{H}}(A)$ and $L^{1,{\rm EV}}(Z)$ is given by the bijective map $\mathcal{I}_Z: L^{\mathcal{H}}(A)\rightarrow L^{1,{\rm EV}}(Z) $ with
\begin{eqnarray}\label{I-Z-def}
 \mathcal{I}_Z f_A(z)&:=&\frac{1}{n!\pi^n}|\Delta_n(z)|^2(\det |z|)^{n-1}f_A(|z|^2)=\FEV(z),\quad\\
 \mathcal{I}_Z^{-1}\FEV(a)&=&n!\pi^n \frac{\FEV(\sqrt{a})}{|\Delta_n(\sqrt{a})|^{2} \, (\det a)^{(n-1)/2}}
=f_A(a).\nonumber
\end{eqnarray}
These maps are obviously well-defined.

\begin{remark}[Eigenvalue Radius Distribution]
\label{rem:RadiusDistribution}
The inverse of the map $\mathcal{I}_Z$ has another interesting representation,
viz.
\begin{align}
\label{I-Z-alt}
\mathcal{I}_Z^{-1}\FEV(a)=n!\frac{\pi^n \int_{[{\rm U}(1)]^n}\FEV(\sqrt{a}\Phi)d^*\Phi}{ (\det a)^{(n-1)/2}{\rm Perm}[a_b^{c-1}]_{b,c=1,\ldots,n}} =f_A(a) \,.
\end{align}
Here, ``${\rm Perm}$'' is the permanent, and the diagonal matrix $\Phi={\rm diag}(e^{\imath\varphi_1},\ldots,e^{\imath\varphi_n})\in[{\rm U}(1)]^n$ is distributed via the normalized Haar measure $d^*\Phi=(2\pi)^{-n}d\varphi_1\cdots d\varphi_n$ on $[{\rm U}(1)]^n$. 
To prove Eq.~\eqref{I-Z-alt}, we need two observations. First,
\begin{align}
\label{eq:VandermondeIntegral}
\int_{[{\rm U}(1)]^n} |\Delta_n(\sqrt{a} \Phi)|^2 \, d^*\Phi = {\rm Perm}[a_b^{c-1}]_{b,c=1,\hdots,n} \,,
\end{align}
as is readily verified by expanding the Vandermonde determinant 
$\Delta_n(\sqrt{a} \Phi) = \det[(\sqrt{a_b} e^{\imath\varphi_b})^{c-1}]_{b,c=1,\hdots,n}$ 
with the aid of the Leibniz formula and by analyzing which of the resulting terms remain left
after the integration over $\Phi$.
Second, any density $\FEV \in L^{1,\rm EV}(Z)$ has the form
$\FEV(z) = g(z) |\Delta(z)|^2$, where $g(z)$ satisfies $g(a \Phi) = g(\Phi a) = g(a)$
for any $a \in A$, $\Phi \in [{\rm U}(1)]^n$.
Combining these observations, it follows that
\begin{align}
   \int_{[{\rm U}(1)]^n}\FEV(\sqrt{a}\Phi)d^*\Phi 
&= g(\sqrt{a}) {\rm Perm}[a_b^{c-1}]_{b,c=1,\hdots,n} \nonumber \\
&= \frac{\FEV(\sqrt{a})}{|\Delta_n(\sqrt{a})|^2} {\rm Perm}[a_b^{c-1}]_{b,c=1,\hdots,n} \,,
\end{align}
whence Eq.~\eqref{I-Z-alt}.  

Finally, let us note that if $\FEV$
is the joint density of the eigenvalues, the numerator in Eq.~\eqref{I-Z-alt}
is essentially the joint density of the squared eigenvalue radii.
In this respect, functions in $f^\mathcal{H}(A)$ are related
to the distributions of the eigenvalue radii
of bi-unitarily invariant random matrices.
\end{remark}

It is very important to remark that in general $L^{1,{\rm SV}}(A)\neq L^{\mathcal{H}}(A)$ despite the fact that they are both spaces of functions on $A$.

\begin{lemma}[Integrability Properties of Functions in $L^{\mathcal{H}}(A)$]\label{lem:sets}\

\noindent{}Let $f_A\in L^{\mathcal{H}}(A)$, then we have
\begin{equation}\label{finiteness}
\int_A \Bigl|{\rm Perm}[a_b^{\varrho'_c-1}]_{b,c=1,\ldots,n}f_A(a)\Bigr|da<\infty
\end{equation}
with $\varrho'=\varrho+n\eins_n$ where
\begin{align}
\label{eq:varrho}
\varrho := \diag(\varrho_1,\ldots,\varrho_n), \ {\rm with}\ \varrho_j:=\frac{2j-n-1}{2},\ j=1,\ldots,n.
\end{align}
\end{lemma}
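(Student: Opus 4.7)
The plan is to recognise that ${\rm Perm}[a_b^{\varrho'_c-1}]_{b,c=1,\ldots,n}$ is a simple row-rescaling of the basic permanent ${\rm Perm}[a_b^{c-1}]_{b,c=1,\ldots,n}$, and then, via the integral identity of Remark~\ref{rem:RadiusDistribution}, to identify the left-hand side of \eqref{finiteness} with a polar-coordinate rewriting of $\int_Z |\Delta_n(z)|^2(\det|z|)^{n-1}|f_A(|z|^2)|\,dz$, whose finiteness is precisely the condition defining $L^{\mathcal{H}}(A)$.

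First I would carry out the algebraic observation $\varrho'_c-1 = (c-1)+(n-1)/2$, so that $a_b^{\varrho'_c-1} = a_b^{(n-1)/2}\,a_b^{c-1}$. Since the prefactor $a_b^{(n-1)/2}$ depends only on the row index, it pulls out of every term in the Leibniz-type expansion of the permanent, yielding
\begin{equation*}
{\rm Perm}[a_b^{\varrho'_c-1}]_{b,c=1,\ldots,n} \;=\; (\det a)^{(n-1)/2}\,{\rm Perm}[a_b^{c-1}]_{b,c=1,\ldots,n}\,.
\end{equation*}
Both factors on the right-hand side are non-negative on $A=\myreal_+^n$, so the absolute value in \eqref{finiteness} effectively applies only to $f_A$.

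Next I would invoke Eq.~\eqref{eq:VandermondeIntegral} to rewrite ${\rm Perm}[a_b^{c-1}]$ as $\int_{[{\rm U}(1)]^n}|\Delta_n(\sqrt{a}\Phi)|^2\,d^*\Phi$ and pass to polar coordinates $z_j=\sqrt{a_j}\,e^{\imath\varphi_j}$ on $Z$. The change of measure gives $dz=\pi^n\,da\,d^*\Phi$ (combining $dz_j=\tfrac{1}{2}\,da_j\,d\varphi_j$ with $d^*\Phi=(2\pi)^{-n}\,d\varphi_1\cdots d\varphi_n$), together with $(\det|z|)^{n-1}=(\det a)^{(n-1)/2}$ and $|\Delta_n(z)|^2=|\Delta_n(\sqrt{a}\Phi)|^2$. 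Tonelli's theorem, which is applicable because the integrand is non-negative, then furnishes
\begin{equation*}
\pi^n\int_A {\rm Perm}[a_b^{\varrho'_c-1}]_{b,c=1,\ldots,n}\,|f_A(a)|\,da \;=\; \int_Z |\Delta_n(z)|^2(\det|z|)^{n-1}|f_A(|z|^2)|\,dz\,.
\end{equation*}
The right-hand side is finite by the very definition~\eqref{def-L1A-new} of $L^{\mathcal{H}}(A)$, since $\mathcal{I}_Z f_A\in L^{1,\rm EV}(Z)\subset L^1(Z)$, and this delivers \eqref{finiteness}.

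I do not anticipate a substantial obstacle here: the argument is essentially a change of coordinates between the $z$-variables and the $(a,\varphi)$-variables, with the only algebraic inputs being the elementary factorisation of the permanent above and the permanent-Vandermonde identity already established in Remark~\ref{rem:RadiusDistribution}. The only small items deserving care are tracking the factor $\pi^n$ in the Jacobian correctly and using the hypothesis in its full $L^1$-integrability form rather than as a formal identity.
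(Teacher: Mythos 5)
Your proof is correct, and it follows essentially the same route as the paper: both arguments hinge on the row factorization ${\rm Perm}[a_b^{\varrho'_c-1}]=(\det a)^{(n-1)/2}\,{\rm Perm}[a_b^{c-1}]$ and on the permanent--Vandermonde identity \eqref{eq:VandermondeIntegral} from Remark~\ref{rem:RadiusDistribution}, before invoking integrability of $\mathcal{I}_Z f_A$ over $Z$. The paper packages the last step by substituting the representation \eqref{I-Z-alt} for $f_A$ and appealing to an auxiliary $f_G\in L^{1,K}(G)$ with $\mathcal{T}f_G=\mathcal{I}_Z f_A$, whereas you change variables to polar coordinates directly and read off the finiteness from the definition \eqref{def-L1A-new} via $L^{1,{\rm EV}}(Z)\subset L^1(Z)$; this is a minor, if anything slightly cleaner, reorganization rather than a genuinely different argument.
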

The diagonal matrix $\varrho$ is essentially the sum of all positive roots of $K={\rm U}(n)$, see Ref.~\cite[Chapter IV.4]{Helgason3}, and will occur frequently in the next subsections, too.

\begin{proof}
 Let $f_A\in L^{\mathcal{H}}(A)$ and $\FEV:=\mathcal{I}_Z^{-1}f_A\in L^{1,{\rm EV}}(Z)$,
 and let $f_G\in L^{1,K}(G)$ be a~density with $\mathcal{T}f_G=\FEV$. 
 Consider the integral \eqref{finiteness}.
 After replacing $f_A(a)$ with Eq.~\eqref{I-Z-alt}, 
 the permanent in \eqref{finiteness} cancels with the determinant and the permanent 
 in Eq.~\eqref{I-Z-alt}, and we are left with the numerator in Eq.~\eqref{I-Z-alt}.
 But the latter must be integrable, being the density of the squared eigenvalue radii
 induced by the density $f_G \in L^{1,K}(G)$.
\end{proof}

Before closing this subsection, let us introduce generalizations of the operators $\mathcal{I}_\Omega$ and  $\mathcal{I}_A$, see Eqs.~\eqref{I-A-def} and \eqref{I-Om-def}, to densities which are not necessarily bi-unitarily invariant.  Those operators shall act on the sets $L^1(G)$ and $L^1(\Omega)$ as
\begin{eqnarray}
\mathcal{K}_{\Omega}\hspace*{-0.2cm}&:&\hspace*{-0.2cm}L^1(G)\rightarrow L^1(\Omega),\ \mathcal{K}_{\Omega}f_G(y):=\left(\prod_{j=0}^{n-1}\frac{\pi^{j+1}}{j!}\right)\int_K f_G(k\sqrt{y})dk,\label{K-Om-def}\\
\mathcal{K}_A\hspace*{-0.2cm}&:&\hspace*{-0.2cm}L^1(\Omega)\rightarrow L^1(A),\ \mathcal{K}_Af_\Omega(a):=\left(\frac{1}{n!}\prod_{j=0}^{n-1}\frac{\pi^{j}}{j!}\right) |\Delta_n(a)|^2\int_K f_\Omega(k^*ak)d^*k.\qquad\ \label{K-A-def}
\end{eqnarray}
Indeed, the functions in the ranges of these maps are obviously Lebesgue integrable by construction;
see Eqs.~\eqref{spec-dec} and \eqref{G-om-rel}.
With these definitions and Eq.~\eqref{T-def}, the relation between densities on $G$ and those on $A$ and $Z$, see Eqs.~\eqref{rel-sv} and \eqref{rel-ev-2}, compactly reads
\begin{equation}\label{rel-comp}
 \FSV=\mathcal{K}_A\mathcal{K}_{\Omega}f_G\ {\rm and}\ \FEV=\mathcal{T}f_G.
\end{equation}
 In the case that $f_G$ is bi-unitarily invariant we have the following simplification for the density of the squared singular values,
\begin{equation}\label{rel-comp-simple}
 \FSV=\mathcal{I}_A\mathcal{I}_{\Omega}f_G.
\end{equation}
Also the relation for $\FEV$ simplifies since the integral over the unitary group $K$ drops out. Let us emphasize 
that $\mathcal{I}_A\mathcal{I}_{\Omega}$ is invertible while $\mathcal{K}_A\mathcal{K}_{\Omega}$ is not;
the same is true for the operator $\mathcal{T}$ on the set $L^{1,K}(G)$ and on the set $L^1(G)$, respectively.
We need these generalizations for the applications to densities $f_G$ which break the bi-unitary invariance, 
as discussed in Subsection~\ref{subsec:break}.

Finally, for the discussion of the Mellin transform, see Subsection \ref{subsec:MellinTransform},
as well as for the polynomial ensembles of derivative type, see Definition~\ref{def:PE}, 
we also need a~particular subset of $L^1(\mathbb{R}_+)$, namely
\begin{eqnarray}
 L^{1,k}_{\mathbb{I}}(\mathbb{R}_+)&\mskip-9mu:=\mskip-9mu&\bigg\{f\in L^{1}(\mathbb{R}_+) \,\Big|\, 
 \text{$f$ is $k$-times differentiable} \label{func-def}\\
 && \text{and for all } \kappa \in\mathbb{I} \text{ and }j=0,\ldots k:
 \int_0^\infty \bigg|y^{\kappa-1}\Big({-}y\frac{\partial}{\partial y}\Big)^jf(y)\bigg|dy<\infty\bigg\}. \nonumber
\end{eqnarray}
Here, $\mathbb{I} \subset \mathbb{R}$ is an interval containing the number $1$,
so that all of the functions will be Lebesgue integrable. 
Whether this interval is open or not does not matter.
Furthermore, ``$k$-times differentiable'' means ``$(k-1)$-times continuously differentiable
and with a $(k-1)$th derivative which is absolutely continuous'',
and hence differentiable almost everywhere, see \eg \cite[Chapter~8]{Rudin} for details.
Finally, let $L^{1}_{\mathbb{I}} := L^{1,0}_{\mathbb{I}}$.


\subsection{Mellin Transform}\label{subsec:MellinTransform}

For a measurable function $f$ defined on $\mathbb{R}_+$, the \emph{Mellin transform} is defined by
\begin{equation}\label{M-def}
\mellin f(s) := \int_0^\infty f(y) \, y^{s-1} \, dy.
\end{equation}
It is only defined for those $s \in \mycmplx$ such that the integral exists (in the Lebesgue sense).
In particular, if $f \in L^1(\myreal_+)$, the Mellin transform is defined at least
on the line $1 + \i\myreal$.

\begin{remark}[Notation for Functionals]\label{rem:functional-notation}\ 

In the following we employ different but equivalent notations for functionals. For linear functionals as the Mellin transform we have the following equivalent notations
\begin{equation}\label{not-func}
\mellin f(s)=\mathcal{M}([f];s)=\mathcal{M}([f(y)];s).
\end{equation}
The last notation will especially be used when the argument $y$ of the function $f$ needs to be indicated.
Similar notations of last two kinds will be also employed for non-linear functionals like the normalization constants and the kernels of determinantal point processes, see Section~\ref{sec:Implications}.
\end{remark}


Furthermore, let us state a particular version of the Mellin inversion formula 
which holds for general integrable functions, see also Theorem 1.32 in \cite{Titchmarsh}.
Since we will use similar arguments for the spherical inversion formula later,
we include an outline of the proof.

\begin{lemma}[Mellin Inversion on $L^{1}_{\mathbb{I}}(\mathbb{R}_+)$]\label{lem:Mel-Inv}\

\noindent{}The Mellin transformation $\mathcal{M}:L^{1}_{\mathbb{I}}(\mathbb{R}_+)\rightarrow \mathcal{M}L^{1}_{\mathbb{I}}(\mathbb{R}_+)$ on the set \eqref{func-def} is bijective with the \emph{Mellin inversion formula}
\begin{align}
\label{eq:MellinInversion}
\mellin^{-1}([\mellin f];x) := \lim_{\epsilon\to0} \int_{-\infty}^{\infty} 
	\frac{\pi^2\cos \epsilon s}{\pi^2-4\epsilon^2 s ^2}\mellin f(d+\imath s) \, x^{-d-\imath s} \, \frac{ds}{2\pi} = f(x)
\end{align}
for any $d\in\mathbb{I}\subset\mathbb{R}$, $f\in L^{1}_{\mathbb{I}}(\mathbb{R}_+)$,
and almost all $x \in \mathbb{R}_+$. 
\end{lemma}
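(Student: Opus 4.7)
The plan is to reduce the Mellin inversion identity to a standard Fourier approximate-identity argument by the logarithmic substitution $y = e^{-t}$, $x = e^{-u}$. Setting $g(t) := f(e^{-t})\,e^{-dt}$, the Mellin transform becomes a Fourier transform,
$$\mellin f(d + \imath s) = \int_{-\infty}^{\infty} g(t)\,e^{-\imath s t}\,dt =: \hat g(s),$$
and the hypothesis $d \in \mathbb{I}$ together with $f \in L^{1}_{\mathbb{I}}(\mathbb{R}_+)$ ensures $\int_{\mathbb{R}} |g(t)|\,dt = \int_0^\infty |f(y)|\,y^{d-1}\,dy < \infty$, so $g \in L^1(\mathbb{R})$. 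Under the same substitution the right-hand side of the proposed inversion formula becomes
$$e^{du}\int_{-\infty}^{\infty} m_\epsilon(s)\,\hat g(s)\,e^{\imath s u}\,\frac{ds}{2\pi}, \qquad m_\epsilon(s) := \frac{\pi^2\cos(\epsilon s)}{\pi^2 - 4\epsilon^2 s^2},$$
while the desired value $f(x)$ corresponds exactly to $e^{du}\,g(u)$.

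The crux of the argument is the identification of $m_\epsilon$ as the Fourier transform of a good approximate identity. A direct calculation (expand $\cos$ by Euler's formula and evaluate the two resulting elementary integrals) produces
$$m_\epsilon(s) = \int_{\mathbb{R}} K_\epsilon(w)\,e^{-\imath s w}\,dw, \qquad K_\epsilon(w) := \frac{\pi}{4\epsilon}\cos\!\Bigl(\frac{\pi w}{2\epsilon}\Bigr)\mathbf{1}_{[-\epsilon,\epsilon]}(w).$$
The function $K_\epsilon$ is non-negative, bounded, compactly supported on $[-\epsilon,\epsilon]$, and satisfies $\int_{\mathbb{R}} K_\epsilon(w)\,dw = 1$. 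Hence $\{K_\epsilon\}_{\epsilon>0}$ is a classical approximate identity on $\mathbb{R}$.

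The remaining steps are then routine. Since $g, K_\epsilon \in L^1(\mathbb{R})$, Fubini's theorem yields
$$\int_{\mathbb{R}} m_\epsilon(s)\,\hat g(s)\,e^{\imath s u}\,\frac{ds}{2\pi} = (K_\epsilon * g)(u),$$
and the Lebesgue differentiation theorem (equivalently, the standard $L^1$ approximate-identity theorem) gives $(K_\epsilon * g)(u) \to g(u)$ as $\epsilon \to 0$ for almost every $u \in \mathbb{R}$. Multiplying by $e^{du}$ and reverting to $x = e^{-u}$ yields $\mellin^{-1}(\mellin f)(x) = f(x)$ at almost every $x \in \mathbb{R}_+$. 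Bijectivity of $\mellin : L^{1}_{\mathbb{I}}(\mathbb{R}_+) \to \mellin L^{1}_{\mathbb{I}}(\mathbb{R}_+)$ is then immediate, because surjectivity holds by the very definition of the codomain while injectivity is precisely what the inversion formula delivers.

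The main technical obstacle is step two: producing the explicit kernel $K_\epsilon$ and verifying that $\hat K_\epsilon = m_\epsilon$. Once that identification is in place, all remaining ingredients are textbook results about $L^1$ approximate identities.
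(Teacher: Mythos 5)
Your proof is correct and follows essentially the same route as the paper: both identify the regularizing factor $\pi^2\cos(\epsilon s)/(\pi^2 - 4\epsilon^2 s^2)$ as the Fourier transform of the compactly supported, unit-mass cosine kernel on $[-\epsilon,\epsilon]$, then invoke Fubini to convert the double integral into a convolution, and finally appeal to the Lebesgue differentiation theorem for almost-everywhere convergence. The only difference is bookkeeping: you convert to additive (Fourier) variables up front via $y = e^{-t}$, $g(t) = f(e^{-t})e^{-dt}$, which absorbs the shift parameter $d$ into the definition of $g$ and makes the approximate-identity structure transparent, whereas the paper works in multiplicative (Mellin) variables throughout, performing an explicit contour shift $\epsilon s \to \epsilon(s-\imath d)$ in the regularizer (justified by dominated convergence), computing the inner $s$-integral to obtain the same Heaviside-cosine kernel in the variable $\ln(y/x)/\epsilon$, and only then substituting $y = xe^{\epsilon y'}$. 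The two proofs are mathematically identical; your presentation arguably exposes the standard approximate-identity mechanism more cleanly, at the minor cost of the extra change of variables.
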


Indeed we can always choose $\mathbb{I}=\{1\}$ meaning that $f\in L^{1}_{\{1\}}(\mathbb{R}_+)$ is only Lebesgue integrable. Therefore the formula~\eqref{eq:MellinInversion} also applies for any interval with $1\in\mathbb{I}$  since we have $L^{1}_{\mathbb{I}}(\mathbb{R}_+)\subset L^{1}_{\{1\}}(\mathbb{R}_+)$. The difficult part is the characterization of functions in $\mathcal{M}L^{1}_{\mathbb{I}}(\mathbb{R}_+)$. We do not completely address this issue here by only saying that $\mathcal{M}f$ is analytic on the strip $\mathbb{I}\times\imath\mathbb{R}$ when $\mathbb{I}$ is open, and it is even bounded when $\mathbb{I}$ is compact.

\begin{proof}
The first step is to replace $\epsilon s$ with $\epsilon (s-\imath d)$ in the large fraction under the integral. 
This is possible because
\begin{align}
\label{eq:mellin1}
\lim_{\epsilon \to 0} \int_{-\infty}^{\infty} \left| \frac{\pi^2\cos \epsilon s}{\pi^2-4\epsilon^2 s ^2} - \frac{\pi^2\cos \epsilon (s-\imath d)} {\pi^2-4\epsilon^2 (s-\imath d)^2} \right| \big| \mellin f(d+\imath s) \big| \, \big| x^{-d-\imath s} \big| \, \frac{ds}{2\pi} = 0
\end{align}
for any $x \in \myreal_{+}$. Eq.~\eqref{eq:mellin1} holds by dominated convergence,
since $\mellin f(d+\imath s)$ and $x^{-d-\imath s}$ are bounded
and $(\pi^2\cos \epsilon s)(\pi^2-4\epsilon^2 s^2)^{-1}$ is continuous and bounded
by ${\rm const} \, (1+|s|^2)^{-1}$ in any strip of bounded width around the real axis.

Now the inversion formula can be readily obtained by plugging the definition \eqref{M-def} 
into Eq.~\eqref{eq:MellinInversion} and interchanging the order of integration.
This is possible because the integrand (viewed as a bivariate function of $s$ and $y$)
is Lebesgue integrable. 
Then we have
\begin{eqnarray}
 &&\lim_{\epsilon\to0}\int_{0}^{\infty} \Theta\left[1-\frac{({\rm ln}(y/x))^2}{\epsilon^2}\right]\left[\left(\frac{y}{x}\right)^{\imath \pi/(2\epsilon)}+\left(\frac{y}{x}\right)^{-\imath \pi/(2\epsilon)}\right] f(y)  \frac{\pi dy}{4\epsilon y}\nonumber\\
 &=&\lim_{\epsilon\to0}\int_{-1}^{1}\cos\left(\frac{ \pi y'}{2}\right) f(x e^{\epsilon y'})  \frac{\pi dy'}{4}.\label{cal-2.1}
\end{eqnarray}
We now use a version of the Lebesgue differentiation theorem which states that
if~$f \in L^1(\myreal^n)$ and $B(0,1)$ denotes the unit ball in $\myreal^n$, 
we have
\begin{align}
\label{eq:DifferentiationTheorem}
\lim_{\epsilon \to 0} \int_{B(0,1)} |f(x+\epsilon y) - f(x)| \, dy = 0
\end{align}
for almost all $x \in \myreal^n$; \eg see \cite[Theorem 8.8]{Rudin}.
Of course, here we have $n = 1$, but we will need the multivariate formulation later on.

It is straightforward to show that if $x \in \myreal_{+}$ such that Eq.~\eqref{eq:DifferentiationTheorem} holds,
then the limit in Eq.~\eqref{cal-2.1} is equal to $f(x)$.
Indeed, since $\int_{-1}^{1} \cos(\pi y'/2) \, \pi dy'/4 = 1$, it is sufficient to show that
\begin{align}
\lim_{\varepsilon \to 0} \int_{-1}^{1}\cos\left(\frac{ \pi y'}{2}\right)
	\left( f(x e^{\epsilon y'}) - f(x) \right) \frac{\pi dy'}{4} = 0
\end{align}
for those $x\in\mathbb{R}_+$ such that Eq.~\eqref{eq:DifferentiationTheorem} holds.
This limit follows from the observation that the integral is bounded by
\begin{eqnarray}
\frac{\pi}{4} \int_{-1}^{1} \big|f(x e^{\epsilon y'}) - f(x)\big| \, dy'
&\leq& \frac{\pi}{4} \int_{-2x}^{+2x} \big|f(x + \varepsilon z) - f(x)\big| \, \frac{dz}{\epsilon z+x}\\
&\leq& \frac{\pi}{4(1-2\epsilon)x} \int_{-2x}^{+2x} \big|f(x + \varepsilon z) - f(x)\big| \, dz\nonumber
\end{eqnarray}
for all sufficiently small $\varepsilon > 0$. The reason for this estimate is that the original integration domain is $[xe^{-\epsilon},xe^{+\epsilon}]\subset[x(1-2\epsilon),x(1+2\epsilon)]$ for sufficiently small $\varepsilon > 0$.
This completes the proof of Lemma \ref{lem:Mel-Inv}.
\end{proof}

We would expect that the inversion formula can be generalized to distributions which are normalizable on $\mathbb{R}_+$ like the Dirac delta function. However, in practice the inversion formula~\eqref{eq:MellinInversion} is often even correct without the test function. For instance,
this is the case if the integrand is absolutely integrable, possibly after a suitable deformation of the contour, see \eg the definition of the Meijer G-function~\cite{Abramowitz} for an important example.

An important property of the Mellin transform is its action on a particular type of a differentiated function. 
Assuming that $f\in  L^{1,k}_{\mathbb{I}}(\mathbb{R}_+)$, we have
\begin{equation}
\label{eq:mellin-link}
\displaystyle\mellin  \big(\Big[ \Big({-}y \frac{\partial}{\partial y}\Big)^k f(y) \Big]; s \big)
= s^k \, \mellin f(s), \qquad \re s \in \mathbb{I} \,,
\end{equation}
as follows by integration by parts.

\medskip
The Mellin transformation can be readily extended to the multivariate functions in $L^{\mathcal{H}}(A)$ of $n$ positive real variables.
With a slight abuse of notation the multivariate Mellin transform $\mathcal{M}: L^{\mathcal{H}}(A)\rightarrow \mathcal{M}L^{\mathcal{H}}(A)$ is given by
\begin{equation}\label{mult-M-def}
 \mathcal{M}f_A(s):=\frac{1}{n!}\int_A f_A(a) {\rm Perm}[a_b^{s_c-1}]_{b,c=1,\ldots,n} da,
\end{equation}
where $f_A\in L^{\mathcal{H}}(A)$ and for those $s={\rm diag}(s_1,\ldots,s_n)\in \mathbb{C}^n$ such that the integrand is Lebesgue integrable. At least for $s=\varrho'$ this integral exists due to Lemma~\ref{lem:sets}. The inverse of the multivariate Mellin transformation is  a natural generalization of Eq.~\eqref{eq:MellinInversion} and reads
\begin{eqnarray}
\label{mult-MellinInversion}
\mellin^{-1}([\mellin f_A];a)
\hspace*{-0.3cm}&:=& \hspace*{-0.3cm} \frac{1}{n!}\lim_{\epsilon\to 0}\int_{\mathbb{R}^n} \zeta_1(\epsilon s)\mellin f_A(\varrho'+\imath s) {\rm Perm}[a_b^{-\varrho'_c-\imath s_c}]_{b,c=1,\ldots,n}\prod_{j=1}^{n}\frac{ds_j}{2\pi}\nonumber\\
 &=&\hspace*{-0.3cm} f(a)
\end{eqnarray}
with $f \in L^{\mathcal{H}}(A)$ and the regularizing function
\begin{equation}\label{zet-def}
\zeta_l(\epsilon s):=\prod_{j=1}^n \frac{\pi^{2l}\cos\epsilon s_j}{\prod_{k=1}^l(\pi^2-4\epsilon^2 s_j^2/(2k-1)^2)},\ l\in\mathbb{N}.
\end{equation}
The functions $\zeta_l$ with $l>1$ will be used for the inverse spherical transform, see Lemma~\ref{lem:sphe-inv} below.

We underline that $\mathcal{M}f$ is symmetric in its arguments by definition.
In~fact, since $f \in L^{\mathcal{H}}(A)$ is also symmetric in its arguments,
we could replace the permanent with the product $n! \, \prod_{j=1}^{n} a_j^{s_j-1}$
in~\eqref{mult-M-def}. However, we prefer to make the symmetry property more transparent
by employing the permanent in~\eqref{mult-M-def}, and similarly in~\eqref{mult-MellinInversion}.

This kind of generalization of the Mellin transform to multivariate functions also has an analogue on the matrix level, namely the spherical transform. The spherical transform will be discussed in Subsection~\ref{subsec:SphericalTransform}.


\subsection{Harish Transform}\label{subsec:HarishTransform}

In this and the next subsection, we introduce two transforms,
the \emph{spherical transform} and the \emph{Harish transform},
from harmonic analysis on matrix spaces which will be crucial for relating
the joint density of the eigenvalues to that of the singular values.
In doing so, we focus on selected results which will be needed later,
and refer to \textsc{Helgason} \cite{Helgason3} for a thorough introduction
to harmonic analysis on matrix spaces.
Let us also mention the monographs by \textsc{Terras} \cite{Terras},
\textsc{Faraut} and \textsc{Koranyi} \cite{FK}
as well as \textsc{Jorgenson} and \textsc{Lang} \cite{JL:SL2R,JL:SL2C}
which contain more specialized expositions of the subject.

The \emph{spherical transform} and the \emph{Harish transform} can be defined 
either for bi-invariant functions on $G=\GL(n,\mathbb{C})$ \cite{HarCha1,HarCha2,Helgason3,JL:SL2R,JL:SL2C}
or for invariant functions on $\Omega=\Pos(n,\mycmplx)$ \cite{FK,Terras}.
Indeed, in view of the bijection \eqref{eq:bijection}, 
these approaches are essentially equivalent.
We find it more convenient to define the transforms for functions on~$\Omega$.

To introduce the \emph{Harish transform}\footnote{The word \emph{Harish} indeed stands for the mathematician Harish-Chandra. To avoid confusion with the spherical transform, which is sometimes called the \emph{Harish-Chandra transform}, we follow Jorgenson and Lang \cite{JL:SL2R} and use the shorter name \emph{Harish transform}, although it unsatisfactorily misses the second part of the name.}, let us first recall the Cholesky decomposition from Remark \ref{rem:factorizations} (i).
Then the \emph{Harish transform} $\harish: L^{1,K}(\Omega)\rightarrow L^{\mathcal{H}}(A)$ of a density $f_\Omega\in L^{1,K}(\Omega)$ on $\Omega$  is defined by
\begin{align}
\label{eq:harish-transform}
\harish f_\Omega(a) := \left( \prod_{j=1}^{n} a_j^{(n-2j+1)/2} \right) \int_T f_\Omega(t^* a t) \, dt .
\end{align}
Let us note that, by Remark \ref{rem:factorizations}\,(i), we have
\begin{equation}
\int_A \left( \prod_{j=1}^{n} a_j^{2(n-j)} \right) \int_T |f_\Omega(t^* a t)| \, dt \, da = \int_\Omega |f(y)| \, dy < \infty \,,
\end{equation}
which implies that the integral in \eqref{eq:harish-transform} is well-defined
for almost all $a \in A$. That the resulting function $\harish f_\Omega$
lies indeed in $L^{\mathcal H}(A)$ follows from the following lemma,
which we also employ when relating the singular value and eigenvalue statistics.

\begin{lemma}[Factorization of $\mathcal{H}$]\label{lem:Harish}\

 The Harish-transform factorizes into the operators
 \begin{equation}\label{Harish-fact}
 \harish=\mathcal{I}_Z^{-1}\mathcal{T}\mathcal{I}_\Omega^{-1}:\ L^{1,K}(\Omega)\rightarrow L^{\mathcal{H}}(A).
 \end{equation}
 We recall the definitions~\eqref{I-Om-def}, \eqref{T-def}, and \eqref{I-Z-def}.
\end{lemma}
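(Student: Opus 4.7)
The plan is simply to unfold the three operators on the right-hand side of~\eqref{Harish-fact} and verify, by direct pointwise computation, that the result reproduces formula~\eqref{eq:harish-transform}. Since $\mathcal{I}_\Omega^{-1}$, $\mathcal{T}$ and $\mathcal{I}_Z^{-1}$ map $L^{1,K}(\Omega) \to L^{1,K}(G) \to L^{1,{\rm EV}}(Z) \to L^{\mathcal{H}}(A)$ by the very definitions in~\eqref{def-L1Om-A}, \eqref{def-L1Z}, and~\eqref{def-L1A-new}, the correct target space is automatic, and only the pointwise equality needs to be checked.

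First I would set $f_G := \mathcal{I}_\Omega^{-1} f_\Omega$, so that $f_G(g) = C_1 f_\Omega(g^{*} g)$ with $C_1 := \prod_{j=0}^{n-1} j!/\pi^{j+1}$, and substitute into~\eqref{T-def}. The one genuine simplification happens here: bi-unitary invariance of $f_G$ collapses the Haar integral $\int_K f_G(k^{*} z t k)\,d^{*}k$ to $f_G(zt)$. Evaluating at $z = \sqrt{a} \in A \subset Z$ (real and positive, so no phase absorption via $[{\rm U}(1)]^n$ is required) and using $(\sqrt{a}\,t)^{*}(\sqrt{a}\,t) = t^{*} a t$, this yields
\begin{equation*}
\mathcal{T} f_G(\sqrt{a}) = C_1 C_2 \, |\Delta_n(\sqrt{a})|^2 \Bigl(\prod_{j=1}^{n} a_j^{n-j}\Bigr) \int_T f_\Omega(t^{*} a t)\, dt
\end{equation*}
with $C_2 := (n!)^{-1} \prod_{j=0}^{n-1} \pi^j/j!$.

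Applying $\mathcal{I}_Z^{-1}$ from~\eqref{I-Z-def} then cancels the Vandermonde $|\Delta_n(\sqrt{a})|^2$, the overall constant telescopes to $n!\,\pi^n \cdot C_1 C_2 = 1$, and the residual powers combine as
\begin{equation*}
\frac{\prod_{j=1}^{n} a_j^{n-j}}{(\det a)^{(n-1)/2}} = \prod_{j=1}^{n} a_j^{(n-2j+1)/2},
\end{equation*}
which is exactly the prefactor in~\eqref{eq:harish-transform}. Hence $\mathcal{I}_Z^{-1} \mathcal{T} \mathcal{I}_\Omega^{-1} f_\Omega(a) = \harish f_\Omega(a)$. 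The main obstacle is nothing more than the bookkeeping of the three competing exponents of $a_j$ and of the products of $\pi$'s and $j!$'s hidden in the normalization constants; there is no genuine analytic subtlety, as every intermediate integral is Lebesgue-integrable by construction and the specialization to $z = \sqrt{a}$ obviates any tracking of complex phases from $Z = \mathbb{C}_{*}^{n}$.
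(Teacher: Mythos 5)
Your proof is correct and follows essentially the same route as the paper's: unfold $\mathcal{I}_\Omega^{-1}$, use bi-unitary invariance to drop the Haar integral in $\mathcal{T}$, and match the surviving powers of $a_j$ and the normalization constants against~\eqref{eq:harish-transform}. The paper phrases the final step as establishing $\mathcal{T}\mathcal{I}_\Omega^{-1} = \mathcal{I}_Z\mathcal{H}$ and then invoking the bijectivity of $\mathcal{I}_Z$, while you apply $\mathcal{I}_Z^{-1}$ explicitly after specializing to $z=\sqrt{a}$; these are the same computation presented in a slightly different order.
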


\begin{proof}
Let $f_\Omega \in L^{1,K}(\Omega)$. Then,
using Eqs.~\eqref{I-Om-def}, \eqref{T-def} and \eqref{I-Z-def}, we have
\begin{multline}
  \mathcal{T} \mathcal{I}_\Omega^{-1} f_\Omega(z) 
= \frac{1}{n! \, \pi^n}|\Delta_n(z)|^2 \left(\prod_{j=1}^n|z_j|^{2(n-j)}\right) 
		\int_{T} f_\Omega(t^* |z|^2 t) dt \\
= \frac{1}{n! \, \pi^n}|\Delta_n(z)|^2 \det(|z|)^{n-1} \, \mathcal{H} f_\Omega(|z|^2)
= \mathcal{I}_Z \mathcal{H} f_\Omega (z) \,,
\end{multline}
\ie $\mathcal{T} \mathcal{I}_\Omega^{-1} = \mathcal{I}_Z \mathcal{H}$.
Since $\mathcal{I}_Z: L^{\mathcal{H}}(A)\rightarrow L^{1,{\rm EV}}(Z)$ is a bijection,
this completes the proof.
\end{proof}

We emphasize that the Lemma~\ref{lem:Harish} in principle relates the joint densities of the squared singular values and of the eigenvalues. We have only to apply $\mathcal{I}_A^{-1}$ from the right and $\mathcal{I}_Z$ from the left yielding the operator $\mathcal{R}=\mathcal{T}\mathcal{I}_\Omega^{-1}\mathcal{I}_A^{-1}$. However, so far, it is neither clear whether this operator is bijective nor whether the integrals over $T$ can be simplified. This is the reason why we need the spherical transform introduced in Subsection~\ref{subsec:SphericalTransform}.


\subsection{Spherical Transform}\label{subsec:SphericalTransform}

The spherical transform is a multivariate analogue of the Mellin transform.
To introduce it, let us first define the \emph{generalized power function}
\begin{align}
\label{eq:power-function}
p(y,s+\varrho) :=(\det y)^{s_n+(n-1)/2}\prod_{j=1}^{n-1}(\det \Pi_jy\Pi_j^* )^{s_j-s_{j+1}-1}
\end{align}
of a matrix $y\in \Omega$ and a diagonal matrix $s = \diag(s_1,\hdots,s_n) \in \mycmplx^n$ and $\varrho$ as in Eq.~\eqref{eq:varrho}.
Here the $j \times n$ matrix $\Pi_j=(\eins_j,0_{j\times(n-j)})$ yields the projection onto the first $j$ rows, with $\eins_j$ the $j \times j$ identity matrix and $0_{j\times(n-j)}$ the $j\times(n-j)$ zero matrix. Thus, the determinants $\det \Pi_jy\Pi_j^*$ are simply the principal minors of the matrix $y$.
The definition with the shift $\varrho$ is merely a matter of convenience. For instance, 
it makes the spherical functions and the spherical transform symmetric in the parameter $s$; 
see below.
 
The generalized power function~\eqref{eq:power-function} plays the role of the power functions $\lambda\mapsto\lambda^c$ in the Mellin transform, cf. Eqs.~\eqref{M-def} and \eqref{mult-M-def}. However the generalized power function~\eqref{eq:power-function} is not invariant under the action of the unitary group on $y$ via the adjoint action, in particular it depends on the order of the rows and columns of $y$. The correct replacement for the power function in the Mellin transform for the matrix space $\Omega$ is the spherical function $\varphi(.,s):\Omega\rightarrow\mathbb{C}$,
which has the desired invariance property in the first argument.
In general, the spherical functions $\varphi$ may be characterized by the properties
that they are not identically zero and that they satisfy the non-linear integral equation
\begin{equation}\label{sphe-def}
\int_K \varphi(y_1^{1/2}k^*y_2ky_1^{1/2},s)d^*k=\varphi(y_1,s)\varphi(y_2,s), \text{ for all }y_1,y_2\in\Omega \,;
\end{equation}
compare \eg \cite[Proposition IV.2.2]{Helgason3}.
It turns out that these functions may be parametrized by the diagonal matrix $s\in\mathbb{C}^n$,
and defined by the group integral
\begin{align}
\label{eq:spherical-function}
\varphi(y,s) = \int_K p(k^*yk,s+\varrho) \, d^*k \,;
\end{align}
compare \eg \cite[Theorem IV.4.3]{Helgason3}.

It is obvious that the spherical function is $K$-invariant by construction. Therefore we may specify $\varphi(.,s)$ via its restriction to $A\subset\Omega$. Let $\lambda(y)\in A$ be the diagonal matrix of the eigenvalues of $y\in\Omega$. Then the spherical function admits the explicit representation 
due to Gelfand and Na\u{\i}mark~\cite{GelNai}
\begin{eqnarray}
\label{eq:gelfand-naimark}
  \varphi(y,s) 
&=& \frac{\Delta_n(\varrho)}{\Delta_n(s)} \, \frac{\det[(\lambda_j(y))^{s_{k}}]_{j,k=1,\hdots,n}}{\det[(\lambda_j(y)]^{\varrho_{k}})_{j,k=1,\hdots,n}} \\
&=& \left(\prod_{j=0}^{n-1}j!\right)\frac{\det\left[(\lambda_j(y))^{s_k+(n-1)/2}\right]_{j,k=1,\hdots,n}}{\Delta_n(s) \, \Delta_n(\lambda(y))} \,,\nonumber
\end{eqnarray}
see also \cite[Theorem IV.5.7]{Helgason3}, \cite[Theorem XII.4.3]{JL:SL2R}, as well as Remark~\ref{rem:Adaption} below. 
The representation \eqref{eq:gelfand-naimark} is valid only for pairwise different $s_j$ and $\lambda_j(y)$. 
When several of these arguments are equal, we have to apply L'H\^opital's rule. 
We~also notice that  the spherical functions are sort of continuous interpolations
of the \emph{Schur polynomials} in our setting.

Equation~\eqref{sphe-def} also fixes the normalization by choosing $y_2=\eins_n$, i.e. $\varphi(y_1,s)=\varphi(y_1,s)\varphi(\eins_n,s)$ for all $y_1\in\Omega$ and, hence, $\varphi(\eins_n,s)=1$, which is indeed satisfied by Eqs.~\eqref{eq:spherical-function} and \eqref{eq:gelfand-naimark}. Furthermore, we have a symmetry under the change $\varrho \to -\varrho$ which is equal to a permutation of the entries of $\varrho$. Therefore $\varphi(y_1,\varrho)=\varphi(y_1,-\varrho)=1$. 
This follows from the general fact that $\varphi(y,s)$ is invariant under the Weyl group of $K$ acting on $s$, 
see \eg Ref.~\cite[Theorem XIV.3.1]{FK}, which is in~our case the symmetric group $\mathbb{S}_n$.

With the help of the spherical function we can define the \emph{spherical transform} $\mys: L^{1,K}(\Omega)\rightarrow\mathcal{M}L^{1,\mathcal{H}}(A)$ for a $K$-invariant density $f_\Omega\in L^{1,K}(\Omega)$ by
\begin{align}
\mys f_\Omega(s) := \int_\Omega f_\Omega(y) \varphi(y,s)  \frac{dy}{(\det y)^{n}}
            = \int_\Omega f_\Omega(y') p(y',s+\varrho) \frac{dy'}{(\det y')^{n}},
\label{eq:spherical-transform}
\end{align}
for all those $s \in \mathbb{C}^n$ for which the integrand is Lebesgue integrable. 
To see that the~two integrals in Eq.~\eqref{eq:spherical-transform} are equal, 
we substitute $y'=k^* y k$ and use the $K$-invariance of $f_\Omega$.
Using that $\varphi(y,\varrho) \equiv 1$, it is easy to see that if $f_\Omega \in L^1(\Omega)$, 
then $\mys f_\Omega(s)$ exists for all $s \in \varrho' + \imath \myreal^n$,
where the shift $\varrho'$ is defined as in Eq.~\eqref{eq:varrho}.
More generally, it can be shown~\cite[Theorem IV.8.1]{Helgason3}
that if $f_\Omega \in L^{1}(\Omega)$, then $\mys f_\Omega (s)$ exists
at least in the tube $\operatorname{conv}(\mathbb{S}_n(\varrho')) + \i\myreal^n$,
where $\operatorname{conv}(\mathbb{S}_n(\varrho'))$ denotes the convex hull
of the orbit of $\varrho'$ under the Weyl group $\mathbb{S}_n$.

\begin{remark}[Adaption of Notation]
\label{rem:Adaption}\

We emphasize that our definitions and assumptions are slightly different from those in part 
of the literature, which explains the differences in the cited results. For instance,
our spherical functions are defined on $\Omega=\Pos(n,\mycmplx)$ instead of $G=\GL(n,\mycmplx)$,
which amounts to adding a factor $1/2$ in the parameter $s$ of the spherical transform.
Also, note that we define integrability with respect to the Lebesgue measure $dy$ on $\Omega$,
whereas the literature usually defines integrability with respect to the $G$-adjoint-invariant measure
$d^* y = (\det y)^{-n} \, dy$ on~$\Omega$. This explains the additional offset $n$ in the tube
 $\varrho' + \i\myreal^n$, cf. Lemma~\ref{lem:sets}.
\end{remark}

To see that $\mys L^{1,K}(\Omega)$ is indeed contained in $\mellin L^{\mathcal{H}}(A)$,
we recall a factorization theorem proven in Refs.~\cite[Lemma 43]{HarCha2} and~\cite[Proposition III.5.1]{JL:SL2R} 
which we state here as a lemma.
 
\begin{lemma}[Factorization of $\mathcal{S}$]\label{lem:spheric}\ 

 The  spherical transform factorizes into the multivariate Mellin transform and the Harish transform as
 \begin{equation}\label{spheric-fact}
 \mys = \mellin \harish:\ L^{1,K}(\Omega)\rightarrow \mathcal{M}L^{1,\mathcal{H}}(A).
 \end{equation}
 We recall the definitions~\eqref{mult-M-def} and \eqref{eq:harish-transform}.
\end{lemma}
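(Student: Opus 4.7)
The plan is to carry out the Cholesky decomposition inside the integral defining $\mathcal{S}f_\Omega(s)$, compute the generalized power function $p(\,\cdot\,,s+\varrho)$ explicitly in Cholesky coordinates, and then recognize the resulting iterated integral as $\mathcal{M}\mathcal{H}f_\Omega(s)$. Substituting $y = t^{*}at$ from Remark~\ref{rem:factorizations}(i) into Eq.~\eqref{eq:spherical-transform}, with $dy = \bigl(\prod_{j=1}^n a_j^{2(n-j)}\bigr)\,da\,dt$ and $(\det y)^n = (\det a)^n$, the task reduces to evaluating $p(t^{*}at,s+\varrho)$. Writing $t$ in the $(j,n{-}j)$ block form
\begin{equation*}
t = \begin{pmatrix} t_j & u \\ 0 & v \end{pmatrix},
\end{equation*}
a direct multiplication shows $\Pi_j(t^{*}at)\Pi_j^{*} = t_j^{*}(\Pi_j a\Pi_j^{*})t_j$, and the upper unitriangular structure of $t_j$ forces $\det t_j = 1$, whence $\det\Pi_j y\Pi_j^{*} = \prod_{i=1}^j a_i$. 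Substituting these principal minors into Eq.~\eqref{eq:power-function} and telescoping the exponent of each $a_k$ across the product in $j$ yields the clean formula
\begin{equation*}
p(t^{*}at,s+\varrho) = \prod_{k=1}^{n} a_k^{s_k+\varrho_k}.
\end{equation*}

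Combining this with the Jacobian and the factor $(\det y)^{-n}$, the total exponent of $a_k$ becomes $s_k + \varrho_k + 2(n-k) - n = s_k - 1 + (n-2k+1)/2$. Splitting off the Harish prefactor $\prod_k a_k^{(n-2k+1)/2}$ from Eq.~\eqref{eq:harish-transform} identifies the remaining $T$-integral as $\mathcal{H}f_\Omega(a)$, and I arrive at
\begin{equation*}
\mathcal{S}f_\Omega(s) = \int_A \mathcal{H}f_\Omega(a)\,\prod_{k=1}^{n} a_k^{s_k-1}\,da.
\end{equation*}
Absolute convergence on the tube $s \in \varrho' + \imath\myreal^n$ and the Fubini interchange of the $T$- and $A$-integrals are both controlled by applying Remark~\ref{rem:factorizations}(i) to $|f_\Omega|\in L^1(\Omega)$.

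To conclude, I invoke Lemma~\ref{lem:Harish}: since $\mathcal{H}f_\Omega = \mathcal{I}_Z^{-1}\mathcal{T}\mathcal{I}_\Omega^{-1}f_\Omega$ lies in $L^{\mathcal{H}}(A)$, it is symmetric in $a$. Expanding the permanent in Eq.~\eqref{mult-M-def} as a sum over $\sigma\in\mathbb{S}_n$ and relabeling the dummy variables $a_1,\ldots,a_n$ by $\sigma$ in each summand shows that all $n!$ contributions coincide for symmetric integrands, so
\begin{equation*}
\int_A \mathcal{H}f_\Omega(a)\,\prod_{k=1}^n a_k^{s_k-1}\,da = \frac{1}{n!}\int_A \mathcal{H}f_\Omega(a)\,\mathrm{Perm}[a_b^{s_c-1}]_{b,c=1,\ldots,n}\,da,
\end{equation*}
which is precisely $\mathcal{M}\mathcal{H}f_\Omega(s)$. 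This identifies the operators $\mathcal{S}$ and $\mathcal{M}\mathcal{H}$, and combined with the mapping property $\mathcal{H}: L^{1,K}(\Omega) \to L^{\mathcal{H}}(A)$ from Lemma~\ref{lem:Harish} yields the claimed image relation. The only mildly delicate step is the exponent bookkeeping across $p$, the Cholesky Jacobian, $(\det y)^{-n}$ and the Harish prefactor; once the elementary principal-minor identity $\det\Pi_jy\Pi_j^{*} = \prod_{i=1}^j a_i$ is in hand, the remainder is a chain of mechanical identifications.
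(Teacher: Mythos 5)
Your proof is correct and follows essentially the same route as the paper's: Cholesky decomposition of $y$, the principal-minor identity $\det\Pi_j y\Pi_j^* = \prod_{l\le j} a_l$, telescoping of the exponents of $a_k$ against the Jacobian and $(\det y)^{-n}$ to expose the Harish prefactor, and finally symmetrizing via the symmetry of $\mathcal{H}f_\Omega$ (Lemma~\ref{lem:Harish}) to rewrite $\prod_k a_k^{s_k-1}$ as the permanent in $\mathcal{M}$. The added explicit block computation for $t$ and the remark on Fubini are just fuller bookkeeping of the paper's terser steps.
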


\begin{proof}
 We start from the second integral representation of Eq.~\eqref{eq:spherical-transform} and apply the Cholesky decomposition $y'=t^*at$ with $a\in A$ and $t\in T$. This yields
\begin{eqnarray}\label{spher-calc-a}
\mys f_\Omega(s) = \int_A\left(\int_T f_\Omega(t^*at) p(t^*at,s+\varrho) dt\right)\left(\prod_{j=1}^n a_j^{n-2j}\right)da.
\end{eqnarray}
The principal minors are equal to
\begin{equation}\label{spher-calc-b}
\det \Pi_jy\Pi_j^*=\det \Pi_jt^*at\Pi_j^*=\prod_{l=1}^j a_l
\end{equation}
because of $t\Pi_j^*=\Pi_j^* t_j$ with $t_j$ the $j\times j$ upper left block of $t$, which is also a uni\-triangular matrix. Let us point out that the determinant of any upper unitrangular matrix is equal to $1$. Thus we have 
\begin{eqnarray}\label{spher-calc-c}
\mys f_\Omega(s) = \int_A\left(\int_T f_\Omega(t^*at) dt\right) \left(\prod_{j=1}^n a_j^{s_j+(n-2j-1)/2}\right)da.
\end{eqnarray}
Comparison with the definition of the Harish transform~\eqref{eq:harish-transform} allows us to simplify this intermediate result to
\begin{eqnarray}\label{spher-calc-d}
\mys f_\Omega(s) = \int_A\mathcal{H}f_\Omega(a) \left(\prod_{j=1}^n a_j^{s_j-1}\right)da.
\end{eqnarray}
Recall from Lemma \ref{lem:Harish} that 
$\mathcal{H} L^{1,K}(\Omega) \subset L^{\mathcal{H}}(A)$,
the set on which we have defined  the multivariate Mellin transform \eqref{mult-M-def}.
Furthermore, again by Lemma \ref{lem:Harish}, the Harish transform is symmetric in its argument $a$,
so that we can symmetrize the product in Eq.~\eqref{spher-calc-d} to obtain
\begin{eqnarray}\label{spher-calc-e}
\mys f_\Omega(s) = \frac{1}{n!}\int_A\mathcal{H}f_\Omega(a) {\rm Perm}[a_b^{s_c-1}]_{b,c=1,\ldots,n}da=\mathcal{M}\mathcal{H}f_\Omega(a) \,,
\end{eqnarray}
which concludes the proof.
\end{proof}

Let us state and prove a variant of the spherical inversion formula~\cite[Chapters IV.3 and IV.8]{Helgason3} which 
is the analogue of the inversion formula of the Mellin inversion formula from Subsection~\ref{subsec:MellinTransform}. 
In particular, this inversion formula shows that $\mathcal{S}$ is~injective,
i.e. if $f_1,f_2\in L^{1,K}(\Omega)$ and $\mys f_1$ and $\mys f_2$ on $\varrho' + \imath \myreal^n$,
then $f_1=f_2$ almost everywhere.

\begin{lemma}[Spherical Inversion on $L^{1,K}(\Omega)$] \label{lem:sphe-inv}\

\noindent{}The inverse of the spherical transform $\mys:L^{1,K}(\Omega)\rightarrow \mathcal{S}L^{1,K}(\Omega) = \mathcal{M} L^{\mathcal{H}}(A)$ is given by
\begin{multline}
 \mathcal{S}^{-1}([\mys f_\Omega],y)
 :=\frac{1}{n! \pi^{n(n-1)/2}}\frac{1}{\Delta_n(\lambda(y))}\lim_{\epsilon\to0}\int_{\mathbb{R}^n}
 \zeta_n(\epsilon s) 
 \mathcal{S} f_\Omega(\varrho'+\imath s) \\
 \times\Delta_n(\varrho'+\imath s)\det[(\lambda_b(y))^{-c-\imath s_c}]_{b,c=1,\ldots,n}\prod_{j=1}^n\frac{ds_j}{2\pi}
 = f_\Omega(y) \,, \label{spher-inv}
\end{multline}
for almost all $y\in\Omega$,
where $\zeta_n$ is defined as in Eq.~\eqref{zet-def}
and $\lambda(y)\in A$ is the diagonal matrix 
of the eigenvalues of $y\in\Omega$ as usual.
\end{lemma}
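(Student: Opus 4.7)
The plan is to verify Eq.~\eqref{spher-inv} by direct substitution, reducing it via the explicit Gelfand-Na\u{\i}mark representation~\eqref{eq:gelfand-naimark} to a product of $n$ one-dimensional Mellin inversions covered by Lemma~\ref{lem:Mel-Inv}. Since $f_\Omega$ is $K$-invariant, both sides of \eqref{spher-inv} depend on $y$ only through $\lambda(y)$, so it suffices to take $y=\lambda\in A$. Write out $\mathcal{S}f_\Omega(\varrho'+\imath s)$ via Eq.~\eqref{eq:spherical-transform}, apply the spectral decomposition to obtain an integral over $a'\in A$ with the Jacobian from Eq.~\eqref{spec-dec}, and insert the Gelfand-Na\u{\i}mark formula for $\varphi(a',\varrho'+\imath s)$. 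The factor $\Delta_n(\varrho'+\imath s)$ appearing in the denominator of \eqref{eq:gelfand-naimark} then cancels exactly with the explicit factor $\Delta_n(\varrho'+\imath s)$ in the inversion integrand of \eqref{spher-inv}.

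Next, interchange the order of the $s$- and $a'$-integrations at fixed $\epsilon>0$. This is justified by joint Lebesgue integrability: $\zeta_n(\epsilon s)$ decays coordinatewise as $\prod_j |s_j|^{-2n}$, the Vandermonde $\Delta_n(\varrho'+\imath s)$ grows as $\prod_j |s_j|^{n-1}$ (each $s_j$ appearing linearly in exactly $n-1$ factors), and the exponentials $(a'_j)^{\imath s_k}$ and $\lambda_b^{-\imath s_c}$ are bounded in $s$. Expanding both determinants by the Leibniz formula into sums over permutations $\sigma,\tau\in\mathbb{S}_n$ and relabeling the summation indices via $k\mapsto\sigma^{-1}(k)$ and $k\mapsto\tau^{-1}(k)$, the $s$-dependence of each $(\sigma,\tau)$-term factorizes into $\prod_k (a'_{\sigma^{-1}(k)}/\lambda_{\tau^{-1}(k)})^{\imath s_k}$. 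Consequently the $s$-integral reduces to a product of $n$ independent one-dimensional Mellin-type integrals, each of which, by the same Lebesgue-differentiation argument used in the proof of Lemma~\ref{lem:Mel-Inv}, converges in the $\epsilon\to0$ limit to the distributional identity $\lambda_{\tau^{-1}(k)}\,\delta(a'_{\sigma^{-1}(k)}-\lambda_{\tau^{-1}(k)})$.

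The delta functions force $a'$ to be a permutation of $\lambda$ in each surviving term, and by the symmetry of $f_\Omega$ in its arguments, all $(n!)^2$ pairs $(\sigma,\tau)$ contribute equally after the $a'$-integration. The Leibniz signs together with the power factors extracted in the previous step combine into a factor $|\Delta_n(\lambda)|^2$, which cancels the explicit $1/\Delta_n(\lambda)$ in \eqref{spher-inv} against one copy of the Vandermonde and provides the factor needed to match the spectral-decomposition Jacobian. A direct bookkeeping of the constants $\prod_{j=0}^{n-1} j!$ from \eqref{eq:gelfand-naimark}, $\prod_{j=0}^{n-1}\pi^j/j!$ from \eqref{spec-dec}, and $1/(n!\,\pi^{n(n-1)/2})$ from \eqref{spher-inv} then produces $f_\Omega(\lambda)=f_\Omega(y)$ for almost every $y$.

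The main technical obstacle is precisely the interchange of $s$- and $a'$-integrals, which is why the formula requires the stronger regularizer $\zeta_n$ in place of the $\zeta_1$ appearing in the multivariate Mellin inversion~\eqref{mult-MellinInversion}: the extra Vandermonde factor $\Delta_n(\varrho'+\imath s)$ of total polynomial degree $n(n-1)/2$ obstructs absolute convergence under $\zeta_1$ already for $n\geq 5$. Once this interchange is in place, each of the $n$ coordinatewise integrals is a one-dimensional Mellin inversion applied to the constant function $1$, and the remainder of the proof is essentially combinatorial bookkeeping of Leibniz-expansion signs and normalization constants.
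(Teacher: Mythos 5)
Your overall strategy matches the paper's: insert the Gelfand--Na\u{\i}mark formula, interchange the $s$- and $a'$-integrations, factor the $s$-integral into one-dimensional Mellin-type integrals, let the approximate identity concentrate, and appeal to the multivariate Lebesgue differentiation theorem. The one genuine structural difference is cosmetic: the paper replaces $\zeta_n(\epsilon s)$ by $\zeta_n(\epsilon(s-\imath\sigma))$ and shifts the contours $s_j\to s_j+\imath j$ so that the resulting integrand is symmetric in $s$ and Andr\'eief's identity can be applied, whereas you expand both determinants by the Leibniz rule directly, which does not require symmetry. Both routes give the same factorization, and your observation about why $\zeta_n$ (rather than $\zeta_1$) is needed --- the $\Delta_n(\varrho'+\imath s)$ factor has total degree $n(n-1)/2$ --- is correct and matches the paper's implicit reasoning.

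There is, however, a real gap in your justification of the Fubini interchange. You bound the integrand only in $s$: the $\zeta_n(\epsilon s)$ decay, the polynomial growth of $\Delta_n(\varrho'+\imath s)$, and the fact that $|(a'_j)^{\imath s_k}|=1$. This shows the integrand is $s$-integrable for fixed $a'$, but it does not produce a jointly integrable majorant, because the modulus you obtain still carries the $a'$-dependent factor
$f_\Omega(a')\,\det\!\big[(a'_j)^{c}\big]\,\big/\,\Delta_n(a')$
coming from the real parts of the exponents, and this is not obviously Lebesgue integrable over $A$ once the Leibniz cancellations are thrown away. The paper closes this gap before inserting the Gelfand--Na\u{\i}mark formula, by using the spherical function modulus bound
$|\varphi(a,\varrho'+\imath s)|\leq\varphi(a,\varrho')$,
which produces a majorant whose $a'$-integral is exactly the integral defining $\mys f_\Omega(\varrho')$ (and hence finite, since $f_\Omega\in L^1(\Omega)$). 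You need this estimate --- or some equivalent control --- in your argument; without it the interchange is not justified. A secondary point: your description of the $\epsilon\to0$ limit via delta functions $\delta(a'_{\sigma^{-1}(k)}-\lambda_{\tau^{-1}(k)})$ is only heuristic since $f_\Omega$ is merely $L^1$; the paper instead uses the compactly supported kernel $\fourier\widetilde\zeta_n$ as an approximate identity, restricts the domain of integration to a shrinking neighborhood of $\lambda(y)$, and then invokes the $n$-dimensional Lebesgue differentiation theorem. You do reference that argument, so this is acceptable as a sketch, but the delta-function language should be replaced by the approximate-identity formulation in a complete write-up.
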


\begin{proof}
The identity $\mathcal{S}L^{1,K}(\Omega) = \mathcal{M} L^{\mathcal{H}}(A)$ is clear 
by Lemma \ref{lem:spheric} and the fact that $\mathcal{H} L^{1,K}(\Omega) = L^{\mathcal{H}}(A)$,
as follows from the factorization in Lemma \ref{lem:Harish} and the definition of the space 
$L^{\mathcal{H}}(A)$.

For the proof of \eqref{spher-inv}, fix $y \in \Omega$ such that 
\begin{align}
\label{eq:choice-of-y}
|\Delta_n(\lambda(y))| \ne 0 \quad\text{and}\quad \lim_{\varepsilon \to 0} \int_{B(0,1)} |f_\Omega(\lambda(y)+\varepsilon h) - f_\Omega(\lambda(y))| \, dh = 0 \,,
\end{align}
where the integral is over the $n$-dimensional unit ball $B(0,1)$.
Let us note that almost all $y \in \Omega$ satisfy these conditions.
The former condition follows from the fact that the eigenvalues of $y$ 
are pairwise different for almost all $y \in \Omega$,
and the latter condition may be deduced from Eq.~\eqref{eq:DifferentiationTheorem}
with $f = \FSV := \mathcal{I}_A f_\Omega$ and Eq.~\eqref{I-A-def}.

We first replace $\zeta_n(\epsilon s)$ with $\zeta_n(\epsilon (s-\imath \sigma))$,
where $\sigma :=\varrho' - (1/2)(n-1)\eins_n=(1,\hdots,n)$.
To justify this step, note that we have 
\begin{multline}
\label{eq:spherical1}
\lim_{\epsilon\to0}\int_{\mathbb{R}^n}
 \big| \zeta_n(\epsilon (s - \imath \sigma)) - \zeta_n(\epsilon s) \big|
 \big| \mathcal{S} f_\Omega(\varrho'+\imath s) \big| \\
 \times \big| \Delta_n(\varrho'+\imath s) \big| \big| \det[(\lambda_b(y))^{-c-\imath s_c}]_{b,c=1,\ldots,n} \big| \prod_{j=1}^n\frac{ds_j}{2\pi} = 0
\end{multline}
by the dominated convergence theorem, because $\zeta_n(\epsilon s)$ is continuous
and bounded by ${\rm const} \, (1+ \| s \|^2)^{-n}$ in any tube of bounded width around $\myreal^n$,
$\Delta_n(\varrho'+\imath s)$ is bounded by ${\rm const} \, \|s\|^{n-1}$, 
and all the other terms under the integral are bounded.

Now consider the definition of the spherical transform \eqref{eq:spherical-transform}.
Using the spectral decomposition $y'=k^*a k$ with $k\in K$ and $a\in A$, 
see Eq.~\eqref{spec-dec} for the change of the measure, 
as well as the $K$-invariance of $f_\Omega$ and $\varphi(\,\cdot\,,s)$,
we have 
\begin{align}
\label{inv-clac-n}
\mathcal{S} f_\Omega(\varrho'+\imath s) = \left(\frac{1}{n!}\prod_{j=0}^{n-1}\frac{\pi^{j}}{j!}\right) 
	\int_A f_\Omega(a) \varphi(a,\varrho'+\imath s) |\Delta_n(a)|^2 \frac{da}{(\det a)^n} \,.
\end{align}
where the integral over $A$ exists as a Lebesgue integral. 
In particular, this also holds for $s = 0$.

Inserting Eq.~\eqref{inv-clac-n} into Eq.~\eqref{spher-inv} 
(but with $\zeta_n(\epsilon s)$ replaced by $\zeta_n(\epsilon (s - \imath \sigma))$)
and using the simple estimate $|\varphi(a,\varrho'+\imath s)|\leq \varphi(a,\varrho')$,
we find that the resulting integrand (viewed as a function of $a$ and $s$)
is bounded, up to a constant, by
\begin{multline}
\label{inv-clac-o}
\bigg|\zeta_n(\epsilon (s - \imath \sigma)) \Big(f_\Omega(a) \varphi(a,\varrho'+\imath s)\frac{|\Delta_n(a)|^2}{(\det a)^n}\Big) 
	\Delta_n(\varrho'+\imath s) {\rm det}[(\lambda_b(y))^{-c}]_{b,c=1,\ldots,n}\bigg| \\
\leq \big|\zeta_n(\epsilon (s - \imath \sigma))\Delta_n(\varrho'+\imath s)\big| 
	\Big(|f_\Omega(a)| \varphi(a,\varrho')\frac{|\Delta_n(a)|^2}{(\det a)^n}\Big)
	{\rm Perm}[(\lambda_b(y))^{-c}]_{b,c=1,\ldots,n} \,.
\end{multline}
Here the first factor is integrable with respect to $s$ 
by the estimates below Eq.~\eqref{eq:spherical1},
the second factor is integrable with respect to $a$ by the previous argument,
and the third factor is bounded (as $y$ is fixed).
Thus, the integrand in Eq.~\eqref{spher-inv} is Lebesgue integrable in $a$ and $s$,
and we may interchange the integrations over $s$ and $a$.

Inserting the representation \eqref{eq:gelfand-naimark} for the spherical function and simplifying, 
we~therefore obtain
\begin{multline}
\label{inv-clac-p}
\mys^{-1}([\mys f_\Omega],y) = 
\frac{1}{(n!)^2 \Delta_n(\lambda(y))}\lim_{\epsilon\to0}\int_A f_\Omega(a) \bigg( \int_{\mathbb{R}^n}\zeta_n(\epsilon (s-\imath \sigma)) \\ \times
	\det\left[a_j^{k+\imath s_k}\right]_{j,k=1,\hdots,n} 
	\det\left[(\lambda_b(y))^{-c-\imath s_c}\right]_{b,c=1,\ldots,n} 
	\prod_{j=1}^n \frac{ds_j}{2\pi} \bigg) \Delta_n(a) \, \frac{da}{\det a} \,.
\end{multline}
In the inner integral, we may now make the substitions $s_j \to s_j + \imath j$ 
and deform the resulting contours back to the real axis.
After that, the integrand is symmetric in~$s$,
and we apply the Andr\'eief identity \cite{Andreief} to obtain
\begin{eqnarray}
&&\int_{\mathbb{R}^n}\zeta_n(\epsilon s) \det\left[a_j^{\imath s_k}\right]_{j,k=1,\hdots,n} 
	\det\left[(\lambda_b(y))^{-\imath s_c}\right]_{b,c=1,\ldots,n} 
	\prod_{j=1}^n \frac{ds_j}{2\pi}\nonumber \\
	&=& n! \det \left[ \int_{\myreal} \widetilde\zeta_n(\varepsilon t) \left( \frac{a_j}{\lambda_k(y)} \right)^{\imath t} \frac{dt}{2\pi} \right]_{j,k=1,\hdots,n}\nonumber\\
&=& n! \det \left[ \frac{1}{\varepsilon} \fourier \widetilde\zeta_n \left( \frac{1}{\varepsilon} {\rm ln}\biggl(\frac{a_j}{\lambda_k(y)}\biggl) \right) \right]_{j,k=1,\hdots,n}
\label{inv-clac-q}
\end{eqnarray}
where
\begin{equation}\label{tildezeta}
\widetilde\zeta_n(t):=\frac{\pi^{2n}\cos t}{\prod_{k=1}^{n}(\pi^2-4t^2/(2k-1)^2)},
\end{equation}
with the Fourier transform
\begin{equation}\label{F-tildezeta}
 \mathcal{F}\widetilde\zeta_n\left(u\right):=c \, 
\Theta\left[1-u^2\right] \cos^{2n-1} \left[\frac{\pi u}{2}\right]
 \end{equation}
and $c$ its normalization constant which is not that important.

There are only two things we need to know here.
First, the integration over $a_j$ restricts to a compact domain 
$\bigcup_{k=1}^{n} [\lambda_k(y)e^{-\epsilon},\lambda_k(y) e^{\epsilon}]$
due to the Heaviside step function, 
and, second, the Fourier transform is normalized,
i.e. $\int_{\myreal} \fourier \widetilde\zeta_n(u) \, du = 1$,
because $\widetilde\zeta_n(0)=1$. 
 
Now recall that we started from a value $y \in \Omega$ such that $|\Delta_n(y)| \ne 0$.
Thus, for sufficiently small $\varepsilon > 0$, 
$|\Delta_n(a)|$ is bounded away from zero and infinity
on the domain of integration (say $D \subset \myreal^n$),
and since we know that $\FSV$ is integrable over $D$, we~may infer 
from Eq.~\eqref{I-A-def} that $f_\Omega$ is integrable over $D$ as well.
Thus, since all the other functions are bounded on $D$,
we may expand the Vandermonde determinant in \eqref{inv-clac-p}
and the determinant in Eq.~\eqref{inv-clac-q}, and we obtain (exploiting symmetry)
\begin{multline}
\label{inv-clac-s}
\mys^{-1}([\mys f_\Omega],y) = 
\frac{1}{\Delta_n(\lambda(y))} \lim_{\epsilon\to0} \int_A f_\Omega(a)
\left( \prod_{j=1}^n \frac{1}{\epsilon} \mathcal{F}\widetilde\zeta_n\left(\frac{1}{\epsilon}
	{\rm ln}\biggl(\frac{a_j}{\lambda_j(y)}\biggl)\right) \right)
 \, \Delta_n(a) \frac{da}{\det a} \,.
\end{multline}
Substituting $a_j = \lambda_j(y) e^{\varepsilon a_j'}$
and setting $\lambda(y) e^{\varepsilon a'} := (\lambda_1(y) e^{\varepsilon a_1'},\hdots,\lambda_n(y) e^{\varepsilon a_n'})$ for~abbreviation, we further obtain
\begin{align}
\label{inv-clac-t}
\mys^{-1}([\mys f_\Omega],y) = 
\lim_{\epsilon\to0} \int_{[-1,+1]^n} f_\Omega(\lambda(y) e^{\varepsilon a'})
\left( \prod_{j=1}^n \mathcal{F}\widetilde\zeta_n\left(a_j'\right) \right) \frac{\Delta_n(\lambda(y) e^{\varepsilon a'})}{\Delta_n(\lambda(y))} \, da' \,.
\end{align}
Now a similar argument as in the proof of Lemma \ref{lem:Mel-Inv} shows that 
the limit is equal to $f_\Omega(\lambda(y))$, and hence to $f_\Omega(y)$
by $K$-invariance of $f_\Omega$. This concludes the proof.
\end{proof}

On the one hand we are quite confident that the form of the inversion formula \eqref{spher-inv} can be extended to a certain class of distributions as for the Mellin inversion. 
On the other hand in applications like the Gaussian, Jacobi and even the Meijer G-ensembles, see Section~\ref{sec:MainResults}, 
we encounter densities where we do not need a regularization at all. In particular, $\zeta_n(\epsilon s)$ can be omitted 
when the rest of the integrand is absolutely integrable, possibly after an appropriate deformation of the contours.


\section{Main Results}\label{sec:MainResults}

Now we are ready to formulate our main results. We only have to put the pieces together that we have proven in the Lemmas~\ref{lem:Mel-Inv}, \ref{lem:Harish}, \ref{lem:spheric}, and \ref{lem:sphe-inv}. Thus, we obtain the following commutative diagram:
\begin{equation}\label{diagram}
\begin{array}{ccccc} \vspace*{-0.3cm} 
 L^{1,K}(G) & \overset{\displaystyle\mathcal{I}_\Omega}{ \text{\scalebox{3}[1]{$\rightarrow$}}} &  L^{1,K}(\Omega) & \overset{\displaystyle\mathcal{I}_A}{\text{\scalebox{3}[1]{$\rightarrow$}}} & L^{1,{\rm SV}}(A) \\   \vspace*{0.0cm}  \\
 \text{\scalebox{1}[3]{$\downarrow$}}\begin{minipage}{0.3cm} \vspace*{-0.5cm}$\mathcal{T}$\end{minipage} & & \text{\scalebox{1}[3]{$\downarrow$}}\begin{minipage}{0.3cm} \vspace*{-0.5cm}$\mathcal{H}$\end{minipage} & \hspace*{-0.3cm}\begin{minipage}{1cm}\vspace*{-0.6cm}\text{\scalebox{2.7}[1.6]{$\searrow$}}\end{minipage}\hspace*{-0.5cm}\begin{minipage}{0.3cm}\vspace*{-1cm} $\mathcal{S}$\end{minipage}  &   \\ \vspace*{-0.5cm} \\
 L^{1,{\rm EV}}(Z) & \overset{\displaystyle\mathcal{I}_Z}{\text{\scalebox{3}[1]{$\leftarrow$}}} & L^{\mathcal{H}}(A) & \overset{\displaystyle\mathcal{M}}{\text{\scalebox{3}[1]{$\rightarrow$}}}  & \mathcal{M}L^{\mathcal{H}}(A)
\end{array}
\end{equation}
It is commutative due to the factorizations of the Harish transform $\mathcal{H}$, see Lemma~\ref{lem:Harish}, and the spherical transform $\mathcal{S}$, see Lemma~\ref{lem:spheric}. Moreover we also know that all operators in this diagram are bijective on these spaces of densities since $\mathcal{I}_\Omega$,  $\mathcal{I}_A$, and  $\mathcal{I}_Z$ as well as $\mathcal{S}$ and $\mathcal{M}$ are bijective.  In subsection~\ref{subsec:Map} we compose the operators such that we find the corresponding map between the joint densities of the singular values $L^{1,{\rm SV}}(A)$ and those of the eigenvalues $L^{1,{\rm EV}}(Z)$ for general bi-unitarily invariant densities $L^{1,K}(G)$.

The diagram is indeed richer in its interpretation than only the relation between the eigenvalues and singular values, 
because the elements of $L^{1,K}(\Omega)$ are the $K$-invariant densities on the space of positive definite Hermitian matrices and the elements of $L^{\mathcal{H}}(A)$ are
closely related to the joint densities of the radii of the eigenvalues 
of bi-unitarily invariant random matrices; see Remark \ref{rem:RadiusDistribution} above. 
Only the image $\mathcal{M}L^{\mathcal{H}}(A)$ of the Mellin transform of $L^{\mathcal{H}}(A)$ is auxiliary.

In Subsections~\ref{subsec:PolynomialEnsembles} and \ref{subsec:break} we consider two direct applications of this new map. These applications are to polynomial ensembles and to a particular class of non-bi-unitarily invariant ensembles, respectively.

\subsection{Mapping between Singular Value and Eigenvalue Statistics}\label{subsec:Map}

Our first result is the bijective map between the space $L^{1,{\rm SV}}(A)$ of the densities for the squared singular values 
and the space $L^{1,{\rm EV}}(Z)$ of the densities for the eigenvalues of bi-unitarily  invariant densities on $G$.

\begin{theorem}[Map between $\FSV$ and $\FEV$]\label{thm:Map}\

The map
\begin{equation}\label{R-def}
 \mathcal{R}=\mathcal{T}\mathcal{I}_\Omega^{-1}\mathcal{I}_A^{-1}:\ L^{1,{\rm SV}}(A)\rightarrow L^{1,{\rm EV}}(Z)
\end{equation}
 from the joint densities of the singular values $L^{1,{\rm SV}}(A)=\mathcal{I}_A\mathcal{I}_\Omega L^{1,K}(G)$ to the joint densities of the eigenvalues $ L^{1,{\rm EV}}(Z)=\mathcal{T}L^{1,K}(G)$ induced by the bi-unitarily invariant signed densities $L^{1,K}(G)$ is bijective and has  the explicit integral representation
\begin{eqnarray}
\FEV(z)\hspace*{-0.1cm}&=&\hspace*{-0.1cm}\mathcal{R}\FSV(z)\label{R-rep}\\
&=&\hspace*{-0.1cm}\frac{\prod_{j=0}^{n-1}j!}{(n!)^2\pi^n}|\Delta_n(z)|^2\lim_{\epsilon\to0}\int_{\mathbb{R}^n}\zeta_1(\epsilon s){\rm Perm}\bigl[|z_b|^{-2(c+\imath s_c)}\bigl]_{b,c=1,\ldots,n}\nonumber\\
 &&\times \Bigl(\int_A \FSV(a)\frac{\det[a_b^{c+\imath s_c}]_{b,c=1,\ldots,n}}{\Delta_n(\varrho'+\imath s)\Delta_n(a)}\prod_{j=1}^n\frac{da_j}{a_j}\Bigl)\prod_{j=1}^n\frac{ds_j}{2\pi}\nonumber
\end{eqnarray}
with $\FSV\in L^{1,{\rm SV}}(A)$, $\varrho'=\diag(\varrho'_1,\ldots,\varrho'_n)$, $\varrho'_j=(2j+n-1)/2$ for $j=1,\ldots,n$, and $s$ embedded as a diagonal matrix,
and
\begin{eqnarray}\label{R-inv-rep}
\FSV(a)\hspace*{-0.1cm}&=&\hspace*{-0.1cm}\mathcal{R}^{-1}\FEV(a)\\
&=&\hspace*{-0.1cm}\frac{\pi^n}{(n!)^2\prod_{j=0}^{n-1}j!}\Delta_n(a)\lim_{\epsilon\to0}\int_{\mathbb{R}^n}\zeta_n\left(\epsilon s\right)\Delta_n(\varrho'+\imath s)\det[a_b^{-c-\imath s_c}]_{b,c=1,\ldots,n}\nonumber\\
&&\times\Bigl(\int_A {\rm Perm}[{a'}_b^{c+\imath s_c}]_{b,c=1,\ldots,n}
\, \frac{\FEV(\sqrt{a'})}{|\Delta_n(\sqrt{a'})|^2} \, \prod_{j=1}^n\frac{da'_j}{a'_j}\Bigl)\prod_{j=1}^n\frac{ds_j}{2\pi}\nonumber
\end{eqnarray}
with $\FEV\in L^{1,{\rm EV}}(Z)$ for its inverse
and the regularizing function
\begin{equation}\label{zet-def-b}
\zeta_l(s)=\prod_{j=1}^n \frac{\pi^{2l}\cos s_j}{\prod_{k=1}^l(\pi^2-4 s_j^2/(2k-1)^2)},\ l\in\mathbb{N}.
\end{equation}
We call $\mathcal{R}$ the \emph{SEV-transform}.
\end{theorem}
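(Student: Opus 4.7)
The plan is to exploit the commutative diagram \eqref{diagram} together with the explicit inversion formulas established in Subsections \ref{subsec:MellinTransform} and \ref{subsec:SphericalTransform}, and essentially to chase the various normalizing constants. Lemma \ref{lem:Harish} gives $\mathcal{T}\mathcal{I}_\Omega^{-1}=\mathcal{I}_Z\mathcal{H}$, hence
\begin{equation*}
\mathcal{R}=\mathcal{T}\mathcal{I}_\Omega^{-1}\mathcal{I}_A^{-1}=\mathcal{I}_Z\circ\mathcal{H}\circ\mathcal{I}_A^{-1},
\end{equation*}
where each factor is a bijection between the indicated function spaces, so $\mathcal{R}$ is automatically a bijection with inverse $\mathcal{R}^{-1}=\mathcal{I}_A\circ\mathcal{H}^{-1}\circ\mathcal{I}_Z^{-1}$. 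This handles the first assertion of the theorem.

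For the forward representation I use $\mathcal{H}=\mathcal{M}^{-1}\mathcal{S}$ from Lemma \ref{lem:spheric}, so that $\mathcal{R}=\mathcal{I}_Z\circ\mathcal{M}^{-1}\circ\mathcal{S}\circ\mathcal{I}_A^{-1}$. The constants in $\mathcal{I}_A^{-1}$ and the spectral-decomposition Jacobian entering $\mathcal{S}$ via Eq.~\eqref{spec-dec} cancel exactly, leaving
\begin{equation*}
\mathcal{S}\mathcal{I}_A^{-1}\FSV(s)=\int_A\FSV(a)\,\varphi(a,s)\,\frac{da}{(\det a)^n}.
\end{equation*}
Inserting the Gelfand--Na\u{\i}mark formula \eqref{eq:gelfand-naimark} for $\varphi(a,\varrho'+\imath s)$ and absorbing $(\det a)^{-n}$ row-wise into the determinant lowers each exponent to $c-1+\imath s_c$; factoring one more $1/a_b$ out of each row produces precisely the inner $a$-integral of Eq.~\eqref{R-rep}. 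Next the multivariate Mellin inversion \eqref{mult-MellinInversion} recovers $\mathcal{H}\mathcal{I}_A^{-1}\FSV(|z|^2)$, and multiplication by $\mathcal{I}_Z$ contributes $(n!\pi^n)^{-1}|\Delta_n(z)|^2(\det|z|)^{n-1}$. The factor $(\det|z|)^{n-1}$ is absorbed row-wise into the permanent ${\rm Perm}[|z_b|^{-2\varrho'_c-2\imath s_c}]$, whose exponent collapses to $-2(c+\imath s_c)$, yielding Eq.~\eqref{R-rep}.

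For the inverse, I use $\mathcal{R}^{-1}=\mathcal{I}_A\circ\mathcal{S}^{-1}\circ\mathcal{M}\circ\mathcal{I}_Z^{-1}$. Computing $\mathcal{M}\mathcal{I}_Z^{-1}\FEV(\varrho'+\imath s)$ from Eq.~\eqref{mult-M-def}, the factor $(\det a)^{-(n-1)/2}$ in $\mathcal{I}_Z^{-1}$ is absorbed row-wise into the permanent and shifts its exponent down from $\varrho'_c-1$ to $c-1$; after relabeling $a\to a'$ and extracting $\prod da'_j/a'_j$ this produces the inner $a'$-integral of Eq.~\eqref{R-inv-rep}. Substituting into the spherical inversion formula \eqref{spher-inv} at $y=a\in A$ (where $\lambda(a)=a$) gives $\mathcal{H}^{-1}\mathcal{I}_Z^{-1}\FEV(a)$. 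Finally $\mathcal{I}_A$ multiplies by $(n!)^{-1}\prod_{j=0}^{n-1}(\pi^j/j!)\,|\Delta_n(a)|^2$; combined with the prefactor $(n!\pi^{n(n-1)/2}\Delta_n(a))^{-1}$ in \eqref{spher-inv} and the identity $|\Delta_n(a)|^2/\Delta_n(a)=\Delta_n(a)$ for real $a$, the overall constant collapses to $\pi^n/[(n!)^2\prod_{j=0}^{n-1}j!]$, matching Eq.~\eqref{R-inv-rep}.

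The main technical obstacle is not conceptual but pure bookkeeping: tracking the $\pi$-powers and factorials through three composed transforms and checking that Fubini applies at each step so that the regularizing kernels $\zeta_1$ (from Mellin inversion) and $\zeta_n$ (from spherical inversion) appear with the correct indices. The required integrability is already built into the definitions of $L^{\mathcal{H}}(A)$, $L^{1,{\rm SV}}(A)$, and $L^{1,{\rm EV}}(Z)$ and into the dominated-convergence estimates used in the proofs of Lemmas \ref{lem:Mel-Inv} and \ref{lem:sphe-inv}, so no new analytic input is needed beyond those lemmas.
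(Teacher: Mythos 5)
Your proof is correct and follows essentially the same route as the paper: you read off the factorization $\mathcal{R}=\mathcal{I}_Z\mathcal{M}^{-1}\mathcal{S}\mathcal{I}_A^{-1}$ from the commutative diagram \eqref{diagram} via Lemmas \ref{lem:Harish} and \ref{lem:spheric}, invoke the inversion formulas of Lemmas \ref{lem:Mel-Inv} and \ref{lem:sphe-inv} to get bijectivity and the regularizers $\zeta_1,\zeta_n$, and then chase the Jacobians and $\pi$/factorial constants through the compositions -- all exactly as in the paper's proof, with the intermediate identities $\mathcal{S}\mathcal{I}_A^{-1}\FSV$ and $\mathcal{M}\mathcal{I}_Z^{-1}\FEV$ reducing to the same two inner integrals as Eqs.~\eqref{map-calc-b} and \eqref{map-calc-d}.
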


Note that the integral representation~\eqref{R-rep} is indeed a simplification compared to Eq.~\eqref{rel-ev-2} where we have to integrate over $T$. The number of integration variables is reduced from $n(n-1)$ for the integral over $T$ to $2n$ for the operator $\mathcal{R}$. Moreover we have an explicit representation of the inverse $\mathcal{R}^{-1}$ which was not known before, not to mention that it was known to be invertible.

As mentioned before, the regularizing functions $\zeta_n$ can usually be omitted in practice. 
Quite often we can deform the contour such that the integrand without $\zeta_n$ is Lebesgue integrable.

Let us emphasize that we could also have started from the set $L^{1,K}_{\text{prob}}(G)$ 
of all bi-unitarily invariant \emph{probability densities} on $G = \GL(n,\mathbb{C})$.
In this case, we would have obtained a similar bijection between the set $L^{1,{\rm SV}}_{\text{prob}}(A)$ 
of all symmetric probability densities on $A$
and the set $L^{1,{\rm EV}}_{\text{prob}}(Z) := \mathcal{T} L^{1,K}_{\text{prob}}(G)$ 
of all \emph{induced} symmetric probability densities on $Z$.
However, in general, this is only a subset of the set of all probability densities 
in $\mathcal{T} L^{1,K}(G)$; see Remark \ref{rem:correspondence} below.

\begin{proof}[Proof of Theorem \ref{thm:Map}]
Starting from the commutative diagram~\eqref{diagram} we have
 \begin{equation}\label{map-calc-a}
 \mathcal{R}=\mathcal{T}\mathcal{I}_\Omega^{-1}\mathcal{I}_A^{-1}=\mathcal{I}_Z\mathcal{H}\mathcal{I}_A^{-1}= \mathcal{I}_Z\mathcal{M}^{-1}\mathcal{S}\mathcal{I}_A^{-1}.
 \end{equation}
 Since all of the operators on the right hand side are invertible also $\mathcal{R}$ is invertible with $\mathcal{R}^{-1}=\mathcal{I}_A\mathcal{S}^{-1}\mathcal{M}\mathcal{I}_Z^{-1}$.
 
 The explicit representations of $\mathcal{R}$ and $\mathcal{R}^{-1}$ directly follow from those of the operators $\mathcal{I}_A$, $\mathcal{S}$, $\mathcal{M}$ and  $\mathcal{I}_Z$, see Eqs.~\eqref{I-A-def}, \eqref{eq:spherical-transform}, \eqref{eq:gelfand-naimark}, \eqref{spher-inv}, \eqref{mult-M-def}, \eqref{mult-MellinInversion} and \eqref{I-Z-def}, respectively. We first consider $\mathcal{R}$ then we have for the product $\mathcal{S}\mathcal{I}_A^{-1}$,
 \begin{equation}\label{map-calc-b}
  \mathcal{S}\mathcal{I}_A^{-1}\FSV(s)=\left(\prod_{j=0}^{n-1}j!\right)\int_A\FSV(a)\frac{\det[a_b^{s_c-(n-1)/2}]_{b,c=1,\ldots,n}}{\Delta_n(s)\Delta_n(a)}\frac{da}{\det a}
 \end{equation}
 for any $\FSV\in L^{1,{\rm SV}}(A)$. The product $\mathcal{I}_Z\mathcal{M}^{-1}$ explicitly reads
\begin{eqnarray}
\mathcal{I}_Z\mathcal{M}^{-1}\mathcal{S}\mathcal{I}_A^{-1}\FSV(z)&=& \frac{1}{(n!)^2\pi^n}|\Delta_n(z)|^2\lim_{\epsilon\to 0}\int_{\mathbb{R}^n} \zeta_1(\epsilon s)\mathcal{S}\mathcal{I}_A^{-1}\FSV(\varrho'+\imath s)\nonumber\\
&&\times {\rm Perm}[|z_b|^{-2(c+\imath s_c)}]_{b,c=1,\ldots,n}\prod_{j=1}^{n}\frac{ds_j}{2\pi}.
\label{map-calc-c}
\end{eqnarray}
The combination of Eqs.~\eqref{map-calc-b} and \eqref{map-calc-c} leads to the result~\eqref{R-rep}.

For $\mathcal{R}^{-1}$ we first consider the product $\mathcal{M}\mathcal{I}_Z^{-1}$ which is equal to
 \begin{equation}\label{map-calc-d}
  \mathcal{M}\mathcal{I}_Z^{-1}\FEV(s)=\pi^n\int_A {\rm Perm}[{a'}_b^{s_c-1}]_{b,c=1,\ldots,n} \, \frac{\FEV(\sqrt{a'})}{|\Delta_n(\sqrt{a'})|^2} \frac{da'}{(\det a')^{(n-1)/2}}
 \end{equation}
 for any $\FEV\in L^{1,{\rm EV}}(Z)$. The other product involved, $\mathcal{I}_A\mathcal{S}^{-1}$, is explicitly
\begin{eqnarray}
\mathcal{I}_A\mathcal{S}^{-1}\mathcal{M}\mathcal{I}_Z^{-1}\FEV(a)&=& \frac{\Delta_n(a)}{(n!)^2 \prod_{j=0}^{n-1}j!}\lim_{\epsilon\to0}\int_{\mathbb{R}^n}\zeta_n\left(\epsilon s\right)\nonumber\\
 &&\hspace*{-3cm}\times \mathcal{M}\mathcal{I}_Z^{-1}\FEV(\varrho'+\imath s)\Delta_n(\varrho'+\imath s)\det[a_b^{-c-\imath s_c}]_{b,c=1,\ldots,n}\prod_{j=1}^n\frac{ds_j}{2\pi}.
\label{map-calc-e}
\end{eqnarray}
Again we combine both intermediate results which yields the second equation of the theorem and completes the proof.
\end{proof}

It is quite remarkable that the SEV-transform $\mathcal{R}$ is bijective. Considering the singular value statistics it is clear that we can easily go back and forth between $f_G$ and $\FSV$ for bi-unitarily invariant ensembles since the integrals over the unitary group $K$ factorize and its normalized Haar measure is uniquely given. However the bijective relation between the densities $f_G$ and $\FEV$ is not that obvious. Neither the non-compact group integral in Eq.~\eqref{rel-ev} nor the integral over the unitriangular group $T$ in the Schur decomposition~\eqref{rel-ev-2} factorizes.
Contrary, it is worth emphasizing that apart from the Vandermonde factor,
the eigenvalue density depends only on the \emph{moduli} of the eigenvalues $z$.
Accepting this property,
then both, the density of the singular values and that of the eigenvalues, effectively depend 
only on $n$ free real positive parameters, the singular values and the radial parts of the eigenvalue.
In this respect, the existence of a bijection between the two densities comes perhaps not too unexpected. 
Regardless of whether this correspondence is puzzling or not, it is the bi-unitary invariance of the ensemble on $G$ which ensures this one-to-one correspondence. Hence we obtain the immediate consequences which we summarize in the following corollary. 
Note that this corollary is stated in terms of random matrices and, thus, for probability densities.
However, let point out that this corollary can also be extended to signed densities,
although the formulation is a bit cumbersome.

\begin{corollary}[$\FEV$ of $\FSV$ times Unitary Matrices]\label{cor:conclusions}\

Let $a\in A$ be a positive definite diagonal random matrix with the probability density $\FSV \in L^{1,{\rm SV}}(A)$.  
Then the joint density of the eigenvalues of each of the following random matrices on $G$
is given by $\FEV=\mathcal{R}\FSV$:

 \begin{itemize}
  \item[(a)] $k_1 ak_2$ with $k_1,k_2\in K$ distributed by the normalized Haar measure
  and $k_1,a,k_2$ independent,
  \item[(b)] $k_0 ak$ with a fixed $k_0\in K$ and $k\in K$ distributed by the normalized Haar measure, 
  and $a$ and $k$ independent,
  \item[(c)] $k ak_0$  with a fixed $k_0\in K$ and $k\in K$ distributed by the normalized Haar measure,
  and $a$ and $k$ independent.
 \end{itemize}
 
In particular, the choice $k_0=\eins_n$ is possible in (b) and (c).
\end{corollary}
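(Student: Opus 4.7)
The plan is to treat case~(a) as a direct application of Theorem~\ref{thm:Map}, and then to reduce cases~(b) and~(c) to~(a) by exploiting the cyclic invariance of the eigenvalues of a product.

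For case~(a), I would verify the two hypotheses of Theorem~\ref{thm:Map}. First, bi-unitary invariance of the law of $g = k_1 a k_2$ follows from the left- and right-invariance of the Haar measure on $K$: for any $u_1, u_2 \in K$, the pair $(u_1 k_1, k_2 u_2)$ is distributed as $(k_1, k_2)$, and since $a$ is independent of both unitary factors, $u_1 g u_2 = (u_1 k_1)\, a\, (k_2 u_2)$ has the same law as $g$. Second, the induced symmetric density of the squared singular values of $g$ on $A$ equals $\FSV$, because $g g^* = k_1 a a^* k_1^*$ has eigenvalues equal to the entries of $a a^*$. Therefore Theorem~\ref{thm:Map} yields $\FEV = \mathcal{R}\FSV$.

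For cases~(b) and~(c), the matrices $k_0 a k$ and $k a k_0$ are only one-sided unitarily invariant, so Theorem~\ref{thm:Map} does not apply directly. Here I would invoke the elementary fact that two $n\times n$ matrices $MN$ and $NM$ share the same characteristic polynomial and hence the same multiset of eigenvalues. Applied with $(M,N) = (k_0 a, k)$ one finds that the eigenvalues of $k_0 a k$ coincide with those of $(k k_0)\, a$; applied with $(M,N) = (k, a k_0)$ one finds that the eigenvalues of $k a k_0$ coincide with those of $(k_0 k)\, a$; and the same argument applied to case~(a) shows that the eigenvalues of $k_1 a k_2$ coincide with those of $(k_2 k_1)\, a$. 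Now each of $k k_0$, $k_0 k$, and $k_2 k_1$ is Haar-distributed on $K$ (the first two by the right- and left-invariance of the Haar measure, and the third by the standard fact that a product of two independent Haar elements is again Haar). Consequently all three cases produce the same eigenvalue distribution, namely that of $k' a$ with $k' \in K$ Haar, which by case~(a) is governed by the joint density $\mathcal{R}\FSV$.

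The main, and rather mild, obstacle is bookkeeping: the cyclic identity has to be read at the level of the symmetric joint densities of Section~\ref{subsec:MatrixSpaces} (not with any fixed ordering of the eigenvalues), and one has to keep in mind that cases~(b) and~(c) are \emph{not} bi-unitarily invariant, so the identity $\FEV = \mathcal{R}\FSV$ in those cases is an equality of eigenvalue distributions rather than a statement about the full law of the random matrix on $G$.
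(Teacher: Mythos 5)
Your proof is correct and follows essentially the same route as the paper's: case (a) is a direct application of Theorem~\ref{thm:Map} once the bi-unitary invariance and the induced singular-value density are noted, and cases (b) and (c) are reduced to (a) via the cyclic invariance of eigenvalues under $MN \mapsto NM$ combined with the bi-invariance of the Haar measure on $K$. The only cosmetic difference is that the paper reduces all three cases to matrices of the form $a\,k$ while you reduce them to $k'\,a$, which is the same argument.
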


\begin{proof}
The statement for the case (a) is clear by Theorem~\ref{thm:Map},
because $k_1 a k_2$ has the density $\mathcal{I}_\Omega^{-1}\mathcal{I}_A^{-1} \FSV$.
For the case (b) we note that $k_0 ak$ and $akk_0$ have the same eigenvalues. Due to the Haar measure we can absorb $k_0$. Thus the random matrices $akk_0$ and $ak$  share the joint density of the eigenvalues. The same is true for $k_1 a k_2$ and $ak$ with $k,k_1,k_2\in K$ independently distributed by the normalized Haar measure on $K$,
and independent from $a$. Therefore $k_0 ak$ of (b) and $k_1ak_2$ of (a) share the same joint density of their eigenvalues. Analogously one can prove case (c).
\end{proof}

Although the explicit results~\eqref{R-rep} and \eqref{R-inv-rep} look bulky and hard to handle, it is quite the opposite. In Subsection~\ref{subsec:PolynomialEnsembles} we consider the case of polynomial ensembles which have the nice property that they give rise to a determinantal point process. For these particular ensembles the SEV transform $\mathcal{R}$ drastically simplifies and we can even  formulate a statement when exactly the joint density has the simple form~\eqref{eq:EVD}.

In Subsection~\ref{subsec:break} we show how to generalize the relation between the joint densities of the eigenvalues and the singular values to certain classes of ensembles which even break the bi-unitary invariance via squeezing the complex spectrum in certain directions or creating repulsions from distinguished points in the spectrum. Hence the applicability of our results 
is far from being restricted to bi-unitarily invariant ensembles.


\subsection{First Application: Polynomial Ensembles}\label{subsec:PolynomialEnsembles}

\emph{Polynomial ensembles} recently introduced by \textsc{Kuijlaars}
and co-authors \cite{KS:2014,Kuijlaars:2015,CKW:2015,KKS:2015} are a specific kind of random matrix ensembles which have a certain structure for the joint density of their singular values. Hence they are defined via the density $\FSV\in L^{1,{\rm SV}}(A)$ instead of $f_G\in L^{1,K}(G)$. On the contrary, often the corresponding random matrix ensemble considered can be even not bi-unitarily invariant, e.g. see Refs.~\cite{Muirhead,ATLV:2004,RKGZ:2012,WWKS:2015} for the correlated Wishart and Jacobi ensemble or Ref.~\cite{FL:2015} for a product of  Ginibre ensembles times a single shifted Ginibre ensemble. Hence we have to apply the operators $\mathcal{K}_\Omega$ and $\mathcal{K}_A$, see Eqs.~\eqref{K-Om-def} and \eqref{K-A-def}, instead of  $\mathcal{I}_\Omega$ and $\mathcal{I}_A$. However once we have the joint density of the squared singular values we can apply Corollary~\ref{cor:conclusions} to think of another random matrix ensemble which is bi-unitarily invariant but sharing the same singular value statistics.

For this purpose we want to recall the definition of polynomial ensembles. Furthermore we introduce a subclass which has the nice property that the whole spectral statistics are governed by a single density on $\mathbb{R}_+$.

\begin{definition}[Polynomial Ensembles] \ \label{def:PE}

Fixing $n\in \mynat$ we choose $ n$ measurable functions $w_0,\hdots,w_{n-1}\in L^{1}_{[1,n]}(\mathbb{R}_+)$.
\begin{enumerate}[(a)]
\item 		The joint density $\FSV^{(n)}\in L^{1,{\rm SV}}(A)$ is called a \emph{polynomial ensemble} if and only if it satisfies the form~\cite{KS:2014,Kuijlaars:2015,CKW:2015}
\begin{align}
\label{eq:PE}
\FSV^{(n)}([w];a) := C_{\rm sv}^{(n)}[w]\Delta_n(a) \, \det[w_{j-1}(a_k)]_{j,k=1,\hdots,n}
\end{align}
with the normalization constant
\begin{equation}\label{norm:sv-gen}
\frac{1}{C_{\rm sv}^{(n)}[w]}:=n!\det\left[\int_0^\infty a^{k-1}w_{j-1}(a) da\right]_{j,k=1,\hdots,n}\in\mathbb{R}\setminus\{0\}.
\end{equation}
We underline that we allow in this definition signed densities, too.
\item		Assuming that there is a density $\omega\in L^{1,n-1}_{[1,n]}(\mathbb{R}_+)$ such that the two linear spans ${\rm span}\{w_0,\ldots,w_{n-1}\}$ and ${\rm span}\{\omega,(-a\partial_a)\omega,\ldots,(-a\partial_a)^{n-1}\omega\}$ agree then we call
\begin{align}
\label{eq:PE-der}
\FSV^{(n)}([\omega];a) := C_{\rm sv}^{(n)}[\omega]\Delta_n(a) \, \det[(-a_k\partial_{a_k})^{j-1}\omega(a_k)]_{j,k=1,\hdots,n}
\end{align}
with
\begin{equation}\label{norm:sv-der}
\frac{1}{C_{\rm sv}^{(n)}[\omega]}:=n!\det\left[\int_0^\infty a^{k-1} (-a\partial_{a})^{j-1}\omega(a) da\right]_{j,k=1,\hdots,n}\in\mathbb{R}\setminus\{0\}
\end{equation}
a \emph{polynomial ensemble of derivative type}.
\item		In the case that the function $\omega$ in (b) is a Meijer G-function, see Ref.~\cite{Abramowitz} for its definition, we call the ensemble a \emph{Meijer G-ensemble}.
\end{enumerate}
\end{definition}

Let us remark that the corresponding bi-unitarily invariant density $f_G^{(n)}[w]$ on $G$ can be easily obtained for a polynomial ensemble $\FSV^{(n)}[w]$,
\begin{equation}\label{FG-PE}
f_G^{(n)}([w];g)=C_{\rm sv}^{(n)}[w]\left(n!\prod_{j=0}^{n-1}\frac{(j!)^2}{\pi^{2j+1}}\right)\frac{\det\left[\tr\left((g^*g)^{b-1}w_{c-1}(g^*g)\right)\right]_{b,c=1,\ldots,n}}{\det[\tr(g^*g)^{b+c-2}]_{b,c=1,\ldots,n}}.
\end{equation}
For this result we multiplied the two determinants in the numerator as well as those in the denominator, e.g. $|\Delta_n(a)|^2=\det[\sum_{j=1}^n a_j^{b+c-2}]_{b,c=1,\ldots,n}$. For well-known ensembles like the Laguerre and the Jacobi ensemble one can easily show that it reduces to their standard representations. 
Note that the apparent singularities only appear for a degenerate spectrum of $g$ 
because the denominator corresponds to a squared Vandermonde determinant. However 
the zeros in the denominator cancel with those in the numerator,
since this determinant vanishes at these points, too.

\begin{examples}[Polynomial Ensembles of Derivative Types]\

We underline that quite a lot of the standard ensembles are Meijer G-ensembles. For example the induced Ginibre ensemble~\cite{Ginibre,Akemann-book,IK:2013} (also known as Laguerre, Wishart or chiral Gaussian unitary ensemble)
\begin{equation}\label{Gin-def}
 f_G^{(n)}([\omega_{\rm Lag}];g)\propto(\det g^*g)^\nu\exp[-\tr g^*g],\ \nu>-1,
\end{equation}
yields a Meijer G-ensemble with
\begin{equation}\label{om-Gin-def}
\omega_{\rm Lag}(a):=a^\nu e^{-a}
\end{equation}
because
\begin{eqnarray}\label{Delt-Gin}
\det[(-a_k\partial_{a_k})^{j-1}\omega_{\rm Lag}(a_k)]_{j,k=1,\ldots,n}=\Delta_n(a)(\det a)^\nu e^{-\tr a}.
\end{eqnarray}
The same is true for the induced Jacobi ensemble~\cite{ZS-trunc,Akemann-book,Forrester-book,IK:2013} 
(also known as the ensemble of truncated unitary matrices)
\begin{equation}\label{Jac-def}
 f_G^{(n)}([\omega_{\rm Jac}];g)\propto(\det g^*g)^\nu(\det(\eins_n-g^*g))^{\mu-n}\Theta(\eins_n-g^*g),\ \mu>n,\nu>-1,
\end{equation}
 yielding
\begin{equation}\label{om-Jac-def}
\omega_{\rm Jac}(a):=a^\nu (1-a)^{\mu-1}\Theta(1-a)
\end{equation}
and for the Cauchy-Lorentz ensemble~\cite{VKG-SUSY,Kieburg:2015,WWKS:2015}
\begin{equation}\label{CL-def}
 f_G^{(n)}([\omega_{\rm CL}];g)\propto\frac{(\det g^*g)^\nu}{(\det(\eins_n+g^*g))^{\nu+\mu+n}},\ \mu>n-1,\nu>-1,
\end{equation}
leading to
\begin{equation}\label{om-CL-def}
\omega_{\rm CL}(a):=\frac{a^\nu}{ (1+a)^{\nu+\mu+1}}.
\end{equation}
This can be seen by identities very similar to Eq.~\eqref{Delt-Gin}. The Heaviside step function $\Theta$ on matrix space is $1$ for positive definite matrices and vanishes otherwise. Note that the functions~\eqref{om-Gin-def}, \eqref{om-Jac-def} and \eqref{om-CL-def} are indeed Meijer G-functions~\cite{Abramowitz}.

Even the inverses of the ensembles above and their matrix products are in the class of Meijer G-ensembles, see Refs.~\cite{ARRS:2013,AB:2012,AIK:2013,AKW:2013,ABKN,Forrester:2014,IK:2013,KS:2014,KZ:2014,KKS:2015,AI:2015}. Therefore this class is already quite big,
and covers several important applications.

Indeed there are also other ensembles which are not Meijer G-ensembles but which are polynomial ensembles of derivative type. For example the choice
\begin{equation}\label{om-MB-def}
\omega_{\rm MB}(a):=a^\nu e^{-\alpha a^\theta},\ \alpha,\theta>0,\ \nu>-1,
\end{equation}
corresponds to the Laguerre-type of the Muttalib-Borodin ensemble~\cite{Muttalib, Borodin} with the joint probability density
\begin{equation}\label{MB-def}
\FSV^{(n)}([\omega_{\rm MB}];a)\propto\Delta_n(a)\Delta_n(a^\theta)(\det a)^\nu e^{-\alpha\tr a^\theta}.
\end{equation}
Only for integer $1/\theta$ the ensemble is a Meijer G-ensemble. One can even show that the Jacobi-type of the Muttalib-Borodin ensemble~\cite{Borodin,Forrester-Wang} falls into the class of polynomial ensembles of derivative type.

The limit of the Muttalib-Borodin ensemble for $\theta\to0$ can be modelled by~\cite{Forrester-Wang}
\begin{equation}\label{om-ln-def}
\omega_{\theta\to0}(a):=a^{\nu'} e^{-\alpha' ({\rm ln}\, a)^2},\ \alpha'=\theta^2\alpha/2>0,\ \nu'=\nu-\alpha\theta\in\mathbb{R},
\end{equation}
which leads to the joint probability density
\begin{equation}\label{ln-def}
\FSV^{(n)}([\omega_{\theta\to0}];a)\propto\Delta_n(a)\Delta_n({\rm ln}\, a)(\det a)^{\nu'} e^{-\alpha \tr({\rm ln}\, a)^2}.
\end{equation}
This ensemble is related to a particular ensemble studied in the theory of disorder conductors~\cite{Beenakker-Rejaei} 
which one obtains in the  limit of large eigenvalues~\cite{Forrester-Wang}.

For the opposite limit $\theta\to\infty$, we may use the transformation $a' = a^{\theta}$
to obtain the approximation
\begin{equation}\label{e-def}
\FSV^{(n)}([\omega_{ \theta\to\infty}];a')\propto\Delta_n(a')\Delta_n({\rm ln}\,a')(\det a')^{\nu''}e^{-\alpha \tr a'} \,,
\end{equation}
where $\nu'' := (\nu+1-\theta)/\theta>-1$.
It has the same kind of level repulsion for the singular values as the density in Eq.~\eqref{ln-def}.
However, the confining potential is different, namely $\tr a'$ instead of $\tr({\rm ln}\, a)^2$.
Moreover, the density in Eq.~\eqref{e-def} is not a polynomial ensemble of derivative type 
anymore, although it is still a polynomial ensemble.

The Muttalib-Borodin ensemble is also a good example that $\FSV$ may also correspond to a non-bi-unitarily invariant random matrix ensemble, see the recent work~\cite{Forrester-Wang} by \textsc{Forrester} and \textsc{Wang}. Certainly,
the relation between the eigenvalues and the singular values, see Theorem~\ref{thm:Map}, does not apply to 
these random matrix realizations. 
\end{examples}

Let us consider now what the induced joint eigenvalue density will look like
for a density $\FSV^{(n)}\in L^{1,{\rm SV}}(A)$ which is a (possibly signed)
polynomial ensemble of a derivative type.
To this end, we have only to apply Theorem~\ref{thm:Map}.

\begin{theorem}[Joint Density of Eigenvalues]\label{thm:pol-SEV}\

Let $\FSV^{(n)}[\omega]\in L^{1,K}$ be the density of a polynomial ensemble of derivative type
with the function $\omega\in L^{1,n-1}_{[1,n]}(\mathbb{R}_+)$,
i.e. suppose that $\FSV$ is of the form~\eqref{eq:PE-der},
and let $f_G$ be the corresponding bi-unitarily invariant density in $L^{1,K}(G)$.
Then the corresponding joint density of the eigenvalues 
is equal to
	\begin{equation}\label{pol-der-SEV}
	\FEV^{(n)}([\omega];z)=\frac{C_{\rm sv}^{(n)}[\omega]\prod_{j=0}^{n-1}j!}{\pi^n}|\Delta_n(z)|^2\prod_{j=1}^n \omega(|z_j|^2).
	\end{equation}	
Additionally we can say for any bi-unitarily invariant density $f_G\in L^{1,K}(G)$  that the corresponding joint density of the squared singular values $\FSV\in L^{1,{\rm SV}}(A)$ is a polynomial ensemble of derivative type~\eqref{eq:PE-der} if and only if the corresponding joint density of the eigenvalues $\FEV\in L^{1,{\rm EV}}(Z)$ has the form~\eqref{pol-der-SEV}, with $\omega\in L^{1,n-1}_{[1,n]}(\mathbb{R}_+)$.
\end{theorem}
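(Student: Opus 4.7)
The plan is to compute $\mathcal{R}\FSV^{(n)}[\omega]$ directly by substituting the explicit form \eqref{eq:PE-der} into the integral representation \eqref{R-rep} from Theorem~\ref{thm:Map}, and then to invoke the bijectivity of $\mathcal{R}$ for the converse implication. The key algebraic fact driving the computation is that three structures conspire perfectly: the Vandermonde $\Delta_n(a)$ in the definition of the polynomial ensemble of derivative type, the derivative differential operator $(-a\partial_a)^{j-1}$ acting on $\omega$, and the multi-Mellin structure in the definition of $\mathcal{R}$.

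After substitution, the $\Delta_n(a)$ from $\FSV^{(n)}[\omega]$ cancels the $\Delta_n(a)^{-1}$ under the inner integral of \eqref{R-rep}. Andr\'eief's identity then converts the product of two determinants into
\begin{equation*}
\int_A \det[(-a_k\partial_{a_k})^{j-1}\omega(a_k)]_{j,k} \det[a_b^{c+\imath s_c}]_{b,c}\prod_{j=1}^n\frac{da_j}{a_j}
= n!\det\Bigl[\int_0^\infty (-a\partial_a)^{j-1}\omega(a)\,a^{k-1+\imath s_k}\,da\Bigr]_{j,k}.
\end{equation*}
Applying the Mellin derivative identity \eqref{eq:mellin-link} entrywise (which is legitimate precisely because $\omega\in L^{1,n-1}_{[1,n]}(\mathbb{R}_+)$) yields $(k+\imath s_k)^{j-1}\mathcal{M}\omega(k+\imath s_k)$ in each entry. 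Pulling $\mathcal{M}\omega(k+\imath s_k)$ out of the $k$-th column leaves the Vandermonde $\det[(k+\imath s_k)^{j-1}]_{j,k}=\prod_{b<c}((c+\imath s_c)-(b+\imath s_b))$, which coincides with $\Delta_n(\varrho'+\imath s)$ because $\varrho'_c-\varrho'_b=c-b$. This cancels the $\Delta_n(\varrho'+\imath s)^{-1}$ still in \eqref{R-rep}, and one is left with the factorized quantity $C_{\rm sv}^{(n)}[\omega]\cdot n!\prod_{k=1}^n \mathcal{M}\omega(k+\imath s_k)$.

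To finish, expand $\operatorname{Perm}[|z_b|^{-2(c+\imath s_c)}]_{b,c}=\sum_{\pi\in\mathbb{S}_n}\prod_b |z_b|^{-2(\pi(b)+\imath s_{\pi(b)})}$ and relabel integration variables $s_k\to s_{\pi^{-1}(k)}$ in each of the $n!$ resulting terms; this is permitted because $\zeta_1(\epsilon s)$ and the product $\prod_k \mathcal{M}\omega(k+\imath s_k)\,ds_k$ are individually invariant under the relabelling once both factors in the integrand are carried along. The $n$-dimensional integral then factorizes into $n$ identical one-dimensional integrals, each of which is by definition the regularized Mellin inversion of $\omega$ at the point $|z_b|^2$ with $d=b\in\{1,\dots,n\}\subset\mathbb{I}$, so Lemma~\ref{lem:Mel-Inv} gives $\omega(|z_b|^2)$. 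Collecting the $n!$ identical contributions and the prefactors from \eqref{R-rep} yields exactly the right-hand side of \eqref{pol-der-SEV}. The converse direction is immediate: if $\FEV$ has the form \eqref{pol-der-SEV} with some $\omega\in L^{1,n-1}_{[1,n]}(\mathbb{R}_+)$, then the forward direction produces $\mathcal{R}\FSV^{(n)}[\omega]=\FEV$, and since $\mathcal{R}$ is a bijection on $L^{1,\mathrm{SV}}(A)\to L^{1,\mathrm{EV}}(Z)$ by Theorem~\ref{thm:Map}, we conclude $\FSV=\FSV^{(n)}[\omega]$ lies in the class of polynomial ensembles of derivative type.

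The main obstacle is not algebraic but analytic, namely the justification of interchanging the $\epsilon\to0$ limit with the $s$-integration and with the permanent expansion, and of factorizing the $n$-fold regularized integral into a product. This is precisely where the regularity class $L^{1,n-1}_{[1,n]}(\mathbb{R}_+)$ earns its keep: it guarantees that $\mathcal{M}\omega$ extends analytically and is bounded on each vertical line $\{k+\imath\mathbb{R}\}$ for $k=1,\dots,n$, that $(-a\partial_a)^{j-1}\omega$ is defined and integrable against the required monomials so that Andr\'eief applies in the Lebesgue sense, and that each one-dimensional regularized Mellin inversion in the final step falls under the scope of Lemma~\ref{lem:Mel-Inv}. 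The product structure of $\zeta_1(\epsilon s)=\prod_j \pi^2\cos(\epsilon s_j)/(\pi^2-4\epsilon^2 s_j^2)$ is essential to make the factorization into one-dimensional pieces work; had we used any $\zeta_l$ with $l>1$ we would need additional care, but $\zeta_1$ is all that appears in $\mathcal{R}$.
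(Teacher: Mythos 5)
Your proof is correct and follows essentially the same route as the paper: substitute the explicit form of $\FSV^{(n)}[\omega]$ into \eqref{R-rep}, perform the $a$-integral via Andr\'eief and the Mellin derivative identity \eqref{eq:mellin-link} to obtain $n!C_{\rm sv}^{(n)}[\omega]\prod_j\mathcal{M}\omega(j+\imath s_j)$, then invoke Mellin inversion for the $s$-integral, and use bijectivity of $\mathcal{R}$ for the converse. The only difference is cosmetic: where the paper simply cites ``the multivariate Mellin inversion formula'' \eqref{mult-MellinInversion}, you spell out the permanent expansion, the relabelling $s_k\to s_{\pi^{-1}(k)}$, and the factorization into $n$ single-variable applications of Lemma~\ref{lem:Mel-Inv}.
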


\begin{proof}
The operators and, thus, the integrals in the Theorem~\ref{thm:Map} are well defined for a polynomial ensemble of derivative type $\FSV^{(n)}[\omega]$, since it is Lebesgue integrable and symmetric by definition. The calculation is straightforward and we begin with performing the integral over $a$,
 \begin{eqnarray}\label{pol-calc-a}
 &&\int_A \FSV^{(n)}([\omega];a)\frac{\det[a_b^{c+\imath s_c}]_{b,c=1,\ldots,n}}{\Delta_n(\varrho'+\imath s)\Delta_n(a)}\frac{da}{\det a}\\
 &=&\frac{C_{\rm sv}^{(n)}[\omega]}{\Delta_n(\varrho'+\imath s)}\int_A \det[(-a_k\partial_{a_k})^{j-1}\omega(a_k)]_{j,k=1,\hdots,n} \, \det[a_b^{c+\imath s_c}]_{b,c=1,\ldots,n}\frac{da}{\det a}\nonumber\\
 &=&n!C_{\rm sv}^{(n)}[\omega]\frac{\det[(b+\imath s_b)^{c-1}\mathcal{M}\omega(b+\imath s_b)]_{b,c=1,\ldots,n}}{\Delta_n(\varrho'+\imath s)}\nonumber\\
 &=&n!C_{\rm sv}^{(n)}[\omega]\prod_{j=1}^n\mathcal{M}\omega(j+\imath s_j).\nonumber
 \end{eqnarray}
Here we have used the Andr\'eief formula~\cite{Andreief} and Eq.~\eqref{eq:mellin-link} and pulled the factors $\mathcal{M}\omega(b+\imath s_b)$ out of the rows of the determinant in the numerator which, then, becomes the Vandermonde determinant $\Delta_n(\varrho'+\imath s)$. The second integral in Eq.~\eqref{R-rep} in the variables $s$ is the multivariate Mellin inversion formula which yields the result~\eqref{pol-der-SEV}.

The remaining statement of the theorem is immediate since for bi-unitarily invariant ensembles the operator $\mathcal{R}$ is bijective and, thus, invertible.
\end{proof}

For random matrices from a polynomial ensembles of derivative type,
we can deduce the following consequence.

\begin{corollary}[Joint Density of Eigenvalues for Random Matrices]\label{cor:pol-sv-ev}\

Let $a\in A$ be a random matrix drawn from a polynomial ensemble of derivative type, 
with the function $\omega\in L^{1,n-1}_{[1,n]}(\mathbb{R}_+)$,
let $k_0$ be a fixed unitary matrix, and let $k$ be a random unitary matrix
with the normalized Haar measure on ${\rm U}(n)$ which is independent of $a$.
Then the induced eigenvalue density of each of the matrices
$k_0 a k$ and $k a k_0$ is given by Eq.~\eqref{pol-der-SEV}.
\end{corollary}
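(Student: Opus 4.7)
The plan is to reduce the corollary to a direct combination of Corollary~\ref{cor:conclusions} and Theorem~\ref{thm:pol-SEV}, since all the heavy lifting has already been done. Because singular values are invariant under left and right multiplication by unitary matrices, both random matrices $k_0 a k$ and $k a k_0$ have the same joint density of squared singular values as $a$ itself, namely the polynomial ensemble of derivative type $\FSV^{(n)}([\omega];\cdot)$ from Eq.~\eqref{eq:PE-der}. Hence the task is to show that their induced joint eigenvalue densities coincide with the right-hand side of Eq.~\eqref{pol-der-SEV}.

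First, I would invoke Corollary~\ref{cor:conclusions}(b) and (c), which identify the eigenvalue density of $k_0 a k$ and $k a k_0$ with that of $k_1 a k_2$, where $k_1, k_2 \in K$ are independently Haar-distributed and independent of $a$. The random matrix $k_1 a k_2$ is bi-unitarily invariant by construction, so the corresponding matrix density $f_G$ lies in $L^{1,K}(G)$ and fits into the framework of Theorem~\ref{thm:pol-SEV}. Moreover, by the same unitary invariance of singular values, the squared singular value density of $k_1 a k_2$ equals $\FSV^{(n)}([\omega];\cdot)$, which is a polynomial ensemble of derivative type by assumption.

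Next, I would apply Theorem~\ref{thm:pol-SEV} directly to $f_G$: since its singular value density is of the form~\eqref{eq:PE-der} with weight $\omega \in L^{1,n-1}_{[1,n]}(\mathbb{R}_+)$, the induced eigenvalue density is given by Eq.~\eqref{pol-der-SEV}. Combining this with the first step yields the claimed equality of eigenvalue densities for $k_0 a k$ and $k a k_0$.

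I do not foresee a genuine obstacle here, since the argument is essentially a two-line composition of already-proved statements; the only thing to double-check is that the $L^1$-integrability and symmetry hypotheses required by Theorem~\ref{thm:pol-SEV} are automatically inherited by $k_1 a k_2$ from $a$. This is immediate because the Haar integration is a bounded linear operation and preserves the $L^1$-norm, and symmetry of $\FSV$ is built into the polynomial ensemble definition. The special case $k_0 = \eins_n$ needs no separate argument, being a particular instance of the general fixed-$k_0$ statement.
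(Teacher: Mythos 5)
Your proposal is correct and takes essentially the same approach as the paper, which dispatches the corollary in one line by observing that it follows from Theorem~\ref{thm:pol-SEV} exactly as Corollary~\ref{cor:conclusions} follows from Theorem~\ref{thm:Map}. Your intermediate remark about the squared singular value densities of $k_0ak$ and $kak_0$ agreeing with that of $a$ is a slight detour (Corollary~\ref{cor:conclusions}(b),(c) already delivers $\FEV=\mathcal{R}\FSV$ for these matrices, so one may pass directly to Theorem~\ref{thm:pol-SEV} to evaluate $\mathcal{R}\FSV^{(n)}[\omega]$), but it does no harm and the composition of cited results is valid.
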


\begin{proof}
This follows from Theorem \ref{thm:pol-SEV} in the same way
as Corollary \ref{cor:conclusions} follows from Theorem \ref{thm:Map}.
\end{proof}

\begin{remark}[Correspondence between Singular Value and Eigenvalue Statistics]
\label{rem:correspondence}
Theorem \ref{thm:pol-SEV} unveils
a remarkably simple correspondence between
the (squared) singular value and eigenvalue densities 
for a wide class of \emph{bi-unitarily invariant} random matrix ensembles on $G=\GL(n,\mycmplx)$. 
For example, for the Muttalib-Borodin ensembles~\eqref{MB-def}, \eqref{ln-def} and \eqref{e-def},
this result is completely new. 
In general we solved in one strike a large number of integrals of the form
\begin{eqnarray}
 \mathcal{T}f_G^{(n)}([\omega];z)&\propto& |\Delta_n(z)|^2\left(\prod_{j=1}^{n} |z_j|^{2(n-j)}\right)\\
&&\times\int_{T}\frac{\det\left[\tr\left((t^*|z|^2t)^{b-1}(-a\partial_a)^{c-1}\omega|_{a=t^*|z|^2t})\right)\right]_{b,c=1,\ldots,n}}{\det[\tr(t^*|z|^2t)^{b+c-2}]_{b,c=1,\ldots,n}} dt.\nonumber
\end{eqnarray}
These integrals are by far non-trivial. For the particular case of $\omega(a)=a^\nu e^{-a^2}$ this becomes
\begin{eqnarray}
 \mathcal{T}f_G^{(n)}([a^\nu e^{-a^2}];z)&\propto& |\Delta_n(z)|^2\left(\prod_{j=1}^{n} |z_j|^{2(n-j+\nu)-1}\right)\\
&&\times\int_{T}\sqrt{\det(t^*|z|^2t\otimes\eins_n+\eins_n\otimes t^*|z|^2t)}e^{-\tr(t^*|z|^2t)^2} dt.\nonumber
\end{eqnarray}
This can be readily checked by the identity $\Delta_n(a^2)/\Delta_n(a)=\prod_{1\leq b<c\leq n}(a_b+a_c)=\sqrt{\det(a\otimes\eins_n+\eins_n\otimes a)}(\det 2a)^{-1/2}$. 

There is, however, one flaw regarding this correspondence between $\FSV^{(n)}[\omega]$ and $\FEV^{(n)}[\omega]$.
While a probability measure on the singular values always 
induces a probability measure on eigenvalues, a probability measure on the ``seeming'' eigenvalues of a random matrix ensemble does not need 
to correspond to a probability measure on the singular values.
Note that for a given joint probability density of the singular values,
there always exists an associated random matrix probability density, and hence a joint probability density for the eigenvalues.
The problem for the reverse direction is that the spherical inversion may result in
a \emph{signed} density for the singular values. As a simple example we consider a polynomial ensemble of derivative type with $\omega\geq0$ being a non-negative function. Then the corresponding $\FEV$, see Eq.~\eqref{pol-der-SEV}, is always a probability density (we will check the normalization in section~\ref{sec:Implications}). However the joint density of the squared singular values $\FSV$, see Eq.~\eqref{eq:PE-der}, is not necessarily positive. Indeed for the change of variables such that $\omega(a)=\exp[-V({\rm ln}\,a)]$ with $V$ the so called potential and the particular choice $n=2$, positivity of $\FSV^{(n=2)}$ implies the condition $(a_b-a_c)(V'({\rm ln}\,a_b)-V'({\rm ln}\,a_c))$ either non-negative or non-positive for all $a_b,a_c\in\mathbb{R}_+$. Hence $V$ has to be convex  everywhere on $\mathbb{R}$ due to the integrability of $\omega$.
\end{remark}

\begin{examples}[Generalizations to other polynomials]\

 We want to conclude this subsection by showing explicit relations between the joint densities $\FSV$ and $\FEV$ for other bi-unitarily invariant ensembles.
 \begin{enumerate}[(a)]
  \item		Considering a general polynomial ensemble of the form~\eqref{eq:PE} we cannot simplify all integrations as nicely as for the polynomials ensembles of derivative type. For instance, the joint density of the eigenvalues of $ak$ with $k\in K$ a Haar distributed unitary matrix and $a\in A$ an independent positive diagonal matrix drawn from the joint density $\FSV^{(n)}[w]$, a polynomial ensemble associated with the functions $w_0,\hdots,w_{n-1}\in L^{1}_{[1,n]}(\mathbb{R}_+)$, is given by
  \begin{eqnarray}
   \FEV^{(n)}([w];z)&=&\frac{C_{\rm sv}^{(n)}[w]\prod_{j=0}^{n-1}j!}{n!\pi^n}|\Delta_n(z)|^2\lim_{\epsilon\to0}\int_{\mathbb{R}^n}\zeta_1(\epsilon s){\rm Perm}\bigl[|z_b|^{-2c-2\imath s_c}\bigl]_{b,c=1,\ldots,n}\nonumber\\
 &&\times\frac{\det[\mathcal{M} w_{b-1}(c+\imath s_c)]_{b,c=1,\ldots,n}}{\Delta_n(\varrho'+\imath s)}\prod_{j=1}^n\frac{ds_j}{2\pi}.\label{pol-gen-SEV}
  \end{eqnarray}
  Alas, the remaining integral over $s$ cannot be easily performed in general.
  \item		Quite often we cannot reduce a polynomial ensemble to one of derivative type but we are ``very close" to it. For example the deformation
  \begin{equation}\label{ext-PE-ex}
  \FSV(a)\propto \det(a+\alpha_c\eins_n)^m e^{-\tr a} |\Delta_n(a)|^2
  \end{equation}
   with $\alpha\in\mathbb{C}$ a fixed complex variable and $m\in\mathbb{N}$ is no polynomial ensembles of derivative type. However we can generate this ensemble by a linear combination of densities
 with the following structure
  \begin{equation}\label{ext-PE-der}
  \FSV^{(n,m)}([\omega];a,B)\propto \Delta_n(a)\det[D[\omega],\ B],
  \end{equation}
  where $\omega\in L^{1,n+m-1}_{[1,n]}(\mathbb{R}_+)$,
  \begin{equation}\label{D-def}
   D[\omega]=\{(-a_c\partial_{a_c})^{b-1}\omega(a_c)\}_{\substack{b=1,\ldots,n+m \\ c=1,\ldots,n}}
  \end{equation}
  and
  \begin{equation}\label{B-def}
   B=\{\gamma_c^{b-1}\}_{\substack{b=1,\ldots,n+m \\ c=1,\ldots,m}}
  \end{equation}
   an $(n+m)\times m$ matrix of $m$ constant variables $\gamma_1,\ldots,\gamma_m\in\mathbb{C}$. We can expand in the parameters $\gamma_c$ to create the desired polynomial ensembles. However a direct relation between Eq.~\eqref{ext-PE-der} and the original density~\eqref{ext-PE-ex} is for an arbitrary $m\in\mathbb{N}$ non-trivial.
   
   A straightforward calculation involving a generalization of Andr\'eief's integra\-tion theorem, see Ref.~\cite[Appendix C.1.]{KG-det}, yields the joint density of the eigenvalues of the matrix $ak$, where $k\in K$ is again a Haar distributed unitary matrix and $a\in A$ is independent and distributed via the density~\eqref{ext-PE-der},
  \begin{eqnarray}
   \FEV^{(n,m)}([w];z)&\!\propto\!&\Delta_m(\gamma)|\Delta_n(z)|^2\prod_{j=1}^n \left(\left.\prod_{l=1}^m(\gamma_l+a\partial_a)\omega(a)\right|_{a=|z_j|^2}\right).\label{pol-ext-SEV}
  \end{eqnarray}
  Here we skipped the normalization.
  \item		We can also apply the second result~\eqref{R-inv-rep} of Theorem~\ref{thm:Map} and assume a joint density of the eigenvalues of a bi-unitarily invariant ensemble. A most natural generalization of the result for the polynomial ensembles of derivative type would be
  \begin{equation}\label{ev-ext}
  \FEV^{(n)}(z)\propto|\Delta_n(z)|^2{\rm Perm}[ \omega_b(|z_c|^2)]_{b,c=1,\ldots,n}
  \end{equation}
  $\omega_1,\ldots,\omega_n\in L^{1,n}_{[1,n]}(\mathbb{R}_+)$. Then the  singular value statistics of the corresponding bi-unitarily invariant ensemble reads
  \begin{equation}\label{ev-sv-ext}
  \FSV^{(n)}(z)\propto\Delta_n(a)\sum_{\sigma\in\mathbb{S}_n}\det[(-a_k\partial_{a_k})^{j-1}\omega_{\sigma(j)}(a_k)]_{j,k=1,\hdots,n}.
  \end{equation}
  The sum over the symmetric group encodes the former permanent. We skip the calculation since it is again straightforward.
\end{enumerate}
\end{examples}

\subsection{Breaking the bi-unitary invariance}\label{subsec:break}

Up to now we  mainly considered bi-unitarily invariant random matrix ensembles. However quite often one considers deformations of the ensemble breaking the bi-unitary invariance. A particular kind of deformations we want to consider is with the help of the following ensembles.

\begin{definition}[$G$-adjoint-invariant Deformations]\label{def:deform}\

 A $G$-adjoint-invariant deformation of a bi-unitarily invariant ensemble
 is a random matrix ensemble on $G$ whose density is given by
 \begin{equation}\label{def-deform}
  f_G:=f^{(K)}_G D_G\in L^1(G)
 \end{equation}
with $f_G^{(K)}\in L^{1,K}(G)$ and $D_G$ a function on $G$ which is $G$-adjoint-invariant, 
 i.e. $D_G(h^{-1}gh)=D_G(g)$ for all $h,g\in G$.
\end{definition}  

We underline that for a $G$-adjoint-invariant function we have $D_G(k^*gk)=D_G(g)$ for all $k\in K$ and $g\in G$ and
$D_G(zt)=D_G(z)$ for all $t\in T$ and $z\in Z$. The first statement is obvious because $k^*=k^{-1}$ and $K\subset G$. 
The second statement becomes clear after noticing that $zt$ has the eigenvalues $z$. Hence after an eigendecomposition $zt=h^{-1}zh$ with $h\in G$ the statement follows immediately. More generally, a function $f(g)$ on $G$ is 
$G$-adjoint-invariant if and only if it is a function of the eigenvalues of $g$.
Then we can prove the following theorem about the joint densities of the eigenvalues and singular values 
of this particular kind of deformation.

\begin{theorem}[Relation for $G$-adjoint-invariant Deformations]\label{thm:deform}\

 Let $f_G=f^{(K)}_G D_G\in L^1(G)$ be a $G$-adjoint-invariant deformation of a bi-unitarily invariant ensemble. 
 Then the joint density of the eigenvalues is equal to
 \begin{equation}\label{deform-ev}
  \FEV(z)=\mathcal{T}f_G(z)=D_G(z)\ \mathcal{T}f_G^{(K)}(z)
 \end{equation}
 and the joint density of the squared singular values is
 \begin{equation}\label{deform-sv}
  \FSV(a)=\mathcal{K}_A\mathcal{K}_\Omega f_G(a)=\left(\int_K D_G(\sqrt{a}k)d^*k\right) \mathcal{I}_A\mathcal{I}_\Omega f_G^{(K)}(a).
 \end{equation}
 In particular the bi-unitarily invariant parts are still related by
 \begin{equation}\label{Rel-deform}
  \mathcal{T}f_G^{(K)}(z)=\mathcal{R}\mathcal{I}_A\mathcal{I}_\Omega f_G^{(K)}(z),\ 
  \mathcal{I}_A\mathcal{I}_\Omega f_G^{(K)}(a)=\mathcal{R}^{-1}\mathcal{T}f_G^{(K)}(a).
 \end{equation}
\end{theorem}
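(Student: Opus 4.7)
My plan is to exploit two structural facts in turn: first, that a $G$-adjoint-invariant function depends only on the eigenvalues of its argument (so it is invariant both under unitary conjugation and under right multiplication by elements of $T$), and second, that $f_G^{(K)}$ is bi-unitarily invariant. These two facts will let the factor $D_G$ pass through each of the relevant operators either as a constant or as a simple multiplicative term.

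For the eigenvalue formula, I would start from the Schur representation~\eqref{rel-ev-2} which underlies the definition~\eqref{T-def} of $\mathcal{T}$, so that
\begin{equation*}
\mathcal{T} f_G(z) \;\propto\; |\Delta_n(z)|^2 \Bigl(\prod_{j=1}^{n} |z_j|^{2(n-j)}\Bigr) \int_T \int_K f_G^{(K)}(k^* z t k) \, D_G(k^* z t k) \, d^*k \, dt.
\end{equation*}
Since $D_G$ is $G$-adjoint-invariant, $D_G(k^* z t k) = D_G(zt)$, and since the upper-triangular matrix $zt$ has the same diagonal (and hence the same eigenvalues) as $z$, we further get $D_G(zt) = D_G(z)$. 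Thus $D_G(z)$ pulls out of both integrals, leaving exactly $D_G(z) \, \mathcal{T} f_G^{(K)}(z)$, which is Eq.~\eqref{deform-ev}.

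For the singular value formula I would apply $\mathcal{K}_\Omega$ and $\mathcal{K}_A$ in turn. Using bi-unitary invariance of $f_G^{(K)}$ in~\eqref{K-Om-def},
\begin{equation*}
\mathcal{K}_\Omega f_G(y) \;=\; c_\Omega f_G^{(K)}(\sqrt{y}) \int_K D_G(k\sqrt{y}) \, d^*k \;=\; \mathcal{I}_\Omega f_G^{(K)}(y) \int_K D_G(k\sqrt{y}) \, d^*k,
\end{equation*}
where $c_\Omega$ is the constant from~\eqref{I-Om-def}. Next I would show that the map $y \mapsto \int_K D_G(k\sqrt{y}) \, d^*k$ is itself $K$-invariant on $\Omega$: writing $\sqrt{k_0^* y k_0} = k_0^* \sqrt{y} k_0$ and substituting $k \mapsto k k_0^*$ in the Haar integral, then using $G$-adjoint-invariance $D_G(k \sqrt{y} k_0) = D_G(k_0 k \sqrt{y})$, followed by another Haar substitution, restores the integral to its value at $y$. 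Consequently $\mathcal{K}_\Omega f_G$ is a product of two $K$-invariant functions on $\Omega$, and the outer operator $\mathcal{K}_A$ from~\eqref{K-A-def} then reduces to $\mathcal{I}_A$ on this product, yielding precisely Eq.~\eqref{deform-sv}.

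Equation~\eqref{Rel-deform} is then immediate: $f_G^{(K)} \in L^{1,K}(G)$, so Theorem~\ref{thm:Map} applies verbatim and gives $\mathcal{T} f_G^{(K)} = \mathcal{R}(\mathcal{I}_A \mathcal{I}_\Omega f_G^{(K)})$ together with its inverse. The only step requiring some care is the invariance claim for $\int_K D_G(k\sqrt{y}) \, d^*k$ — this is where one must combine the two distinct invariances (of $K$ as a subgroup of $G$, and of $D_G$ under full $G$-conjugation) rather than merely one of them; everything else is a direct application of the definitions in Section~\ref{sec:Preliminaries}.
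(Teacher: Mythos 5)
Your proof is correct and follows essentially the same route as the paper: both use the Schur decomposition for the eigenvalue part (with the observation that $D_G(k^*ztk)=D_G(zt)=D_G(z)$), and both use bi-unitary invariance of $f_G^{(K)}$ together with the $G$-adjoint-invariance of $D_G$ and Haar-invariance substitutions for the singular value part. The only difference is organizational: the paper massages the full double $K$-integral in $\mathcal{K}_A\mathcal{K}_\Omega$ at once (via $\sqrt{k_2^*ak_2}=k_2^*\sqrt{a}k_2$ and the substitution $k_2k_1k_2^*\mapsto k$), whereas you compute $\mathcal{K}_\Omega f_G$ first, verify $K$-invariance of $y\mapsto\int_K D_G(k\sqrt{y})\,d^*k$ separately, and then observe that $\mathcal{K}_A$ restricted to $K$-invariant functions coincides with $\mathcal{I}_A$ — the same computation, just factored differently. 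One harmless cosmetic point: you end with $\int_K D_G(k\sqrt{a})\,d^*k$ while the theorem is stated with $\int_K D_G(\sqrt{a}k)\,d^*k$; these agree because $D_G(gh)=D_G(hg)$ for any $g,h\in G$ by $G$-adjoint-invariance, which is worth a one-line remark.
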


Before we prove this theorem let us underline that the resulting joint density~\eqref{deform-sv} for the singular values can indeed be considered by itself as a density corresponding to a bi-unitarily invariant random matrix ensemble. Therefore the relation between the random matrix ensembles and those ensembles defined by their singular value density is by far unique. This does not contradict Corollary~\ref{cor:conclusions}. The relation between densities on $G$ and those on $A$ only becomes unique when assuming additional properties such as bi-unitary invariance.

\begin{proof}
The calculation is based on Eq.~\eqref{rel-comp} with the definitions~\eqref{T-def}, \eqref{K-Om-def} and \eqref{K-A-def}.  For the eigenvalues we have
\begin{eqnarray}\label{deform-calc-a}
\FEV(z)&=&\left(\frac{1}{n!}\prod_{j=0}^{n-1}\frac{\pi^{j}}{j!}\right)|\Delta_n(z)|^2\left(\prod_{j=1}^n|z_j|^{2(n-j)}\right)\\
&&\times\int_{T}\left(\int_{K} f_G^{(K)}(k^* z t k) D_G(k^*ztk) d^*k\right) dt\nonumber\\
&=&D_G(z)\left(\frac{1}{n!}\prod_{j=0}^{n-1}\frac{\pi^{j}}{j!}\right)|\Delta_n(z)|^2\left(\prod_{j=1}^n|z_j|^{2(n-j)}\right)\nonumber\\
&&\times\int_{T}\left(\int_{K} f_G^{(K)}(k^* z t k)  d^*k\right) dt.\nonumber
\end{eqnarray} 
The $t$ and $k$-dependence of $D_G$ drops out because of the $G$-adjoint-invariance, see the paragraph before 
Theorem~\ref{thm:deform}. The remaining integral is equal to $\mathcal{T}f_G^{(K)}$.

The joint density of the squared singular values is equal to
\begin{eqnarray}\label{deform-calc-b}
\FSV(a)&\mskip-11mu=\mskip-11mu&\left(\frac{1}{n!}\prod_{j=0}^{n-1}\frac{\pi^{2j+1}}{(j!)^2}\right)|\Delta_n(a)|^2\\
&&\qquad\times\int_K\left(\int_K f_G^{(K)}(k_1 \sqrt{k_2^*ak_2})D_G(k_1 \sqrt{k_2^*ak_2})d^*k_2\right) d^*k_1\nonumber\\
&\mskip-11mu=\mskip-11mu&\left(\frac{1}{n!}\prod_{j=0}^{n-1}\frac{\pi^{2j+1}}{(j!)^2}\right)|\Delta_n(a)|^2f_G^{(K)}(\sqrt{a})\int_K\left(\int_K D_G(\sqrt{a}k_2k_1k_2^*)d^*k_2\right) d^*k_1.\nonumber
\end{eqnarray} 
In this calculation we have used three ingredients. First, we have $\sqrt{k_2^*ak_2}=k_2^*\sqrt{a}k_2$ for all $k_2\in K$ by the spectral theorem. Second, the bi-unitary invariance of $ f_G^{(K)}$ yields $ f_G^{(K)}(k_1 \sqrt{k_2^*ak_2})= f_G^{(K)}(\sqrt{a})$ for all $k_1,k_2\in K$. And third, the $G$-adjoint-invariance of $D_G$ implies a $K$-invariance via the adjoint action such that $D_G(k_1 \sqrt{k_2^*ak_2})=D_G(k_1k_2^* \sqrt{a}k_2)=D_G(\sqrt{a}k_2k_1k_2^*)$. Furthermore $k_2^*k_1k_2=k\in K$ is also a Haar distributed unitary matrix. Hence we obtain Eq.~\eqref{deform-sv}.

The last statement~\eqref{Rel-deform} immediately follows from the fact that $f_G^{(K)}\in L^{1,K}(G)$ 
where the operators in the commutative diagram~\eqref{diagram} are defined.
\end{proof}

Starting from Theorem~\ref{thm:deform}, we directly obtain the corresponding joint density for the singular values of ensembles which are usually studied in the context of normal matrices, e.g. see Refs.~\cite{CZ:1998,TBAZW:2005,BK:2012}. Those normal matrices share the same eigenvalue densities of the form
\begin{equation}\label{normal}
\FEV(z)\propto|\Delta_n(z)|^2\prod_{j=1}^n\left(\omega(|z_j|^2)|\chi(z_j)|^2\right),
\end{equation}
where $\omega\in L^{1,n-1}_{[1,n]}(\mathbb{R}_+)$ and $\chi:\mathbb{C}\rightarrow\mathbb{C}$ an entire function such that \begin{equation}\label{cond-norm}
\int_{\mathbb{C}} |z|^{b}\omega(|z|^2)|\chi(z)|^2dz<\infty\ {\rm for\ all}\ b=0,\ldots,2n-2.
\end{equation}
The main difference between normal matrix ensembles and bi-unitarily invariant matrix ensembles is the level repulsion of the singular values. For normal matrices we commonly do not have no level repulsion while for bi-unitarily invariant matrices this is usually the case. Thus the singular values of bi-unitarily matrices spread much stronger than those of normal matrices.

There is a natural bi-unitarily invariant matrix model associated to the joint density~\eqref{normal} via Eq.~\eqref{Rel-deform} which is the content of our next result.

\begin{corollary}[$\FSV$ of $G$-adjoint-invariant Deformations]\label{cor:deform}\

 Consider the $G$-adjoint-invariant deformation
 \begin{equation}\label{norm-deform}
  f_G(g)=|\det\chi(g)|^2\ \mathcal{I}_\Omega^{-1}\mathcal{I}_A^{-1}\FSV^{(n)}([\omega];g)
\end{equation}
of the bi-unitarily invariant ensemble $\mathcal{I}_\Omega^{-1}\mathcal{I}_A^{-1}\FSV^{(n)}[\omega]$ with $\FSV^{(n)}[\omega]$ given by Eq.~\eqref{eq:PE-der} and $\det\chi(g)=\prod_{j=1}^n\chi(z_j(g))$ with $\chi$ an entire function as in Eq.~\eqref{cond-norm} and $z_j(g)$, $j=1,\ldots,n$, the complex eigenvalues of $g$. Then the joint density of the eigenvalues is given by Eq.~\eqref{normal} and the one of the singular values is
\begin{equation}\label{sv-norm-deform}
\FSV(a)=\left(\int_K |\det\chi(\sqrt{a}k)|^2 d^*k\right)\FSV^{(n)}([\omega];a).
\end{equation}
\end{corollary}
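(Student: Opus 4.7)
The plan is to recognize this corollary as a direct application of Theorem~\ref{thm:deform} in combination with Theorem~\ref{thm:pol-SEV}. First, I will verify that the density $f_G$ in Eq.~\eqref{norm-deform} fits the framework of Definition~\ref{def:deform}, namely that it factorizes as $f_G = f_G^{(K)} D_G$ with $f_G^{(K)} := \mathcal{I}_\Omega^{-1}\mathcal{I}_A^{-1}\FSV^{(n)}[\omega] \in L^{1,K}(G)$ bi-unitarily invariant and $D_G(g) := |\det\chi(g)|^2 = \prod_{j=1}^n |\chi(z_j(g))|^2$ $G$-adjoint-invariant. The second point is the only one that is not cosmetic: since $z_j(g)$ denotes the (unordered multiset of) eigenvalues of $g$, and eigenvalues are invariant under conjugation $g \mapsto h^{-1}gh$, the function $D_G$ is indeed $G$-adjoint-invariant as required.

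Once this setup is in place, the formula~\eqref{deform-ev} from Theorem~\ref{thm:deform} gives $\FEV(z) = D_G(z) \, \mathcal{T} f_G^{(K)}(z)$. Evaluated at a diagonal $z \in Z$ the eigenvalues of $z$ are just its diagonal entries, so $D_G(z) = \prod_{j=1}^n |\chi(z_j)|^2$. For the factor $\mathcal{T} f_G^{(K)}$ I invoke Theorem~\ref{thm:pol-SEV}, whose hypotheses are met since the squared singular value density of $f_G^{(K)}$ is by construction the polynomial ensemble of derivative type $\FSV^{(n)}[\omega]$. This yields
\begin{equation*}
\mathcal{T} f_G^{(K)}(z) = \frac{C_{\rm sv}^{(n)}[\omega]\prod_{j=0}^{n-1}j!}{\pi^n} |\Delta_n(z)|^2 \prod_{j=1}^n \omega(|z_j|^2),
\end{equation*}
and multiplying by $D_G(z)$ gives precisely the form~\eqref{normal}, up to an overall normalization that is fixed by~\eqref{cond-norm}.

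For the singular value density I will apply the formula~\eqref{deform-sv} from the same theorem:
\begin{equation*}
\FSV(a) = \left(\int_K D_G(\sqrt{a}\,k)\,d^*k\right)\mathcal{I}_A \mathcal{I}_\Omega f_G^{(K)}(a).
\end{equation*}
Since $\mathcal{I}_A \mathcal{I}_\Omega$ inverts $\mathcal{I}_\Omega^{-1}\mathcal{I}_A^{-1}$ on $L^{1,K}(G)$ by definition of $f_G^{(K)}$, the second factor reduces to $\FSV^{(n)}([\omega];a)$, and substituting $D_G = |\det\chi|^2$ yields Eq.~\eqref{sv-norm-deform} verbatim.

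The only step that requires a moment of thought is the $G$-adjoint-invariance of $D_G$, because $\chi$ itself is not a class function on $G$ but a scalar function on $\mathbb{C}$; the invariance comes solely from the fact that it enters $D_G$ through a symmetric function of the eigenvalue multiset. Everything else is a bookkeeping exercise of pulling the previously proved Theorems~\ref{thm:deform} and~\ref{thm:pol-SEV} off the shelf, so no further technical obstacle is expected.
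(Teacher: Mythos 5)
Your proposal is correct and follows essentially the same route as the paper's own proof: recognize $f_G$ as a $G$-adjoint-invariant deformation with $D_G(g)=|\det\chi(g)|^2$, note that $D_G$ is a class function since it is a symmetric function of the eigenvalue multiset, and then simply read off both densities from Eqs.~\eqref{deform-ev} and~\eqref{deform-sv} of Theorem~\ref{thm:deform}. The only difference is that you make explicit the intermediate appeal to Theorem~\ref{thm:pol-SEV} for the form of $\mathcal{T} f_G^{(K)}$, which the paper leaves implicit; your small remark that the normalization is ``fixed by~\eqref{cond-norm}'' is a slight misattribution (that condition only guarantees integrability, the constant itself comes from Theorem~\ref{thm:pol-SEV}), but this is harmless since Eq.~\eqref{normal} is stated only up to proportionality.
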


\begin{proof}
 Since $|\det\chi(g)|^2=\prod_{j=1}^n|\chi(z_j(g))|^2$ is $G$-adjoint-invariant by construction, we obtain Eq.~\eqref{normal} for the joint density for the eigenvalues because of Eq.~\eqref{deform-ev}. The joint density of the squared singular values follows from Eq.~\eqref{deform-sv}. The group integral over $K$ explicitly reads in the present case like the one in Eq.~\eqref{sv-norm-deform}, which concludes the proof.
\end{proof}

The construction of the deformed random matrix ensembles in Corollary~\ref{cor:deform} is far from being only academical. For example the elliptic Ginibre ensemble~\cite{Akemann:2001} falls into this class which was employed in the description  of three-dimensional QCD with chemical potential. Moreover, to demonstrate that this construction can also be made very explicit for some ensembles, let us state two examples.

\begin{examples}[$G$-adjoint-invariant deformations]\

 Let us choose a polynomial ensemble of derivative type with the function $\omega\in L^{1,n-1}_{[1,n]}(\mathbb{R}_+)$ which we want to deform. We only assume that it satisfies the integrability~\eqref{cond-norm} for the following two deformations. \\
 \begin{enumerate}[(a)]
 \item	The first deformation is $\chi(z)=e^{\alpha z/2}$ with $\alpha\in\mathbb{C}_*$. Then the density on $G$ is
 			\begin{equation}\label{g-def-1}
 			f_G(g)=\exp[{\rm Re}(\alpha \tr g)] f_G^{(n)}([\omega];g)
 			\end{equation}
 			and the joint density of the eigenvalues reads
 			\begin{equation}\label{ev-def-1}
 			\FEV(z)=\frac{C_{\rm sv}^{(n)}[\omega]\prod_{j=0}^{n-1}j!}{\pi^n}|\Delta_n(z)|^2\prod_{j=1}^n\left(\exp[{\rm Re}(\alpha z_j)]\omega(|z_j|^2)\right).
 			\end{equation}
 			For deriving the joint density of the squared singular values we employ the  Leutwyler-Smilga integral~\cite{LS-int,SW:2003},
 			\begin{equation}\label{LS-integral}
 			\int_K \exp[{\rm Re}(\alpha \tr \sqrt{a}k)]d^*k=\frac{\prod_{j=0}^{n-1}j!}{\alpha^{n(n-1)}}\frac{\det[(\alpha \sqrt{a_b})^{c-1}I_{c-1}(2\alpha\sqrt{a_b})]_{b,c=1,\ldots,n}}{\Delta_n(a)}
 			\end{equation}
 			with $I_\nu$ the modified Bessel functions of the first kind. Then we have
 			\begin{eqnarray}\label{sv-def-1}
 			\FSV(a)&=&\frac{C_{\rm sv}^{(n)}[\omega]\prod_{j=0}^{n-1}j!}{\alpha^{n(n-1)}}\det[(\alpha \sqrt{a_b})^{c-1}I_{c-1}(2\alpha\sqrt{a_b})]_{b,c=1,\ldots,n}\\
 			&&\times\det[(-a_k\partial_{a_k})^{j-1}\omega(a_k)]_{j,k=1,\hdots,n}.\nonumber
 			\end{eqnarray}
 			
 			This ensemble with $\omega(a)=a^{\nu/2}K_{\nu/2}(a)$ the modified Bessel function of the second kind, $\nu>0$, was recently considered in Refs.~\cite{AS-coupled-1,AS-coupled-2} where the singular values of a product of two coupled Gaussian distributed rectangular matrices were studied. For a product of more than two matrices drawn from Gaussian ensembles this coupling does not work due to loss of integrability. However when those matrices are drawn from Jacobi ensembles, see Eq.~\eqref{Jac-def}, the integration domain is compact and thus no integrability issues arise.

 			To understand what the deformation does with the spectrum let us sketch the limit $\alpha\to\infty$ with  $\alpha>0$.  For the Laguerre ensemble $\omega_{\rm Lag}(a)=a^\nu e^{-a}$ with $\nu > 0$, we can shift the term $\alpha$ away in the real parts of the eigenvalues $z$ which suppresses the level repulsion from the origin. This level repulsion is reflected in the term $a^\nu$ in the weight $\omega$ and carries over to the complex eigenvalues as $|\det z|^{2\nu}$. It is after the shift in $\alpha$ equal to $|\det(z+\alpha\eins_n)|^2=\prod_{j=1}^n((x_j+\alpha)^2+y_j^2)^\nu\overset{\alpha\gg1}{\approx}\alpha^{2\nu n}$. Note that the Vandermonde determinant is translation invariant which allows this shift.
 			
 			In contrast to the Gaussian case, one can also consider the deformation of the Jacobi ensemble $\omega_{\rm Jac}(a)=a^\nu (1-a)^{\mu-1}\Theta(1-a)$. Then, we expand about the contributing extremum $z^{(0)}=\eins_N$ as follows $z_j=(1-\delta r_j/\alpha)\exp[\imath\delta\varphi_j/\sqrt{\alpha}]$ yielding a decoupling of the spectrum into a Gaussian unitary ensemble whose eigenvalues are described by $\delta\varphi_j$ and the radial perturbations $\delta r_j$ become statistically independently, identically distributed random variables drawn from Gamma distributions. This behaviour can be expected for all bi-unitarily invariant ensembles with a compact support. The deformation shifts the spectrum to the utmost point with the largest real part. Since the boundary is of one dimension lower than the interior of the support we have a splitting of scales of the spectra into the radial and the angular part with a concentration on the boundary.  For the Cauchy-Lorentz ensemble~\eqref{CL-def} as well as for many other ensembles this deformation is not eligible due to integrability.
 			
 			As a conclusion, the deformation $\chi(z)=e^{\alpha z/2}$ can result in very different effects, ranging from suppressions of repulsions to elliptic deformations similar to the one of the elliptic Ginibre ensemble~\cite{Akemann:2001}.
 			
 \item	
 The second deformation we want to consider is $\chi(z)=(\alpha-z)^{\gamma/2}$ with $\gamma\in\mathbb{N}$ and $\alpha\in\mathbb{C}$. Again the density on $G$,
 			\begin{equation}\label{g-def-2}
 			f_G(g)=|\det(\alpha\eins_n-g)|^\gamma f_G^{(n)}([\omega];g)
 			\end{equation}
 			and on $Z$
 			\begin{equation}\label{ev-def-2}
 			\FEV(z)=\frac{C_{\rm sv}^{(n)}[\omega]\prod_{j=0}^{n-1}j!}{\pi^n}|\Delta_n(z)|^2\prod_{j=1}^n\left(|\alpha-z_j|^\gamma\omega(|z_j|^2)\right)
 			\end{equation}
 			are immediately given.
 			
 			For the singular value density we have to evaluate the group integral
 			\begin{equation}\label{group-int-def}
 			 J(a):=\int_K |\det(\alpha\eins_n- \sqrt{a}k)|^\gamma d^*k=\int_K (\det[|\alpha|^2\eins_n- ak])^\gamma (\det[\eins_n-k^*])^\gamma d^*k.
 			\end{equation}
 			The second equality is true because of the following calculation
 			\begin{eqnarray}
 			J(a)&=&|\alpha|^{2\gamma}\int_{K^3} \det(k_1k_2)^{-\gamma} \exp\left[\tr(k_1+k_2)-\frac{1}{\alpha}\tr k_1\sqrt{a}k-\frac{1}{\alpha^*}\tr k_2k^*\sqrt{a}\right]\nonumber\\
 			&&\times d^*k_2d^*k_1d^*k\nonumber\\
 			&=&|\alpha|^{2\gamma}\int_{K^3} \det(k_1k_2)^{-\gamma} \exp\left[\tr(k_1+k_2)-\frac{1}{|\alpha|^2}\tr k_1ak-\tr k_2k^*\right]\nonumber\\
 			&&\times d^*k_2d^*k_1d^*k.\label{pol-def-calc-a}
 			\end{eqnarray}
 			Thereby we have used in the first equality the identity $(\det h)^\gamma=\int_K (\det k')^{-\gamma}$ $\exp[\tr hk']d^*k'$ for any matrix $h\in G$. Note that all three integrals over the unitary group $K$ are compact, so that we can interchange the integrals as we want. The second equality follows from the invariance of the Leutwyler-Smilga integral~\eqref{LS-integral} which is in the present case the integral over $k$.  The Leutwyler-Smilga integral only depends on the product $k_1ak_2/|\alpha|^2$, cf. Ref.~\cite{SW:2003}. Hence we can rewrite the expression into the second line of Eq.~\eqref{pol-def-calc-a}.
The integration over $k_1$ and $k_2$ yields the right hand side of Eq.~\eqref{group-int-def}.
 			
 			In the next step we diagonalize $k={k'}^* \Phi k'$ with $\Phi=\diag(e^{\imath\varphi_1},\ldots,e^{\imath\varphi_n})\in[{\rm U}(1)]^n$ a diagonal matrix of phases  and $k'\in {\rm U}(n)/[{\rm U}(1)]^n$. This yields a change of the measure as $d^* k=(\prod_{j=1}^n 2\pi^j/j!) |\Delta_n(\Phi)|^2 d^*\Phi d^*k'$ with $d^*\Phi$ as in Eq.~\eqref{I-Z-alt}. The integral over $k'$ is the Itzykson-Zuber-Harish-Chandra-like group integral~\cite{group-int-1,group-int-2}
 			\begin{eqnarray}\label{IZHC-like}
 			&&\int_{{\rm U}(n)/[{\rm U}(1)]^n} (\det[|\alpha|^2\eins_n- a{k'}^*\Phi k'])^\gamma dk'\\
 			&=&\left(\prod_{j=0}^{n-1}\frac{(-1)^j j!(\gamma+j)!}{(\gamma+n-1)!|\alpha|^{2j}}\right)\frac{\det[(|\alpha|^2- a_b e^{\imath\varphi_c})^{\gamma+n-1}]_{b,c=1,\ldots,n}}{\Delta_n(a)\Delta_n(\Phi)}.\nonumber
 			\end{eqnarray}
 			Applying the Andr\'eief identity~\cite{Andreief} the integral~\eqref{group-int-def} simplifies to
 			\begin{eqnarray}
 			J(a)&=&\left(\prod_{j=0}^{n-1}\frac{(-\pi)^j (\gamma+j)!}{(\gamma+n-1)!|\alpha|^{2j}}\right)\label{pol-def-calc-b}\\
 			&&\times\frac{\det[\int_{-\pi}^\pi (|\alpha|^2- a_b e^{\imath\varphi})^{\gamma+n-1}(1-e^{-\imath\varphi})^\gamma e^{-\imath(c-1)\varphi}d\varphi]_{b,c=1,\ldots,n}}{\Delta_n(a)}\nonumber\\
 			&=&\left(\prod_{j=0}^{n-1}\frac{2\pi^{j+1} (\gamma+j)!}{(\gamma+n-1)!}\right)\frac{1}{\Delta_n(a)}\nonumber\\
 			&&\times\det\left[\sum_{l=0}^\gamma\frac{\gamma!(\gamma+n-1)!}{l!(\gamma-l)!(n-c+l)!(\gamma+c-l-1)!}|\alpha|^{2l}a_b^{\gamma-l+c-1}\right]_{b,c=1,\ldots,n}.\nonumber
 			\end{eqnarray}
 			Hence we arrive at the singular value density
 			\begin{eqnarray}\label{sv-def-2}
 			\FSV(a)&=&C_{\rm sv}^{(n)}[\omega]\left(\prod_{j=0}^{n-1}\frac{2\pi^{j+1}}{j!}\right)\det[(-a_k\partial_{a_k})^{j-1}\omega(a_k)]_{j,k=1,\hdots,n}\\
 			&&\times\det\left[a_b^{\gamma+c-1}\,_{2}F_1\left(-\gamma,1-\gamma-c;n-c+1\biggl|\frac{|\alpha|^2}{a_b}\right)\right]_{b,c=1,\ldots,n}\nonumber
 			\end{eqnarray}
 			with $\,_{2}F_1$ the ordinary hypergeometric function.
 			
 			We  expect that the model~\eqref{g-def-2} as well as  joint densities~\eqref{ev-def-2} and \eqref{sv-def-2} can be analytically continued to real $\gamma>-1$ because the hypergeometric function is also defined for real indices. We will neither discuss nor derive this claim and let it stand as a conjecture.
 			
 			Another point we want to underline is that this kind of deformation allows to open a hole in the complex spectrum 
 			 for a variety of polynomial ensembles of derivative type, especially of Meijer G-ensembles, as the parameter $\gamma$ tends to infinity. Thus this ensembles creates phase transitions which can now be analyzed simultaneously at the level of eigenvalues and at the level of singular values with the help of our approach.
 \end{enumerate}
\end{examples}

Both examples above give rise to determinantal point process in their singular value as well as eigenvalue statistics. This can be readily seen by their explicit expressions~\eqref{ev-def-1}, \eqref{sv-def-1}, \eqref{ev-def-2}, and \eqref{sv-def-2} and general calculations~\cite{Borodin,KG-det,Akemann-book} for ensembles built out of bi-orthogonal functions. 
We recall that determinantal point processes are generally only an algebraic statement which is based on the fact that the two determinants involved in the joint densities are multi-linear and skew-symmetric. We keep it by these statements and will not go into the details of the statistics for these two particular examples since it will exceed the present discussion.

Nonetheless, to underline that our approach also yields new insights into the direct relation between the eigenvalue and singular value statistics, we will study the kernels of  bi-unitarily invariant matrix ensembles without a deformation in Section~\ref{sec:Implications}. In doing so we restrict ourselves to polynomial ensembles of derivative type.


\section{Implications for the Kernels}
\label{sec:Implications}

Let us consider a polynomial ensemble $\FSV^{(n)}[\omega]\in L^{1,{\rm SV}}(A)$  of derivative type, see Eq.~\eqref{eq:PE-der}. We assume that it results from a bi-unitarily invariant matrix ensemble on $G$ via the maps $f_G=\mathcal{I}_\Omega^{-1}\mathcal{I}_A^{-1}\FSV^{(n)}[\omega]\in L^{1,K}(G)$, cf. Eq.~\eqref{FG-PE} for an explicit representation of $f_G$. Moreover we assume that the function $\omega\in L^{1,n-1}_{]s_{\min},s_{\max}[}(\mathbb{R}_+)$ is positive and that its Mellin transform $\mathcal{M}\omega$ is analytic on a slightly larger strip ${]}s_{\min},s_{\max}{[}+\imath\mathbb{R}\supset [1,n]+\imath\mathbb{R}$ than originally required, i.e. $s_{\min}<1$ and $s_{\max}>n$. The latter additional assumption resolves some technical problems when choosing the contours for particular representations of the kernels. However we would expect that it can be dropped by choosing other contours than those we employ. 

The first assumption about the positivity of $\omega$ is also only technical and not really of relevance. First we want to define the kernel of the eigenvalue statistics in the standard way which is symmetric and involves the square root of $\omega$; see Eq.~\eqref{ker-ev} below. When defining the kernel non-symmetrically we can avoid this square root and, hence, are allowed to drop this assumption. Second the positivity is also used to exclude poles of the function $1/\mathcal{M}\omega$ on the interval $[1,n]$, so~that we have a single contour encircling poles coming from specific Gamma functions. Also here one can think of relaxing the requirement to the case that $\mathcal{M}\omega$ is non-vanishing at the points $s=1,\ldots,n$ which is a direct consequence of the fact that $\FEV^{(n)}[\omega]$ as well as $\FSV^{(n)}[\omega]$ are densities and thus normalizable. Then the contours involved in the calculation have to encircle the desired poles close enough.

We structure this section into three parts. 
First we summarize the results for the eigenvalues in Lemma~\ref{lem:pol-ev}. Analogously we summarize the results for the squared singular values in Lemma~\ref{lem:pol-sv}. Finally we relate both statistics in Theorem~\ref{thm:rel-pol-SEV}, which {constitutes the main result of this section.

\begin{lemma}[Eigenvalue Kernel]\label{lem:pol-ev}\

Consider the normalized joint density
				\begin{equation}\label{pol-der-ev}
				\FEV^{(n)}([\omega];z)=C_{\rm ev}^{(n)}[\omega]|\Delta_n(z)|^2\prod_{j=1}^n \omega(|z_j|^2).
				\end{equation}	
				  of the eigenvalues $z\in Z$ of a bi-unitarily invariant matrix ensemble corresponding to a polynomial ensemble of derivative type with the positive function $\omega\in L^{1,n-1}_{]s_{\min},s_{\max}[}(\mathbb{R}_+)$ and $[1,n]\subset {]s_{\min},s_{\max}[}\subset\mathbb{R}$. Then the normalization constant is
				 \begin{equation}\label{const-ev}
				 C_{\rm ev}^{(n)}[\omega]=\frac{1}{n!\pi^n}\,\frac{1}{\prod_{j=1}^{n}\mathcal{M}\omega(j)}
				 \end{equation}
				 and $\FEV$ gives rise to a determinantal point process, i.e.
\begin{equation}\label{deter-ev}
 \FEV^{(n)}([\omega];z)=\frac{1}{n!}\det\Big[K_{\rm ev}^{(n)}([\omega];z_b,\bar{z}_c)\Big]_{b,c=1,\ldots,n},
\end{equation}
with the kernel
\begin{equation}\label{ker-ev}
K_{\rm ev}^{(n)}([\omega];z_b,\bar{z}_c)=\sqrt{\omega(|z_b|^2)\omega(|z_c|^2)}\sum_{j=0}^{n-1}\frac{(z_b\bar{z}_c)^j}{\pi\mathcal{M}\omega(j+1)}.
\end{equation}
				Hence the corresponding orthogonal polynomials are the monomials $z\mapsto z^j$ with the normalization constants
				\begin{equation}\label{orth-ev}
				\int_{\mathbb{C}} z^i\bar{z}^j\omega(|z|^2)dz=\pi\mathcal{M}\omega(j+1)\delta_{ij}.
				\end{equation}
				The $k$-point correlation function, $k=1,\ldots,n$, is given by
\begin{equation}\label{k-point-ev}
 R^{(n,k)}_{\rm ev}([\omega];z)=\det[K_{\rm ev}^{(n)}([\omega];z_b,\bar{z}_c)]_{b,c=1,\ldots,k} \,.
\end{equation}
In particular, the normalized level density is
\begin{equation}\label{level-ev}
 \varrho^{(n)}_{\rm ev}([\omega];z)=\frac{1}{n} K_{\rm ev}^{(n)}([\omega];z,\bar{z})]=\frac{\omega(|z|^2)}{n}\sum_{j=0}^{n-1}\frac{|z|^{2j}}{\pi\mathcal{M}\omega(j+1)}.
\end{equation}
\end{lemma}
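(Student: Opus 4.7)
The plan is to derive the lemma from two ingredients: the orthogonality of the monomials $z\mapsto z^j$ with respect to the weight $\omega(|z|^2)\,dz$, and Andr\'eief's identity \cite{Andreief} applied to $|\Delta_n(z)|^2$. I~would proceed in three stages: first orthogonality, then normalization, and finally the determinantal structure.

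For the first stage, I would compute $\int_\mycmplx z^i\bar z^j\omega(|z|^2)\,dz$ in polar coordinates $z = re^{\imath\theta}$, $dz = r\,dr\,d\theta$. The angular integral collapses to $2\pi\delta_{ij}$, and after the substitution $u = r^2$ the radial integral becomes $\tfrac{1}{2}\mathcal{M}\omega(j+1)$ via the definition \eqref{M-def}; this proves \eqref{orth-ev}. The standing assumptions that $\omega > 0$ and $\mathcal{M}\omega$ is analytic on a strip containing $[1,n]$ ensure that $\mathcal{M}\omega(j) > 0$ for $j = 1,\ldots,n$, making the denominators in \eqref{ker-ev} and \eqref{const-ev} legitimate and the square root in the symmetric kernel well defined.

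For the normalization, I would factor $|\Delta_n(z)|^2 = \det[z_a^{b-1}]_{a,b}\,\overline{\det[z_a^{b-1}]_{a,b}}$ and apply Andr\'eief's identity, which turns the full integral of $|\Delta_n(z)|^2\prod_j\omega(|z_j|^2)$ over $\mycmplx^n$ into $n!$ times the determinant of the Gram matrix from the previous stage. That Gram matrix is diagonal by \eqref{orth-ev}, so its determinant equals $\pi^n\prod_{j=1}^n\mathcal{M}\omega(j)$, and imposing $\int\FEV^{(n)} = 1$ yields \eqref{const-ev}.

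Finally, for the determinantal point process structure, I would follow the textbook orthogonal-polynomial-ensemble argument. Introducing the orthonormal system $\phi_j(z) := z^j\sqrt{\omega(|z|^2)}/\sqrt{\pi\mathcal{M}\omega(j+1)}$ for $j=0,\ldots,n-1$, I~would absorb the square root of the weight into each of the two Vandermonde factors and combine the resulting determinants via $|\det[\phi_{j-1}(z_a)]|^2 = \det\bigl[\sum_{j=0}^{n-1}\phi_j(z_a)\overline{\phi_j(z_b)}\bigr]_{a,b}$. This delivers \eqref{deter-ev} with the kernel \eqref{ker-ev}. The $k$-point correlation function \eqref{k-point-ev} and the level density \eqref{level-ev} then follow from the standard integration-out theorem for determinantal point processes \cite{Borodin,KG-det,Akemann-book}, whose sole input is the reproducing property $\int K_{\rm ev}^{(n)}(z,\bar w)\,K_{\rm ev}^{(n)}(w,\bar z')\,dw = K_{\rm ev}^{(n)}(z,\bar z')$ --- itself immediate from \eqref{orth-ev}. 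There is no substantive obstacle: the whole lemma is Andr\'eief plus a polar-coordinate evaluation of Mellin moments.
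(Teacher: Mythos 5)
Your proof is correct and follows essentially the same route as the paper's: polar coordinates to establish the monomial orthogonality \eqref{orth-ev} (the paper phrases the normalization step as "integrate out phases, get a permanent, expand by symmetry," which is the same computation as your Andr\'eief application), then the standard factorization of $|\Delta_n(z)|^2\prod_j\omega(|z_j|^2)$ into a product of two determinants to obtain the kernel, with the $k$-point formula delegated to the general reproducing-kernel machinery. No gaps.
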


Note that we normalize the $k$-point correlation functions via the recursion
\begin{equation}\label{norm-k-point-ev}
 \int_{\mathbb{C}} R_{\rm ev}^{(n,k)}([\omega];z_1,\ldots,z_{k-1},z_k) dz_k=(n-k+1)R_{\rm ev}^{(n,k-1)}([\omega];z_1,\ldots,z_{k-1}).
\end{equation}
Moreover we recover the joint probability density $\FEV^{(n)}([\omega];z)=R_{\rm ev}^{(n,n)}([\omega];z)/n!$ for $k=n$.

\begin{proof}
 The normalization constant can be calculated in a straightforward way,
 \begin{eqnarray}\label{ev-calc-a}
 \frac{1}{C_{\rm ev}^{(n)}[\omega]}&=&\int_Z |\Delta_n(z)|^2\left(\prod_{j=1}^n \omega(|z_j|^2)\right)dz\\
 &=&\pi^n \int_A {\rm Perm}[a_b^{c-1}]_{b,c=1,\ldots,n}\left(\prod_{j=1}^n \omega(a_j)\right)da.
 \end{eqnarray}
 In the second line we have integrated over the phases of the eigenvalues $z$, see Eq.~\eqref{eq:VandermondeIntegral}. Expanding this permanent yields $n!$ times the same term for symmetry reasons,
 while the remaining integral factorizes. The integral over each single $a_j$ is a Mellin transform of $\omega$, which proves Eq.~\eqref{const-ev}.
 
 The statement that the monomials are the orthogonal polynomials is obvious since apart from the Vandermonde determinants the joint density has no phase dependence. In particular the orthogonality~\eqref{orth-ev} follows from the integration over the phase while the integration over the radius is the Mellin transformation. Analogously one can show
 \begin{eqnarray}
 \int_{\mathbb{C}} K_{\rm ev}^{(n)}([\omega];z_1,\bar{z}_2)K_{\rm ev}^{(n)}([\omega];z_2,\bar{z}_3)dz_2&=&K_{\rm ev}^{(n)}([\omega];z_1,\bar{z}_3)\ {\rm and}\nonumber\\
  \int_{\mathbb{C}}K_{\rm ev}^{(n)}([\omega];z,\bar{z})dz&=&n\label{ev-calc-b} \,,
 \end{eqnarray}
which shows that the kernel is the correct one for the $k$-point correlation function~\eqref{k-point-ev}.
 The only thing to be checked is the determinantal point process property~\eqref{deter-ev} 
 which follows from general discussions~\cite{CZ:1998,Borodin}, especially one can readily rewrite
 \begin{eqnarray}\label{ev-calc-c}
 \FEV^{(n)}([\omega];z)=\frac{1}{n!}\det\biggl[\frac{z_b^{c-1}\sqrt{\omega(|z_b|^2)}}{\pi\mathcal{M}\omega(c+1)}\biggl]_{b,c=1,\ldots,n}\det[\bar{z}_b^{c-1}\sqrt{\omega(|z_b|^2)}]_{b,c=1,\ldots,n},
 \end{eqnarray}
 where we pushed parts of the density into the two Vandermonde determinants. 
 The product rule for determinants, i.e. $\det BC=\det B \det C$ for two square matrices $B$ and $C$, 
 yields the claim~\eqref{deter-ev}.
\end{proof}

Let us emphasize once again the following two things. In the proof above we did not use the assumption
that the Mellin transformation of $\omega$ exists on a slightly larger interval than $[1,n]$.
We only need this requirement in the following lemma and Theorem~\ref{thm:rel-pol-SEV}. 
Also, the positivity of $\omega$ can be dropped when choosing the kernel 
$\omega(|z_b|^2)\sum_{j=0}^{n-1}(z_b\bar{z}_c)^j/(\pi\mathcal{M}\omega(j+1))$ 
instead which still satisfies Eq.~\eqref{ev-calc-b}.

\begin{lemma}[Singular Value Statistics]\label{lem:pol-sv}\

Consider the normalized joint density
\begin{equation}\label{pol-der-sv}
 \FSV^{(n)}([\omega];a)=C_{\rm sv}^{(n)}[\omega]\Delta_n(a) \, \det[(-a_k\partial_{a_k})^{j-1}\omega(a_k)]_{j,k=1,\hdots,n}
 \end{equation}
 of the squared singular values $a\in A$ corresponding to the joint density of Lemma~\ref{lem:pol-ev} via $\FSV^{(n)}[\omega]=\mathcal{R}^{-1}\FEV^{(n)}[\omega]$. The normalization constant is equal to
				 \begin{equation}\label{const-sv}
				 C_{\rm sv}^{(n)}[\omega]=\frac{1}{\prod_{j=1}^{n}j!}\,\frac{1}{\prod_{j=1}^{n}\mathcal{M}\omega(j)}
				 \end{equation}
				 The joint density~\eqref{pol-der-sv} gives rise to a determinantal point process
\begin{equation}\label{deter-sv}
 \FSV^{(n)}([\omega];a)=\frac{1}{n!}\det[K_{\rm sv}^{(n)}([\omega];a_b,a_c)]_{b,c=1,\ldots,n},
\end{equation}
with the kernel
\begin{equation}\label{ker-sv-a}
K_{\rm sv}^{(n)}([\omega];a_b,a_c)=\sum_{j=0}^{n-1}p_j(a_b)q_j(a_c).
\end{equation}
Here we employed the following polynomials in monic normalization
\begin{eqnarray}\label{pol-sv}
p_{l}([\omega];a)&=&\sum_{j=0}^l(-1)^{l-j}\frac{l!\mathcal{M}\omega(l+1)}{j!(l-j)!\mathcal{M}\omega(j+1)}a^j\\
&=&l! \mathcal{M}\omega(l+1)\oint_{\mathcal{C}}\frac{\Gamma(t-l-1)}{\Gamma(t)\mathcal{M}\omega(t)}a^{t-1}\frac{dt}{2\pi\imath},\nonumber
\end{eqnarray}
 $l=0,\ldots,n-1$, where the closed contour $ \mathcal{C}$ encircles the interval $[1,n]$ close enough such that $1/\mathcal{M}\omega(t+1)$ has no poles in the interior of the contour
and $s_{\min} < \re t < s_{\max}$ for all $t \in \mathcal{C}$.
 The functions bi-orthogonal to these polynomials are 
\begin{eqnarray}\label{func-sv}
q_{l}([\omega];a)&=&\frac{1}{l!\mathcal{M}\omega(l+1)}\left[\prod_{j=1}^l\left(-a\partial_a-j\right)\right]\omega(a)\\
&=&\frac{1}{l!\mathcal{M}\omega(l+1)}\partial_a^l\left[(-a)^l\omega(a)\right]\nonumber\\
&=&\frac{1}{l!\mathcal{M}\omega(l+1)}\lim_{\epsilon\to0}\int_{-\infty}^\infty\frac{\pi^2\cos(\epsilon  s)}{\pi^2-4\epsilon^2 s^2}\frac{\Gamma(s_0+\imath s)\mathcal{M}\omega(s_0+\imath s)}{\Gamma(s_0+\imath s-l)}
a^{-s_0-\imath s}\frac{ds}{2\pi}\nonumber
\end{eqnarray}
with $s_0\in {]s_{\min},1[}$ and $s_0>0$ chosen such that $s_0<{\rm Re}\, t$ for all $t\in\mathcal{C}$. For $l=0$ we omit the product of the derivatives in the first line of Eq.~\eqref{func-sv}. The two sets of functions satisfy the bi-orthogonality relation
				\begin{equation}\label{orth-sv}
				\int_{0}^\infty  p_i([\omega];a)q_j([\omega];a)da=\delta_{ij}.
				\end{equation}
				The $k$-point correlation function and the level density are
\begin{equation}\label{k-point-sv}
 R^{(n,k)}_{\rm sv}([\omega];a)=\det[K_{\rm sv}^{(n)}([\omega];a_b,a_c)]_{b,c=1,\ldots,k}
\end{equation}
and
\begin{equation}\label{level-sv}
 \varrho^{(n)}_{\rm sv}([\omega];a)=\frac{1}{n} K_{\rm sv}^{(n)}([\omega];a,a)]=\frac{1}{n}\sum_{j=0}^{n-1}p_j([\omega];a_b)q_j([\omega];a_c),
\end{equation}
respectively.

In the case that  $\omega\in L^{1,n}_{]s_{\min},s_{\max}[}(\mathbb{R}_+)$ and $s_{\max}>n+1$ we have the alternative representation
\begin{equation}\label{ker-sv-b}
K_{\rm sv}^{(n)}([\omega];a_b,a_c)=-n\frac{\mathcal{M}\omega(n+1)}{\mathcal{M}\omega(n)}\int_0^1p_{n-1}([\omega];xa_b)q_n([\omega];xa_c)dx.
\end{equation}
\end{lemma}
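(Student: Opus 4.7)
First I would deduce $C_{\rm sv}^{(n)}[\omega]$ by identifying the two available expressions for the joint eigenvalue density. Theorem~\ref{thm:pol-SEV} gives $\FEV^{(n)}([\omega];z)=(C_{\rm sv}^{(n)}[\omega]\prod_{j=0}^{n-1}j!/\pi^n)|\Delta_n(z)|^2\prod_j\omega(|z_j|^2)$, which must agree with the normalized form of Lemma~\ref{lem:pol-ev} involving $C_{\rm ev}^{(n)}[\omega]=1/(n!\pi^n\prod_{j=1}^n\mathcal{M}\omega(j))$. The identity $n!\prod_{j=0}^{n-1}j!=\prod_{j=1}^n j!$ then produces~\eqref{const-sv} at once. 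As a cross-check, a direct calculation via Andr\'eief's identity together with $\int_0^\infty a^{c-1}(-a\partial_a)^{j-1}\omega(a)\,da=c^{j-1}\mathcal{M}\omega(c)$ (cf.~Eq.~\eqref{eq:mellin-link}) and $\Delta_n(1,\ldots,n)=\prod_{j=0}^{n-1}j!$ reproduces the same value.

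\textbf{Stage 2 (Determinantal structure and bi-orthogonality).} Next I would observe that $\Delta_n(a)=\det[p_{j-1}(a_k)]_{j,k}$ for any monic family of polynomials of degrees $0,\ldots,n-1$, in particular for the $p_l$ defined in~\eqref{pol-sv}. From the defining formula for $q_l$, the transition matrix from the basis $\{(-a\partial_a)^{j-1}\omega\}_{j=1}^n$ to $\{q_{j-1}\}_{j=1}^n$ is upper triangular with diagonal entries $1/((j-1)!\mathcal{M}\omega(j))$, $j=1,\ldots,n$. Combined with the Cauchy--Binet identity
\[
\det[p_{j-1}(a_k)]_{j,k}\,\det[q_{j-1}(a_k)]_{j,k}=\det\bigl[\textstyle\sum_{j=0}^{n-1}p_j(a_b)q_j(a_c)\bigr]_{b,c}
\]
and the value of $C_{\rm sv}^{(n)}[\omega]$ from Stage~1, this yields the determinantal form~\eqref{deter-sv} with kernel~\eqref{ker-sv-a}. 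The heart of the argument is the bi-orthogonality~\eqref{orth-sv}: repeated application of $\mathcal{M}[(-a\partial_a-j)f](s)=(s-j)\mathcal{M}f(s)$ gives $\mathcal{M}q_l(s)=(s-1)(s-2)\cdots(s-l)\mathcal{M}\omega(s)/(l!\mathcal{M}\omega(l+1))$, which vanishes at $s\in\{1,\ldots,l\}$ and equals $1$ at $s=l+1$. Since $p_l$ is an explicit polynomial of degree $l$, $\int_0^\infty p_l(a)q_k(a)\,da$ is a finite linear combination of the values $\mathcal{M}q_k(j+1)$ for $j=0,\ldots,l$. The cases $l<k$ and $l=k$ are immediate from this vanishing/normalization pattern, and the case $l>k$ reduces to the elementary identity $\sum_{i=0}^{l-k}(-1)^i\binom{l-k}{i}=0$.

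\textbf{Stage 3 (Equivalent forms and $k$-point correlations).} For the contour representation of $p_l$ in~\eqref{pol-sv} I would apply the residue theorem: the poles of $\Gamma(t-l-1)$ inside $\mathcal{C}$ lie at $t=1,\ldots,l+1$ with residues $(-1)^{l+1-t}/\Gamma(l+2-t)$, and their sum reproduces the monomial expansion of $p_l$ after the factor $\mathcal{M}\omega(t)$ cancels against $\mathcal{M}\omega(j+1)$ at $t=j+1$. The derivative form $q_l=\partial_a^l[(-a)^l\omega]/(l!\mathcal{M}\omega(l+1))$ follows by induction on $l$ from the operator identity $\partial_a[(-a)f(a)]=(-a\partial_a-1)f(a)$, while the Mellin inversion form of $q_l$ is Lemma~\ref{lem:Mel-Inv} applied to $\mathcal{M}q_l$; the hypothesis $[1,n]\subset\,]s_{\min},s_{\max}[$ ensures the required analyticity of $\mathcal{M}\omega$ on $\mathcal{C}$ and on the line $\re s=s_0$. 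The $k$-point correlation formula~\eqref{k-point-sv} and level density~\eqref{level-sv} then follow from the standard theory: the reproducing property $\int K_{\rm sv}^{(n)}(x,y)K_{\rm sv}^{(n)}(y,z)\,dy=K_{\rm sv}^{(n)}(x,z)$ and the normalization $\int K_{\rm sv}^{(n)}(x,x)\,dx=n$ (both immediate from~\eqref{orth-sv}) enable the integration in~\eqref{norm-k-point-ev} by minor expansion, yielding~\eqref{k-point-sv} inductively down from $k=n$.

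\textbf{Stage 4 (Alternative representation~\eqref{ker-sv-b}).} This step is the one I expect to be the main obstacle, since it is a Christoffel--Darboux-like identity rather than a direct bi-orthogonality argument. My plan is to expand $p_{n-1}(xa_b)$ in monomials and substitute the Mellin inversion of $q_n$, which is legitimate under the strengthened hypothesis $\omega\in L^{1,n}_{]s_{\min},s_{\max}[}$ with $s_{\max}>n+1$. Interchanging the $x$- and $s$-integrations, the inner integral $\int_0^1 x^{j-s}\,dx=(j+1-s)^{-1}$ produces a simple pole that exactly cancels the factor $(s-j-1)$ present in $\mathcal{M}q_n(s)=\prod_{i=1}^n(s-i)\mathcal{M}\omega(s)/(n!\mathcal{M}\omega(n+1))$ for each $j\in\{0,\ldots,n-1\}$. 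Using the operator identification $\mathcal{M}^{-1}[(s-i)\mathcal{M}f](a)=(-a\partial_a-i)f(a)$, the remaining inverse Mellin transform becomes $\prod_{i=j+2}^n(-a_c\partial_{a_c}-i)\cdot j!\mathcal{M}\omega(j+1)q_j(a_c)$. Matching the coefficient of $a_b^k$ on both sides of~\eqref{ker-sv-b} then reduces the identity to the classical binomial summation $\sum_{m=0}^{M}(-1)^m\binom{x}{m}=(-1)^M\binom{x-1}{M}$ applied with $M=n-1-k$ and $x=-a_c\partial_{a_c}-(k+1)$ treated as a formal scalar in the corresponding operator polynomial in $-a_c\partial_{a_c}$. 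The principal difficulty is the careful bookkeeping of contour positions, signs, and operator-calculus identifications throughout this chain of manipulations.
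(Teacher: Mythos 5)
Your proposal is essentially correct, and in several places it takes a cleaner route than the paper. Some concrete differences are worth noting.

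For the bi-orthogonality \eqref{orth-sv}, you argue directly from $\mathcal{M}q_l(s)=\prod_{i=1}^l(s-i)\,\mathcal{M}\omega(s)/(l!\,\mathcal{M}\omega(l+1))$, observing that it vanishes at $s=1,\dots,l$ and equals $1$ at $s=l+1$, and then reducing the case $l>k$ to $\sum_i(-1)^i\binom{l-k}{i}=0$. The paper instead substitutes the contour-integral form of $p_i$ into $\int p_i q_j$, interchanges integrals, and evaluates $\oint\Gamma(t-i-1)/\Gamma(t-j)\,dt$ by residues versus deformation to a large circle. Your Mellin-side vanishing argument is more conceptual and avoids the case analysis on the contour; the paper's is more self-contained about justifying the integral interchange.

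For the alternative kernel representation \eqref{ker-sv-b}, you go in the opposite direction to the paper. The paper starts from $K_{\rm sv}=\sum p_jq_j$, applies a Gamma-function summation identity
\begin{equation*}
\sum_{j=0}^{n-1}\frac{\Gamma(t-j-1)\,\Gamma(s_0+\imath s)}{\Gamma(t)\,\Gamma(s_0+\imath s-j)}
=\frac{1}{s_0+\imath s-t}\Bigl[\frac{\Gamma(t-n)\,\Gamma(s_0+\imath s)}{\Gamma(t)\,\Gamma(s_0+\imath s-n)}-1\Bigr]
\end{equation*}
(proved by induction, the $-1$ term annihilated under the closed $t$-contour), and then reconstitutes $\int_0^1\dots dx$ from $1/(s_0+\imath s-t)=-\int_0^1 x^{t-s_0-\imath s-1}dx$. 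You instead start from the right-hand side of \eqref{ker-sv-b}, expand $p_{n-1}(xa_b)$, use the $x$-integral to generate a simple pole cancelling a linear factor in $\mathcal{M}q_n$, reinterpret the survivor as an operator polynomial in $-a_c\partial_{a_c}$ acting on the lower-index $q_j$, and finally match coefficients of $a_b^j$, reducing the matching to $\sum_{m=0}^{M}(-1)^m\binom{x}{m}=(-1)^M\binom{x-1}{M}$. I have checked that this identity is exactly what one needs after dividing out $\prod_{i=1}^j(s-i)$ and substituting $u=s-j$, so your reduction is sound; it trades the induction for a standard binomial identity. Note that the sign produced by $\int_0^1 x^{j-s_0-\imath s}\,dx=-1/(s_0+\imath s-j-1)$ is essential for the cancellation of the overall prefactor $-n\mathcal{M}\omega(n+1)/\mathcal{M}\omega(n)$, and should be tracked carefully if you write this up.

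The derivation of $C_{\rm sv}^{(n)}[\omega]$ by identifying the two expressions for $\FEV^{(n)}$ is not the route the paper's proof takes but is perfectly legitimate, since Lemma~\ref{lem:pol-ev} and Theorem~\ref{thm:pol-SEV} are both already available. Your upper-triangular transition matrix argument for \eqref{deter-sv} matches the paper's span-based argument in substance. The one caveat is that, as in the paper, the interchange-of-integral steps in Stage~4 require the strengthened integrability hypothesis on $\omega$; your sketch signals this but a full write-up should state why $\omega\in L^{1,n}_{]s_{\min},s_{\max}[}$ with $s_{\max}>n+1$ suffices at each interchange, along the lines the paper does.
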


The normalization of the $k$-point correlation functions~\eqref{k-point-sv} is given similarly to Eq.~\eqref{norm-k-point-ev}. The calculation for the singular value statistics is a little bit more involved than that for the eigenvalues. However we will pursue the standard approaches used in the calculus of bi-orthogonal polynomials~\cite{CZ:1998,Borodin,KG-det}.

\begin{proof}
Considering the two determinants in the definition~\eqref{pol-der-sv} of the density $\FSV^{(n)}[\omega]$, 
we immediately recognize that
\begin{eqnarray}\label{sv-calc-a}
\underset{j=0,\ldots,n-1}{\rm span}\{a^{j}\}&=&\underset{j=0,\ldots,n-1}{\rm span}\{p_j([\omega];a)\} \quad {\rm and}\\ \underset{j=0,\ldots,n-1}{\rm span}\{(-a\partial_a)^{j}\omega(a)\}&=&\underset{j=0,\ldots,n-1}{\rm span}\{q_j([\omega];a)\}.\nonumber
\end{eqnarray}
Hence the polynomials $p_l$ and the functions $q_l$ as well as the determinantal form~\eqref{deter-sv} of the density $\FSV^{(n)}[\omega]$ can be constructed via linear combination of the rows in the two determinants and the factorization rule of the determinants, cf. the end of the proof of Lemma~\ref{lem:pol-ev}. Moreover the function $\omega$ is positive such that $\mathcal{M}\omega(s)>0$ for all $s\in {]s_{\min},s_{\max}[}$. Since $\mathcal{M}\omega(s)$ is holomorphic and, thus, continuous on the complex strip ${]s_{\min},s_{\max}[}+\imath\mathbb{R}\subset\mathbb{C}$, there is an open neighbourhood $U\subset\mathbb{C}$ such that
$[1,n]\subset U\subset{]s_{\min},s_{\max}[}+\imath\mathbb{R}$ and $|\mathcal{M}\omega(s)|>0$ for all $s\in U$, i.e. $1/\mathcal{M}\omega$ is holomorphic on $U$, too.
Choosing a  contour $\mathcal{C}\subset U$ encircling the real interval $[1,n]$ we obtain via the residue theorem the contour integral~\eqref{pol-sv} for the polynomials. Moreover, similarly as in \eqref{eq:mellin-link}, we~have
\begin{align}
\label{eq:mellin-link-2}
\mellin([\partial_a^l ((-a)^l \omega(a))];t) = \frac{\Gamma(t)\mathcal{M}\omega(t)}{\Gamma(t-l)} \,.
\end{align}
Thus, the contour integral representation \eqref{func-sv} for the function $q_l$ follows 
from the Mellin inversion formula~\eqref{eq:MellinInversion}.

The biorthogonality can be seen by the straightforward calculation
\begin{eqnarray}
\int_{0}^\infty  p_i([\omega];a)q_j([\omega];a)da&=&  i! \mathcal{M}\omega(i+1)\oint_{\mathcal{C}}\frac{\Gamma(t-i-1)}{\Gamma(t)\mathcal{M}\omega(t)}\left(\int_{0}^\infty a^{t-1}q_j([\omega];a)da\right)\frac{dt}{2\pi\imath}\nonumber\\
&=&i! \mathcal{M}\omega(i+1)\oint_{\mathcal{C}}\frac{\Gamma(t-i-1)}{\Gamma(t)\mathcal{M}\omega(t)}\mathcal{M}q_j([\omega];t)\frac{dt}{2\pi\imath}\nonumber\\
&=&\frac{i! \mathcal{M}\omega(i+1)}{j! \mathcal{M}\omega(j+1)}\oint_{\mathcal{C}}\frac{\Gamma(t-i-1)}{\Gamma(t-j)}\frac{dt}{2\pi\imath}.\label{sv-calc-b}
\end{eqnarray}
In the first line we may interchange the integrals over $t$ and $a$ because the functions are Lebesgue integrable. In particular $|\Gamma(t-i-1)/(\Gamma(t)\mathcal{M}\omega(t))|$ is bounded from above on the contour $\mathcal{C}$ and $a^{{\rm Re}\, t}$ is bounded from above by the function $a^{t_{\max}}+a^{t_{\min}}$ with $t_{\max},t_{\min} \in {]s_{\min},s_{\max}[}$ the maximal and minimal real part of the contour $\mathcal{C}$, respectively. Since $\omega\in L^{1,n-1}_{]s_{\min},s_{\max}[}(\mathbb{R}_+)$ and thus also $(a\partial)^j\omega(a)\in L^{1}_{]s_{\min},s_{\max}[}(\mathbb{R}_+)$ for all $j=0,\ldots,n-1$ we have $q_j\in L^{1}_{]s_{\min},s_{\max}[}(\mathbb{R}_+)$. Therefore the Mellin trans\-form of $q_j$, $j=0,\ldots,n-1$, is well-defined on the complex strip ${]s_{\min},s_{\max}[}+\imath\mathbb{R}\subset\mathbb{C}$. The third line of Eq.~\eqref{sv-calc-b}
follows from Eq.~\eqref{eq:mellin-link-2}.

The remaining contour integral over $t$ vanishes if $j>i$ because the integrand does not have any pole. On the other hand for $j<i$ the integrand drops of at least like $t^{-2}$ at infinity. Hence, when deforming the contour to a circle and taking the radius of this circle to infinity, the integral becomes zero as well. For $j=i$ we obtain the normalization of the biorthogonal functions, i.e.\@ unity.

The bi-orthogonal structure immediately implies an identity similar to Eq.~\eqref{ev-calc-b}.
The claims about the determinantal point process, the $k$-point correlation function and the level density 
are direct consequences of this.

Regarding the alternative representation~\eqref{ker-sv-b} we follow the ideas in Ref.~\cite{KZ:2014}. Let us consider the identity
\begin{equation}\label{sum-identity}
\sum_{j=0}^{n-1}\frac{\Gamma[t-j-1]\Gamma[s_0+\imath s]}{\Gamma[t]\Gamma[s_0+\imath s-j]}=\frac{1}{s_0+\imath s-t}\left[\frac{\Gamma[t-n]\Gamma[s_0+\imath s]}{\Gamma[t]\Gamma[s_0+\imath s-n]}-1\right]
\end{equation}
which can be readily proven by induction. Expressing the polynomials $p_j[\omega]$ and the functions $q_j[\omega]$ in the sum~\eqref{ker-sv-a} by their integral representations we can interchange the sum and the integrals because the sum is finite.
Also, in the integral representation \eqref{func-sv} for $q_j[\omega]$,
we may replace $s$ with $s - \imath s_0$ in the regularizing function $\pi^2\cos(\epsilon s)/(\pi^2-4\epsilon^2s^2)$; 
compare the proof of Lemma \ref{lem:Mel-Inv}.
After that, we~can apply the identity~\eqref{sum-identity} leading to
\begin{eqnarray}
K_{\rm sv}^{(n)}([\omega];a_b,a_c)&=&\lim_{\epsilon\to0}\int_{-\infty}^\infty\frac{\pi^2\cos(\epsilon (s-\imath s_0))}{\pi^2-4\epsilon^2 (s-\imath s_0)^2}\Big(\oint_{\mathcal{C}}\left[\frac{\Gamma[t-n]\Gamma[s_0+\imath s]}{\Gamma[t]\Gamma[s_0+\imath s-n]}-1\right]\nonumber\\
&&\times\frac{\mathcal{M}\omega(s_0+\imath s)}{\mathcal{M}\omega(t)}\frac{a_b^{t-1}a_c^{-s_0-\imath s}}{s_0+\imath s-t}\frac{dt}{2\pi\imath}\Big)\frac{ds}{2\pi}.\label{sv-calc-c}
\end{eqnarray}

Due to the choice of the contours the second term on the right hand side of Eq.~\eqref{sum-identity} vanishes under the $t$-integral because the contour encircles a region where the integrand is holomorphic. The fraction $1/(s_0+\imath s-t)$ can be rewritten as
\begin{eqnarray}
\frac{1}{s_0+\imath s-t}=-\int_0^1 x^{t-s_0-\imath s-1} dx\label{sv-calc-d}
\end{eqnarray}
because $s_0<{\rm Re}\, t$. 
The compact integrals over $x$ and $t$ can be interchanged without any problems, 
so that we can identify the polynomial $p_{n-1}([\omega],xa_b)$. 
Moreover, recalling Eq.~\eqref{eq:mellin-link-2},
even the integrals over $s$ and $x$ can be interchanged 
(but we have to keep the limit $\epsilon$ in front of all integrals) 
because the integrand is Lebesgue integrable over $s$ and $x$.
Due to the regularizing function depending on $\epsilon$ 
we obtain another compact integral over $a'$, 
see the proof of Lemma~\ref{lem:Mel-Inv},
\begin{eqnarray}
K_{\rm sv}^{(n)}([\omega];a_b,a_c)&=&\lim_{\epsilon\to0}\int_0^1 \Big(\int_{-1}^{1}\cos\left(\frac{\pi y'}{2}\right)q_{n}([\omega];xa_ce^{\epsilon y'})\frac{\pi dy'}{4}\Big)\nonumber\\
&&\times p_{n-1}([\omega];xa_b)dx,\label{sv-calc-e}
\end{eqnarray}
where we already identified the function $q_n[\omega]$. For $\epsilon$ small enough the integrand becomes bounded in $y' \in [-1,1]$ and $x \in [0,1]$. This allows us to pull the limit $\epsilon\to0$ into the integrals, which concludes the proof.
\end{proof}

Note that the singular value as well as eigenvalue statistics depend only on the ratio $\omega(a)/\mathcal{M}\omega(j)$ for $a\in A$ and $j=1,\ldots,n$. This property builds the bridge between the eigenvalue and the singular value statistics stated in Theorem~\ref{thm:rel-pol-SEV}. But before coming to this theorem let us emphasize that there is an even simpler contour integral expression for the polynomials~\eqref{pol-sv} in the case that there is an $r_0\in\mathbb{R}_+$ such that the Laurent series
\begin{equation}\label{series-def}
 Q([\omega];z):=\sum_{j=-\infty}^{\infty}\frac{(r_0z)^j}{\mathcal{M}\omega(j+1)}
\end{equation}
 has a radius of convergence of $1+\epsilon$ with $\epsilon>0$. For this series we do not need that $\mathcal{M}\omega(j+1)<\infty$ for all $j\in\mathbb{Z}$ because we interpret $1/\mathcal{M}\omega(j+1)=0$ when $\mathcal{M}\omega(j+1)=\infty$.

\begin{corollary}[Simplified Formula for the Polynomials]\label{cor:simpl-pol}\

 Assuming that there is an $r_0\in\mathbb{R}_+$ such that the function~\eqref{series-def} is holomorphic in an open neighborhood of the unit circle in $\mathbb{C}$. Then the polynomials~\eqref{pol-sv} simplify to
 \begin{equation}\label{simpl-pol}
  p_l([\omega];a)= \frac{\int_{-\pi}^\pi (a e^{\imath\varphi}-r_0)^l Q([\omega];e^{-\imath\varphi})d\varphi}{\int_{-\pi}^\pi e^{\imath l\varphi} Q([\omega];e^{-\imath\varphi})d\varphi},\ l=0,\ldots,n-1.
 \end{equation}
\end{corollary}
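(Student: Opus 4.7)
The plan is to reduce the claimed contour-integral expression to the explicit polynomial representation~\eqref{pol-sv} of $p_l([\omega];a)$ by a direct expansion and term-by-term integration. The key observation is that, once the binomial theorem is applied to $(ae^{\imath\varphi}-r_0)^l$ and the Laurent series defining $Q([\omega];e^{-\imath\varphi})$ is inserted, all remaining $\varphi$-integrals collapse by orthogonality of the characters $e^{\imath m\varphi}$ on $[-\pi,\pi]$.

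First, I would expand via the binomial theorem
\begin{equation}
(ae^{\imath\varphi}-r_0)^l = \sum_{j=0}^l \binom{l}{j} a^j (-r_0)^{l-j} e^{\imath j\varphi}
\end{equation}
and substitute it into the numerator of \eqref{simpl-pol}. The hypothesis that $Q([\omega];z)$ is holomorphic on an open neighborhood of the unit circle implies that its Laurent series converges uniformly on some annulus $1-\delta<|z|<1+\delta$, so in particular the series
\begin{equation}
Q([\omega];e^{-\imath\varphi}) = \sum_{k=-\infty}^{\infty}\frac{r_0^k \, e^{-\imath k\varphi}}{\mathcal{M}\omega(k+1)}
\end{equation}
converges uniformly in $\varphi\in[-\pi,\pi]$. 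Uniform convergence justifies interchanging this sum with the integration over $\varphi$ in both the numerator and the denominator.

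Next I would use the orthogonality relation $\int_{-\pi}^{\pi}e^{\imath(j-k)\varphi}\,d\varphi = 2\pi\,\delta_{jk}$. In the denominator this instantly gives $2\pi r_0^l/\mathcal{M}\omega(l+1)$. In the numerator, for each fixed $j\in\{0,\ldots,l\}$ only the term $k=j$ survives, producing the factor $2\pi r_0^j/\mathcal{M}\omega(j+1)$. Collecting contributions, the numerator equals
\begin{equation}
2\pi\sum_{j=0}^l \binom{l}{j} (-1)^{l-j} \frac{r_0^l \, a^j}{\mathcal{M}\omega(j+1)}.
\end{equation}
Dividing by the denominator cancels the common factor $2\pi r_0^l$ and introduces the factor $\mathcal{M}\omega(l+1)$, yielding precisely the coefficient $(-1)^{l-j}\binom{l}{j}\mathcal{M}\omega(l+1)/\mathcal{M}\omega(j+1)$ of $a^j$ in \eqref{pol-sv}.

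The only technical point requiring real care is the justification of the interchange of summation and integration; this is the place where the assumption that $Q([\omega];z)$ is holomorphic in a neighborhood of $|z|=1$, rather than merely absolutely convergent on the unit circle, enters. One should also note that the assumption that $1/\mathcal{M}\omega(j+1)$ is interpreted as $0$ whenever $\mathcal{M}\omega(j+1)=\infty$ is compatible with both sides of \eqref{simpl-pol}, so no issue arises from possibly missing terms in the Laurent series. Everything else is routine arithmetic with binomial coefficients.
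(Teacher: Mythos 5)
Your proof is correct and uses the same ingredients as the paper's: the Fourier-coefficient identity $\int_{-\pi}^\pi Q([\omega];e^{-\imath\varphi})e^{\imath j\varphi}\,d\varphi/(2\pi) = r_0^j/\mathcal{M}\omega(j+1)$, the binomial theorem, and the monic normalization determining the denominator. You simply run the computation in reverse---expanding the right-hand side of \eqref{simpl-pol} and reducing it to the explicit sum in \eqref{pol-sv}, whereas the paper substitutes the Fourier identity into \eqref{pol-sv} and resums---but the underlying algebra is identical.
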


\begin{proof}
 We note that
 \begin{equation}\label{simpl-calc-a}
 \int_{-\pi}^\pi Q([\omega];e^{-\imath\varphi}) e^{\imath j\varphi} \frac{d\varphi}{2\pi}=\frac{r_0^j}{\mathcal{M}\omega(j+1)}
 \end{equation}
since $Q[\omega]$ is holomorphic in an open neighborhood of the unit circle. This identity can be plugged into the first line of Eq.~\eqref{pol-sv}. Then we have
 \begin{equation}\label{simpl-calc-b}
 p_l([\omega];a)= \sum_{j=0}^l(-1)^{l-j}\frac{l!\mathcal{M}\omega(l+1)}{j!(l-j)!}\left(\frac{a}{r_0}\right)^j\int_{-\pi}^\pi Q([\omega];e^{-\imath\varphi}) e^{\imath j\varphi} \frac{d\varphi}{2\pi}.
 \end{equation}
 Because the sum is finite we can interchange it with the compact contour integral and recognize a binomial sum. This yields the integral in the numerator of Eq.~\eqref{simpl-pol}. Due to the monic normalization we can fix the normalization which yields the integral in the denominator.
\end{proof}

\begin{remark}[Classical Orthogonal Polynomials]\

 The identity~\eqref{simpl-pol} makes contact to many known expressions of classical orthogonal polynomials. For example for the Ginibre ensemble~\eqref{Gin-def} with $\nu\in\mathbb{N}$ we have
 \begin{equation}\label{Lag-pol}
 Q([\omega_{\rm Lag}],z)=\sum_{j=-\infty}^\infty\frac{(r_0 z)^{j}}{\Gamma(\nu+j+1)}=\frac{e^{r_0z}}{(r_0z)^{\nu}},
 \end{equation}
 yielding with Eq.~\eqref{simpl-pol} the Laguerre polynomials. For the Jacobi polynomials~\eqref{Jac-def} the function~\eqref{series-def} reduces to
 \begin{equation}\label{Jac-pol}
 Q([\omega_{\rm Jac}],z)=\sum_{j=-\infty}^\infty\frac{\Gamma(\mu+\nu+j+1)(r_0 z)^{j}}{\Gamma(\mu)\Gamma(\nu+j+1)}=\frac{\mu}{(1-r_0z)^{\mu+1}(r_0z)^{\nu}},
 \end{equation}
 corresponding to the shifted Jacobi polynomials, and for the Cauchy-Laguerre ensemble it is
 \begin{equation}\label{CL-pol}
 Q([\omega_{\rm CL}],z)=\sum_{j=-\infty}^\infty\frac{\Gamma(\nu+\mu+1)(r_0 z)^{j}}{\Gamma(\mu-j)\Gamma(\nu+j+1)}=(\nu+\mu)\frac{(1+r_0z)^{\mu+\nu-1}}{(r_0z)^{\nu}},
 \end{equation}
both only for $\nu\in\mathbb{N}$. Indeed those relations can be found for other Meijer G-ensembles, too. Those simplifications were also found for certain ensembles with the help of the supersymmetry method~\cite{VKG-SUSY,Kieburg:2015}.

Moreover one can also consider the Muttalib-Borodin ensemble~\eqref{MB-def} or its limits~\eqref{ln-def} and \eqref{e-def}. Also for those ensembles the Laurent series~\eqref{series-def} exist. However we will omit them here and proceed with the discussion.
\end{remark}

The proof of Corollary~\ref{cor:simpl-pol} already outlines the main idea we are pursuing to derive the relations between the singular value and eigenvalue statistics. The signi\-ficant difference between Corollary~\ref{cor:simpl-pol} and the following theorem is that we do not have to assume that the Laurent series~\eqref{series-def} exists. This theorem relates the kernels and bi-orthogonal function of the eigenvalues and squared singular values. Thus  any statistical quantity for the singular values can be expressed in terms of the kernel~\eqref{ker-ev} describing the eigenvalue statistics.

\begin{theorem}[Relation between the Kernels]\label{thm:rel-pol-SEV}\

 We consider the same ensemble as in the Lemmas~\ref{lem:pol-ev} and \ref{lem:pol-sv} where $\omega$ is, additionally, $n$-times continuous differentiable and $[1,n+1]\subset {]s_{\min},s_{\max}[}$. Then, 
 \begin{enumerate}[(a)]
 \item	the polynomials are
 			\begin{equation}\label{pol-rel}
 			 p_l([\omega];a)=\int_0^\infty\left(\int_{-\pi}^\pi (ae^{\imath\varphi}-r^2)^lK_{\rm ev}^{(n)}([\omega];r,re^{-\imath\varphi})d\varphi\right)rdr,
 			\end{equation}
 			$l=0,\ldots,n-1$;
 \item	the functions bi-orthogonal to these polynomials are
 			\begin{equation}\label{func-rel}
 			 q_l([\omega];a)=\frac{1}{2l!}\left(-\partial_a\right)^l\left(\int_{-\pi}^\pi e^{\imath l \varphi}K_{\rm ev}^{(n)}([\omega];\sqrt{a},\sqrt{a}e^{-\imath\varphi})d\varphi\right),
 			\end{equation}
 			$l=0,\ldots,n$;
 \item	the kernel is
 			\begin{eqnarray}\label{ker-rel}
 			&&K_{\rm sv}^{(n)}([\omega];a_b,a_c)\\
 			&=&\frac{1}{2(n-1)!}\partial_{a_c}^n\left[\int_0^{a_c}\left(\int_{-\pi}^\pi K_{\rm ev}^{(n)}([\omega];\sqrt{x},\sqrt{x}e^{-\imath\varphi} )(a_c-a_be^{\imath\varphi})^{n-1}d\varphi\right)dx\right]\nonumber\\
 			&=&-\frac{1}{2(n-1)!}\partial_{a_c}^n\left[\int_{a_c}^\infty\left(\int_{-\pi}^\pi K_{\rm ev}^{(n)}([\omega];\sqrt{x},\sqrt{x}e^{-\imath\varphi} )(a_c-a_be^{\imath\varphi})^{n-1}d\varphi\right)dx\right].\nonumber
 			\end{eqnarray}
 \end{enumerate}
\end{theorem}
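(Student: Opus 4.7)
The plan is to evaluate each right-hand side by substituting the explicit representation~\eqref{ker-ev} of the eigenvalue kernel and to collapse the resulting sums via orthogonality of the complex exponentials, $\int_{-\pi}^{\pi} e^{\imath j\varphi}d\varphi = 2\pi\delta_{j,0}$. Setting $z_b = \sqrt{a}$ and $\bar z_c = \sqrt{a}\,e^{-\imath\varphi}$ (so that $z_b\bar z_c = ae^{-\imath\varphi}$ and $|z_b|^2 = |z_c|^2 = a$), the kernel reduces to the plain Fourier series
\begin{equation*}
K_{\rm ev}^{(n)}([\omega];\sqrt{a},\sqrt{a}\,e^{-\imath\varphi}) = \omega(a)\sum_{j=0}^{n-1}\frac{a^{j}\,e^{-\imath j\varphi}}{\pi\,\mathcal{M}\omega(j+1)},
\end{equation*}
whose coefficients involve only Mellin transforms of $\omega$ at positive integers; this is essentially the only input from Lemma~\ref{lem:pol-ev} needed in what follows.

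First I would prove (b): integrating the above against $e^{\imath l\varphi}$ selects the single term $j = l$ and produces $2a^l\omega(a)/\mathcal{M}\omega(l+1)$, so that the prefactor $(-\partial_a)^l/(2l!)$ reproduces the middle line of~\eqref{func-sv}. Part (a) is analogous: I would expand $(ae^{\imath\varphi}-r^2)^l$ by the binomial theorem, apply Fourier orthogonality to force the binomial index to coincide with $j$, then substitute $s = r^2$ to recognise the radial integral as $\tfrac{1}{2}\mathcal{M}\omega(l+1)$; the result matches~\eqref{pol-sv} term by term.

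The bulk of the work lies in (c). Substituting the kernel and expanding $(a_c - a_b e^{\imath\varphi})^{n-1}$ binomially, the $\varphi$-integration yields
\begin{equation*}
\int_{-\pi}^{\pi} K_{\rm ev}^{(n)}([\omega];\sqrt{x},\sqrt{x}\,e^{-\imath\varphi})(a_c - a_b e^{\imath\varphi})^{n-1}d\varphi = 2\omega(x)\sum_{k=0}^{n-1}\binom{n-1}{k}\frac{(-a_b)^k x^k a_c^{n-1-k}}{\mathcal{M}\omega(k+1)}.
\end{equation*}
After integrating over $x\in(0,a_c)$ and applying $\partial_{a_c}^n/(2(n-1)!)$, the resulting expression becomes a linear combination of the terms $\partial_{a_c}^n\bigl[a_c^{n-1-k}\int_0^{a_c}x^k\omega(x)dx\bigr]$ with coefficients $(-a_b)^k/\mathcal{M}\omega(k+1)$; matching these against $\sum_{l=0}^{n-1}p_l(a_b)q_l(a_c)$ expanded via~\eqref{pol-sv} and~\eqref{func-sv} reduces the claim to the combinatorial identity
\begin{equation*}
\frac{1}{(n-1-k)!}\,\partial_{a_c}^n\!\biggl[a_c^{n-1-k}\!\int_0^{a_c}\!\!x^k\omega(x)\,dx\biggr] = \sum_{m=0}^{n-1-k}\frac{1}{m!}\,\partial_{a_c}^{m+k}\bigl[a_c^{m+k}\omega(a_c)\bigr]
\end{equation*}
for each $k = 0,\ldots,n-1$, which I would prove by induction on $n-1-k$ using Leibniz's rule (both sides agree for $k = n-1$, and the recursion generated by one extra application of $\partial_{a_c}$ matches on either side). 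I expect this combinatorial identity to be the main technical obstacle; everything else is bookkeeping.

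The equivalence of the two integral representations in~\eqref{ker-rel} follows from the observation that their difference equals $\partial_{a_c}^n/(2(n-1)!)$ of the \emph{full} integral over $x\in(0,\infty)$; using $\int_0^\infty x^k\omega(x)dx = \mathcal{M}\omega(k+1)$, the Mellin factors cancel and the integral collapses to $2(a_c - a_b)^{n-1}$, a polynomial of degree $n-1$ in $a_c$ that is annihilated by $\partial_{a_c}^n$. Apart from the combinatorial identity, the only subtlety is justifying the interchange of $\partial_{a_c}^n$ with the $x$-integration up to the moving endpoint $x = a_c$, which is permitted under the smoothness hypothesis on $\omega$ assumed in the theorem.
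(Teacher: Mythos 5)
Your proposal is correct. Parts (a) and (b) are essentially the paper's own argument run in the reverse direction: the paper formalizes your two ``integrate the explicit kernel against $e^{\imath j\varphi}$'' observations as standalone identities and then substitutes them into the contour/derivative representations \eqref{pol-sv} and \eqref{func-sv}, while you expand the right-hand sides and collapse via Fourier orthogonality; the content is the same. For part (c), however, your route genuinely differs from the paper's. The paper starts from the Christoffel--Darboux--type representation \eqref{ker-sv-b} of $K_{\rm sv}^{(n)}$, which was already established in Lemma~\ref{lem:pol-sv}; inserting the contour form of $p_{n-1}$ and the derivative form of $q_n$, performing a binomial sum, and rescaling $x\to x/a_c$ then turns the integral over $[0,1]$ directly into the integral over $[0,a_c]$ of \eqref{ker-rel}. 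You instead start from the bare sum $K_{\rm sv}^{(n)}=\sum_l p_l q_l$, expand the candidate right-hand side, and reduce the matching of coefficients of $(-a_b)^k/(k!\,\mathcal{M}\omega(k+1))$ to the combinatorial identity
\begin{equation*}
\frac{1}{(n-1-k)!}\,\partial_{a_c}^n\!\left[a_c^{n-1-k}\!\int_0^{a_c}\!x^k\omega(x)\,dx\right]
 = \sum_{m=0}^{n-1-k}\frac{1}{m!}\,\partial_{a_c}^{m+k}\!\left[a_c^{m+k}\omega(a_c)\right],
\end{equation*}
which is indeed correct and follows by the induction you sketch (check $\ell := n-1-k = 0$, then differentiate once via Leibniz to step $\ell\mapsto\ell+1$). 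The trade-off is that the paper's route re-uses an existing lemma and produces the moving endpoint $a_c$ automatically through the substitution, whereas your route is self-contained (it bypasses \eqref{ker-sv-b} entirely) at the cost of establishing one extra derivative identity, and it requires the same Leibniz-type justification for pushing $\partial_{a_c}^n$ past the integral to the moving endpoint. Your argument for the second equality in \eqref{ker-rel} -- the full $x$-integral collapses to $2(a_c-a_b)^{n-1}$, a degree-$(n-1)$ polynomial killed by $\partial_{a_c}^n$ -- coincides with the paper's.
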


\begin{proof}
The proof is based on the following two identities
\begin{equation}\label{rel-pol-calc-a}
 \frac{\omega(a)}{\mathcal{M}\omega(j+1)}=\frac{1}{2}\int_{-\pi}^\pi\left(\frac{e^{\imath\varphi}}{a}\right)^jK_{\rm ev}^{(n)}([\omega];\sqrt{a},\sqrt{a}e^{-\imath\varphi})d\varphi,
\end{equation}
for any $a\in\mathbb{R}_+$ and  $j=0,\ldots,n-1$, and
\begin{equation}\label{rel-pol-calc-b}
 \frac{\mathcal{M}\omega(s)}{\mathcal{M}\omega(j+1)}=\int_0^\infty\left(\int_{-\pi}^\pi K_{\rm ev}^{(n)}([\omega];r,re^{-\imath\varphi})e^{\imath j\varphi}d\varphi\right)r^{2(s-j)}\frac{dr}{r},
\end{equation}
with $s\in{]s_{\min},s_{\max}[}+\imath\mathbb{R}$ and $j=0,\ldots,n-1$. The order of the integrations is important unless ${\rm Re}\,s>j$ and the Mellin transform also exists on the interval $]s_{\min},s_{\max}[$, namely then the integral is Lebesgue integrable. Both identities can be readily verified with the help of the explicit form of the kernel $K_{\rm ev}^{(n)}[\omega]$, see Eq.~\eqref{ker-ev}.

For the polynomials $p_l[\omega]$ we plug Eq.~\eqref{rel-pol-calc-b} into the first line of Eq.~\eqref{pol-sv} with $s=l+1$,
\begin{equation}\label{rel-pol-calc-c}
p_{l}([\omega];a)=\sum_{j=0}^l(-1)^{l-j}\frac{l!}{j!(l-j)!}a^j\int_0^\infty\left(\int_{-\pi}^\pi K_{\rm ev}^{(n)}([\omega];r,re^{-\imath\varphi})e^{\imath j\varphi}d\varphi\right)r^{2(l-j)+1}dr.
\end{equation}
 The finite sum in the index $j$ is a binomial sum and can be interchanged with the integrals yielding the desired result~\eqref{pol-rel}. In a similar way we plug Eq.~\eqref{rel-pol-calc-a} into the second line of Eq.~\eqref{func-sv} with $j=l$. Then we obtain the expression~\eqref{func-rel} for the functions $q_l[\omega]$.
 
We start from Eq.~\eqref{ker-sv-b} to derive the result for the relation between the kernels. To this end, we plug the first line of Eq.~\eqref{pol-sv} and the second line of Eq.~\eqref{func-sv} into Eq.~\eqref{ker-sv-b} and get
\begin{eqnarray}\label{rel-pol-calc-d}
K_{\rm sv}^{(n)}([\omega];a_b,a_c)=\int_0^1\sum_{j=0}^{n-1}\frac{(-1)^{n-j}(xa_b)^j}{j!(n-1-j)!}\frac{\partial_{a_c}^n\left[(-a_c)^n\omega(xa_c)\right]}{\mathcal{M}\omega(j+1)}dx.
\end{eqnarray}
The derivative can be pulled out of the integration because we integrate over a compact domain and the integrand is $n$-times continuous differentiable. Then we can apply the identity~\eqref{rel-pol-calc-a} for $a\to x a_c$ and we can perform the binomial sum. In the last step we rescale $x\to x/a_c$.

The second equality of Eq.~\eqref{ker-rel} is true because the integral
\begin{eqnarray}\label{rel-pol-calc-e}
&&\int_0^\infty\left(\int_{-\pi}^\pi K_{\rm ev}^{(n)}([\omega];\sqrt{x},\sqrt{x}e^{-\imath\varphi} )(a_c-a_be^{\imath\varphi})^{n-1}d\varphi\right)dx\\
&=&\int_0^\infty \omega(x)\sum_{j=0}^{n-1}\frac{2(n-1)!(-x a_b)^j a_c^{n-1-j}}{j!(n-1-j)!\mathcal{M}\omega(j+1)}dx\nonumber\\
&=&2(a_c-a_b)^{n-1}\nonumber
\end{eqnarray}
is finite and a polynomial of order $n-1$ in $a_c$. Thus the $n$th derivative in $a_c$ vanishes. This concludes the calculation.
\end{proof}

It is quite remarkable that all essential quantities for the singular value statistics are linearly dependent on the kernel of the eigenvalue statistics. Especially the integrals involved in the identities~\eqref{pol-rel} -- \eqref{ker-rel} can be interpreted as follows.  The integration over the angle $\varphi$ has to be expected. For example for normal matrices the singular values are nothing more than the projection of the eigenvalues onto the radial parts which is equal to an integration over the angles. A similar relation, although not completely the same due to the level repulsion for the singular values for bi-unitarily invariant matrices which might be missing for normal matrices, can be expected for the polynomial ensembles as well. The compact integration in the variable $x$ for the relation~\eqref{ker-rel} between the kernels can be also understood as a reminiscent property of Weyl's inequalities, cf. Eqs.~\eqref{inequalities-Weyl} and \eqref{inequalities-Horn}. On the one hand the integral over $x$ from $0$ to $a_c$ means that only the eigenvalues in the complex disk centered around the origin and of radius $a_c$ affect the singular values of value $a_c$. On the other hand the second identity in Eq.~\eqref{ker-rel} gives also the interpretation that only the eigenvalues outside this disk influence the statistics of a singular value at $a_c$. 
Despite this apparent conflict Eq.~\eqref{ker-rel} means that the eigenvalue statistics inside a certain disk and the ones outside this disk are intimately related. Note that this interpretation only applies for the ensembles considered in this section, namely polynomial ensembles of derivative type.

We are confident that the relations stated in Theorem~\ref{thm:rel-pol-SEV} will also carry over in one way or another to the relations between both kinds of statistics in the limit of large matrix dimensions. Certainly the Haagerup-Larson theorem~\cite{HL-theorem,HS:2007} restricted to positive polynomial ensembles of derivative type has to follow as well as the single ring theorem~\cite{SR-theorem-1,SR-theorem-2}. However we will not study this limit since it will exceed the aim of the present work.
 
 
\section{Conclusions and Outlook}
\label{sec:Conclusions}
 
We have discovered a remarkable relation between the joint densities of the eigenvalues and of the singular values of a bi-unitarily invariant matrix ensemble. Due to this relation we have not only shown that the non-compact integration over the unitriangular matrices in the Schur decomposition \eqref{schur-dec} is invertible for this kind of ensembles, but we have also derived an explicit map between the two kinds of spectral densities, see Theorem~\ref{thm:Map}. Thus, we have opened a novel approach which allows to directly relate statistical quantities of the singular values with those of the eigenvalues. We have illustrated this via a certain class of polynomial ensembles which we call {\it polynomial ensembles of derivative type}, see Definition~\ref{def:PE}.b) and Theorem~\ref{thm:rel-pol-SEV} for the explicit relations between the two spectral statistics. A certain subset of this class called Meijer G-ensembles was already encountered in the discussion of products of certain random matrices~\cite{ARRS:2013,AB:2012,AIK:2013,AKW:2013,ABKN,Forrester:2014,IK:2013,KS:2014,KZ:2014,KKS:2015}. Additionally the Muttalib-Borodin ensembles~\cite{Muttalib,Borodin,Forrester-Wang} of the Laguerre- and the Jacobi-type are also polynomial ensembles of derivative type. Therefore our result answers the question what is the corresponding joint density of eigenvalues of the matrix $ak$ when $a$ is a Muttalib-Borodin ensemble (or any other polynomial ensemble of derivative type) and $k$ an independent Haar-distributed unitary matrix.

Thus we are now able to address the long-standing problem of describing the relation between the distributions
of the eigenvalues and of the singular values. Previous solutions to this problem in form of the Haagerup-Larsen theorem~\cite{HL-theorem,HS:2007} and the single ring theorem~\cite{SR-theorem-1,SR-theorem-2} refer to the limit of large matrices via free probability. Our approach is exact at finite matrix dimensions. Hence the local spectral statistics of the singular values and the eigenvalues can be studied as well, via these exact relations we discovered. Our approach should even open the opportunity to study mixed statistics. 
For example one can ask for the joint density of $k_{\rm ev}$ eigenvalues and $k_{\rm sv}$ singular values.

We have also shown that for certain deformations of bi-unitarily invariant ensem\-bles, called $G$-adjoint-invariant deformations, see Definition~\ref{def:deform}, one can generalize the relation between the joint densities of the eigenvalues and of the singular values. This relation is not invertible in contrast to that for bi-unitarily invariant random matrix ensembles. Nonetheless we can explicitly answer the question what the joint density of the singular values is when the joint density of the eigenvalues has a certain form, see Theorem~\ref{thm:deform}. Hence one is now able to study random matrix ensembles defined by the joint density of their eigenvalues not only as normal matrix ensembles~\cite{CZ:1998,TBAZW:2005,BK:2012} but as deformations of bi-unitarily invariant matrix ensembles. Those ensembles have non-trivial singular value statistics due to the level repulsion of the singular values which is quite often absent for normal random matrix models.

Our approach is based on harmonic analysis on matrix spaces, especially on the spherical transform. 
As is well known, one of the key properties of this transform 
is its factorization property with respect to multiplicative convolutions
\cite{HarCha1,HarCha2,Helgason3,Terras,JL:SL2R}.
Indeed, in the recent work \cite{KK-2016}, we use this connection to investigate products of independent random matrices from polynomial ensembles.

One mathematical question is still open and has to be answered. 
Our approach for bi-unitarily invariant ensembles shows that 
a joint probability density and, hence, positive density on the ``singular values'' 
automatically carries over to a joint probability density on the ``eigenvalues'',
whereas the reverse direction may fail for matrix dimension $n > 1$.
Therefore,
$\mathcal{R}(L^{1,{\rm SV}}_{\text{prob}}(A))$ is in general a proper subset of $L^{1,{\rm EV}}_{\text{prob}}(Z)$, 
where $L^{1,{\rm SV}}_{\text{prob}}(A)$ and $L^{1,{\rm EV}}_{\text{prob}}(Z)$ are the sets of the joint probability densities of singular values and eigenvalues. This also implies that not every joint probability density of ``eigenvalues'' corresponds to a bi-unitarily invariant random matrix ensemble. Note, however, that this does not mean that there exist no other random matrix ensembles yielding these joint probability densities of eigenvalues.

{In particular, it is quite likely that a joint probability density for the ``eigenvalues'' 
may correspond to a signed density for the ``singular values''.
This observation is a bit surprising and unfortunate for a perfect correspondence for probability densities 
between the two kinds of spectral statistics. The question is: What is the image $\mathcal{R}(L^{1,{\rm SV}}_{\text{prob}}(A))$ of our map restricted to joint probability densities for the singular values of bi-unitarily random matrices? Or, in other words, what conditions do joint probability densities of the eigenvalues have to satisfy so that they correspond to a joint probability density for the singular values? This question seems to be quite tough and should be addressed in a forthcoming publication.

Another question which is quite intriguing is the generalization of our results to real and quaternion matrices. One might at least speculate that the (more complicated) analogue
of Eq.~\eqref{eq:gelfand-naimark} for the orthogonal and symplectic groups
might prove useful in studying the relation among the eigenvalue and singular value statistics 
beyond the complex case, which is still open.

\section*{Acknowledgements}

We want to thank Gernot Akemann, Friedrich G\"otze and Arno Kuijlaars for fruitful discussions on this topic. Moreover we acknowledge financial support by the {\it CRC 701: ``Spectral Structures and Topological Methods in Mathematics''} of the {\it  Deutsche Forschungsgemeinschaft}.

\medskip

\end{document}